\documentclass[a4paper]{amsart}
\usepackage{amssymb, enumitem}
\usepackage[all]{xy}
\usepackage{hyperref, aliascnt, graphicx}
\usepackage{mathtools}


\newtheorem{lma}{Lemma}[section]

\newaliascnt{thmCt}{lma}
\newtheorem{thm}[thmCt]{Theorem}
\aliascntresetthe{thmCt}

\newaliascnt{corCt}{lma}
\newtheorem{cor}[corCt]{Corollary}
\aliascntresetthe{corCt}

\newaliascnt{prpCt}{lma}
\newtheorem{prp}[prpCt]{Proposition}
\aliascntresetthe{prpCt}

\theoremstyle{definition}

\newaliascnt{pgrCt}{lma}
\newtheorem{pgr}[pgrCt]{}
\aliascntresetthe{pgrCt}

\newaliascnt{dfnCt}{lma}
\newtheorem{dfn}[dfnCt]{Definition}
\aliascntresetthe{dfnCt}

\newaliascnt{rmkCt}{lma}
\newtheorem{rmk}[rmkCt]{Remark}
\aliascntresetthe{rmkCt}

\newaliascnt{rmksCt}{lma}
\newtheorem{rmks}[rmksCt]{Remarks}
\aliascntresetthe{rmksCt}

\newaliascnt{exaCt}{lma}
\newtheorem{exa}[exaCt]{Example}
\aliascntresetthe{exaCt}

\newaliascnt{qstCt}{lma}
\newtheorem{qst}[qstCt]{Question}
\aliascntresetthe{qstCt}

\newaliascnt{pbmCt}{lma}
\newtheorem{pbm}[pbmCt]{Problem}
\aliascntresetthe{pbmCt}

\newaliascnt{ntnCt}{lma}
\newtheorem{ntn}[ntnCt]{Notation}
\aliascntresetthe{ntnCt}

\def\today{\number\day\space\ifcase\month\or   January\or February\or
   March\or April\or May\or June\or   July\or August\or September\or
   October\or November\or December\fi\   \number\year}

\newcommand{\ZZ}{{\mathbb{Z}}}
\newcommand{\NN}{{\mathbb{N}}}
\newcommand{\KK}{{\mathbb{K}}}
\newcommand{\CC}{{\mathbb{C}}}
\newcommand{\Prim}{{\mathrm{Prim}}}
\newcommand{\ca}{$C^*$-algebra}
\newcommand{\stHom}{$\ast$-homomorphism}
\newcommand{\andSep}{\,\,\,\text{ and }\,\,\,}
\newcommand{\axiomO}[1]{(O#1)}
\newcommand{\CatCu}{\ensuremath{\mathrm{Cu}}}
\newcommand{\CuSgp}{$\CatCu$-sem\-i\-group}
\newcommand{\CuMor}{$\CatCu$-mor\-phism}
\newcommand{\llideal}{\ensuremath{<\hskip-7pt\lhd\hskip3pt{}}}

\DeclareMathOperator{\linSpan}{span}
\DeclareMathOperator{\Cu}{Cu}
\DeclareMathOperator{\Lat}{Lat}

\newcounter{theoremintro}

\newtheorem{prpIntro}[theoremintro]{Proposition}
\newtheorem{pbmIntro}[theoremintro]{Problem}
\newtheorem{thmIntro}[theoremintro]{Theorem}

\title{The Global Glimm Property}
\date{\today}

\author{Hannes Thiel}
\address{Hannes Thiel,
Department of Mathematics, Kiel University, Heinrich-Hecht-Platz~6, 24118 Kiel, Germany.}
\email{hannes.thiel@math.uni-kiel.de}
\urladdr{www.hannesthiel.org}

\author{Eduard Vilalta}
\address{Eduard Vilalta,
Departament de Matem\`{a}tiques,
Universitat Aut\`{o}noma de Barcelona,
08193 Bellaterra, Barcelona, Spain}
\email{evilalta@mat.uab.cat}
\urladdr{www.eduardvilalta.com}

\thanks{The first named author was partially supported by the Deutsche Forschungsgemeinschaft (DFG, German Research Foundation) under Germany's Excellence Strategy EXC 2044-390685587 (Mathematics M\"{u}nster: Dynamics-Geometry-Structure) and by the ERC Consolidator Grant No.~681207.
The second named author was partially supported by MINECO (grant No.\ PRE2018-083419 and No.\ PID2020-113047GB-I00), and by the Comissionat per Universitats i Recerca de la Generalitat de Catalunya (grant No.\ 2017SGR01725).
}

\subjclass[2010]%
{Primary
46L05; 
Secondary
19K14, 
46L80, 
46L85. 
}
\keywords{$C^*$-algebras, scatteredness, Cuntz semigroups, divisibility}
\date{\today}

\begin{document}

\begin{abstract}
It is known that a \ca{} with the Global Glimm Property is nowhere scattered (it has no elementary ideal-quotients), and the Global Glimm Problem asks if the converse holds.
We provide a new approach to this long-standing problem by showing that a \ca{} has the Global Glimm Property if and only if it is nowhere scattered and its Cuntz semigroup is \emph{ideal-filtered} (the Cuntz classes generating a given ideal are downward directed) and has \emph{property (V)} (a weak form of being sup-semilattice ordered).

We show that ideal-filteredness and property~(V) are automatic for \ca{s} that have stable rank one or real rank zero, thereby recovering the solutions to the Global Glimm Problem in these cases.
We also use our approach to solve the Global Glimm Problem for new classes of \ca{s}.
\end{abstract}

\maketitle

\section{Introduction}

A classical result of Glimm shows that a \ca{} $A$ admits a nonzero \mbox{$\ast$-homo}\-morphism $C_0((0,1],M_k)\to A$ if and only if it has an irreducible representation of dimension at least $k$;
see \cite{Gli61Type1} and \cite[Proposition~3.10]{RobRor13Divisibility}.
It follows that all irreducible representations of $A$ have dimension at least $k$ if and only if there exists a family of *-homomorphisms $C_0((0,1],M_k)\to A$ whose combined images generate~$A$ as a closed, two-sided ideal.

Going one step further, one can show that no hereditary sub-\ca{} of $A$ admits a finite-dimensional irreducible representation if and only if for each \mbox{$a\in A_+$}, each natural number $k\geq 2$, and each $\varepsilon>0$, there exist finitely many $\ast$-ho\-mo\-mor\-phisms $\varphi_1,\ldots,\varphi_n\colon C_0((0,1],M_k)\to\overline{aAa}$ such that $(a-\varepsilon)_+$ belongs to the ideal of $A$ generated by the combined images of $\varphi_1,\ldots,\varphi_n$. Such algebras were termed \emph{nowhere scattered} in \cite{ThiVil21arX:NowhereScattered}.

If this can always be achieved with a single $\ast$-homomorphism (that is, $n=1$), then $A$ is said to have the \emph{Global Glimm Property}; 
see \autoref{dfn:GGP}.
This property was first introduced by Kirchberg and R{\o}rdam in their study of (weakly) purely infinite \ca{s}, \cite[Definition~4.12]{KirRor02InfNonSimpleCalgAbsOInfty}, and one can show that the algebras satisfying the Global Glimm Property have many interesting characteristics. 
For example, a unital \ca{} with this property always contains a full square-zero element, \autoref{prp:GlimmFirstChar}, and the existence of such elements has considerable implications on the structure of the unitary group of the \ca{}; 
see \cite{ChaRob21arX:AutoContGrUnit}.

Clearly, every \ca{} that has the Global Glimm Property is nowhere scattered. 
The \emph{Global Glimm Problem} asks if the converse holds. 
That is, it asks if every nowhere scattered \ca{} has the Global Glimm Property.

This problem was raised implicitly by Kirchberg and R{\o}rdam in \cite{KirRor02InfNonSimpleCalgAbsOInfty} (see Section~4, and the discussion after Question~9.5 in \cite{KirRor02InfNonSimpleCalgAbsOInfty}).
Indeed, a positive solution to the Global Glimm Problem would imply that every weakly purely infinite \ca{} is purely infinite, a long-standing question first posed in \cite{KirRor02InfNonSimpleCalgAbsOInfty}.
The Global Glimm Problem was also implicitly studied by Blanchard and Kirchberg in \cite{BlaKir04GlimmHalving, BlaKir04PureInf}, and mentioned explicitly by Elliott and R{\o}rdam in \cite[Question~1.2]{EllRor06Perturb};
see also \cite[Remark~5.7]{RobRor13Divisibility}.

The Global Glimm Problem has been solved affirmatively in a variety of situations: 
It was already noted in \cite[Lemma~4.14]{KirRor02InfNonSimpleCalgAbsOInfty} that the problem has a positive solution whenever the \ca{} under study is either simple, approximately divisible, or purely infinite. 
Later, positive solutions were obtained for \ca{s} with a Hausdorff, finite-dimensional primitive ideal space (\cite{BlaKir04GlimmHalving}), for \ca{s} of real rank zero (\cite{EllRor06Perturb}), and recently for \ca{s} of stable rank one (\cite{AntPerRobThi22CuntzSR1}).
 
In this paper, we study the problem by first characterizing the Global Glimm Property and nowhere scatteredness through divisibility properties (recalled in \autoref{pgr:2OmegaDef}) of the Cuntz semigroup of the \ca.
We show that a \ca{} has the Global Glimm Property if and only if its Cuntz semigroup is $(2,\omega)$-divisible;
see \autoref{prp:GlimmFirstChar}.
On the other hand, by \cite[Theorem~8.9]{ThiVil21arX:NowhereScattered}, a \ca{} is nowhere scattered if and only if its Cuntz semigroup is weakly $(2,\omega )$-divisible; 
see \autoref{pgr:2OmegaDef}. 
Thus, one can reformulate the Global Glimm Problem as: 
Is every weakly $(2,\omega)$-divisible Cuntz semigroup $(2,\omega)$-divisible?
(See \autoref{pbm:ReformulateDiv}.)

The Cuntz semigroup of a \ca{} is an object of the category $\CatCu$ of abstract Cuntz semigroups, also called \CuSgp{s}, introduced in \cite{CowEllIva08CuInv} and extensively studied in \cite{AntPerThi18TensorProdCu, AntPerThi20AbsBivariantCu, AntPerThi20AbsBivarII, AntPerThi20CuntzUltraproducts} and \cite{AntPerRobThi21Edwards}.
We are thus led to study conditions that characterize when a weakly $(2,\omega)$-divisible \CuSgp{} is $(2,\omega)$-divisible.

The main accomplishment of this paper is the identification of two conditions on a Cuntz semigroup that precisely capture when weak $(2,\omega)$-divisibility implies $(2,\omega)$-divisibility:
The first condition, termed \emph{ideal-filteredness}, roughly means that the Cuntz classes generating a given ideal are downward directed;
see \autoref{dfn:IdealFiltered} and \autoref{rmk:IdealFiltered}.
The second condition, termed \emph{property~(V)}, is a (very weak) form of being sup-semilattice ordered;
see \autoref{dfn:PrpV} and \autoref{rmk:PrpV}.

The main technical result of this paper is:

\begin{thmIntro}[\ref{prp:MainThmCu}]
A \CuSgp{} satisfying \axiomO{5}-\axiomO{8} is $(2,\omega)$-divisible if and only if it is weakly $(2,\omega)$-divisible, ideal-filtered, and has property~(V).
\end{thmIntro}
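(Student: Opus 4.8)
My plan is to prove the two implications separately, investing almost all of the effort in the ``if'' direction; the ``only if'' direction is comparatively routine. Throughout I write $I_t$ for the ideal generated by an element $t$, and I recall that the ideal $I_y$ equals $\{z : z\le\infty y\}$, where $\infty y$ denotes $\sup_m my$, and that $a\ll b\le c$ implies $a\ll c$. Under \axiomO{5}--\axiomO{8} one may freely pass between an element and the rapidly increasing sequences way-below it, and this is what powers all of the approximation bookkeeping below.

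For the ``if'' direction, fix $x'\ll x$; the goal is a single $y$ with $2y\le x$ and $x'\le ny$ for some $n\in\NN$. Interpolate $x'\ll x''\ll x$ by \axiomO{2} and apply weak $(2,\omega)$-divisibility to $x''\ll x$, obtaining $y_1,\dots,y_k$ with $2y_i\le x$ for all $i$ and $x''\le y_1+\cdots+y_k$. I claim the whole problem reduces to the following two-piece \emph{merge}: given $a,b$ with $2a\le x$ and $2b\le x$, there is $w$ with $2w\le x$ and $I_a,I_b\subseteq I_w$. Granting this, an induction on $k$ collapses $y_1,\dots,y_k$ into a single $y$ with $2y\le x$ and $I_{y_i}\subseteq I_y$ for every $i$. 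Since each $y_i\le\infty y$, we then get $x''\le\sum_i y_i\le\infty y$, so $x'\ll x''\le\infty y$ forces $x'\ll\infty y$, and hence $x'\le ny$ for some finite $n$ because $\infty y$ is the supremum of the increasing sequence $(my)_m$. This is exactly $(2,\omega)$-divisibility. The halving inequality for $w$ is what makes the reduction clean: the covering is recovered purely from the ideal containment together with the way-below slack in $x'\ll x''$.

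The merge is the crux, and it is precisely what property~(V) and ideal-filteredness are designed to supply; it is also where I expect the main obstacle to lie. The difficulty is that the two demands on $w$ pull in opposite directions: $I_w\supseteq I_a\vee I_b$ forces $w$ to be ``large'', while $2w\le x$ forces it to be ``small'', and a naive join fails, since $a\vee b\ge a,b$ would give $a+b\le 2(a\vee b)$, and $2(a\vee b)\le x$ is false in general. My plan is to first use property~(V) -- the weak sup-semilattice hypothesis -- to manufacture an element whose ideal dominates both $I_a$ and $I_b$, approximated from below through way-below versions $a'\ll a$ and $b'\ll b$; and then to use ideal-filteredness to replace this element by a \emph{generator of the same ideal lying below one of the divided pieces}, so that the bound $2w\le x$ is inherited while the generated ideal is preserved. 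Keeping the ideal from shrinking as one passes to way-below elements and then to the small generator is the delicate point, and is exactly what \axiomO{5}--\axiomO{8} are there to control.

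For the ``only if'' direction, $(2,\omega)$-divisibility yields weak $(2,\omega)$-divisibility at once: a single $y$ with $2y\le x$ and $x'\le ny$ gives the constant witnessing family $y_1=\cdots=y_n=y$. For ideal-filteredness and property~(V) one runs the divisibility in reverse: given generators of a common ideal (respectively elements with a common upper bound), dividing them produces the small common lower-generator (respectively the weak join) required by \autoref{dfn:IdealFiltered} and \autoref{dfn:PrpV}, again tracking ideals via \axiomO{5}--\axiomO{8}. These verifications are straightforward next to the merge, so the substance of the theorem lies in the reduction above together with the two-piece merge.
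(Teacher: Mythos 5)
Your ``only if'' direction and the final assembly of the ``if'' direction are fine in outline, but the argument has a genuine gap at exactly the point you flag as the crux: the two-piece merge is not proved, and the sketch you give of it cannot be carried out as described. Property~(V) is not a device for producing, from an arbitrary pair $a,b$ with $2a\le x$ and $2b\le x$, an element whose ideal dominates $I_a$ and $I_b$: its hypotheses require a very specific configuration $d_1,d_2\ll c$ with $c+d_1,c+d_2\ll x$ for a \emph{common} element $c$, and manufacturing that configuration from the raw weak-divisibility data is the actual technical heart of the theorem. In the paper this occupies \autoref{prp:WkDivFirstStep} and \autoref{prp:WkDivDownTo2}: one first uses \axiomO{5} to produce, for each piece $y_j$, a ``complement'' $c_j$ with $y_j'+c_j\le x\le 2c_j$, then uses ideal-filteredness to find a single $c$ below all the $c_j$ that still generates an ideal containing $x'$, and then needs \axiomO{8} (via the telescoping inequality $(m+1)(z_1+\ldots+z_n)\le y+\infty f$ and \cite[Proposition~7.8]{ThiVil21arX:NowhereScattered}) to force the pieces way below $c$. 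None of this is visible in your sketch, and the order of operations you propose (property~(V) first to ``manufacture'' the dominating element, ideal-filteredness second) inverts the one that works. Your plan would in effect require you to reprove \autoref{prp:WkDivDownTo2} inside each merge step.

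There is also a structural problem with the merge as you state it. Requiring the exact containments $I_a,I_b\subseteq I_w$ is stronger than Cuntz-semigroup techniques deliver --- every tool here (ideal-filteredness, property~(V), \axiomO{5}--\axiomO{8}) only yields conclusions about elements way-below the given data --- so the honest version of the merge captures only $I_{a'}$ and $I_{b'}$ for some $a'\ll a$, $b'\ll b$. With that correction your pairwise induction leaks: after merging $y_1,y_2$ into $w_1$ you must next merge $w_1$ with $y_3$, which only preserves $I_{w_1'}$ for some $w_1'\ll w_1$, and $I_{w_1'}$ need not contain $I_{y_1'}$ or $I_{y_2'}$. To repair this you would have to carry a fixed target $x''\ll x$ through the whole induction and restate the merge relative to it --- at which point you have essentially reconstructed the paper's strategy, which processes the entire family $y_1,\dots,y_n$ at once (reducing it to a pair $d_1,d_2$ together with the auxiliary $c$) before property~(V) ever enters, and only then applies ideal-filteredness one last time to the outputs $y,z$ of property~(V) to obtain $u\ll y,z$ with $2u\le y+z\le x$. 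So the reduction is salvageable in spirit, but as written the central lemma is both unproved and formulated in a way that breaks the induction.
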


Here, \axiomO{5}-\axiomO{8} are conditions on a \CuSgp{} that automatically hold for Cuntz semigroups of \ca{s}.
We thus obtain the main result of the paper:

\begin{thmIntro}[\ref{prp:CharGGP}]
Let $A$ be a \ca{}. 
Then the following are equivalent:
\begin{enumerate}
\item 
$A$ has the Global Glimm Property;
\item
$\Cu(A)$ is $(2,\omega)$-divisible;
\item 
$\Cu(A)$ is weakly $(2,\omega)$-divisible, ideal-filtered and has property~(V);
\item 
$A$ is nowhere scattered and $\Cu(A)$ is ideal-filtered and has property~(V).
\end{enumerate}
\end{thmIntro}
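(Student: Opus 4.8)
The plan is to deduce this equivalence as a formal assembly of the main technical result together with two ``dictionary'' statements that translate between properties of $A$ and properties of its Cuntz semigroup. I would organize the proof as three pairwise equivalences. First, $(1)\Leftrightarrow(2)$ is immediate: it is exactly the content of \autoref{prp:GlimmFirstChar}, which identifies the Global Glimm Property of $A$ with $(2,\omega)$-divisibility of $\Cu(A)$.

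Next, for $(2)\Leftrightarrow(3)$, the key point is that for any \ca{} $A$ the Cuntz semigroup $\Cu(A)$ is a \CuSgp{} that automatically satisfies the axioms \axiomO{5}-\axiomO{8}. Consequently, \autoref{prp:MainThmCu} applies verbatim to $S = \Cu(A)$ and yields the equivalence between $(2,\omega)$-divisibility of $\Cu(A)$ and the conjunction of weak $(2,\omega)$-divisibility, ideal-filteredness, and property~(V). I would simply verify that the axioms hold (this is the standard fact that Cuntz semigroups of \ca{s} satisfy \axiomO{5}-\axiomO{8}) and then quote \autoref{prp:MainThmCu}.

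Finally, for $(3)\Leftrightarrow(4)$, I would invoke \cite[Theorem~8.9]{ThiVil21arX:NowhereScattered}, by which $A$ is nowhere scattered if and only if $\Cu(A)$ is weakly $(2,\omega)$-divisible. Conditions $(3)$ and $(4)$ are literally identical except that $(3)$ requires $\Cu(A)$ to be weakly $(2,\omega)$-divisible while $(4)$ requires $A$ to be nowhere scattered; the cited characterization matches these up and closes the cycle of implications.

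I should stress that this theorem carries no independent difficulty: it is a direct corollary of \autoref{prp:MainThmCu}, with all the genuine work already carried out at the level of abstract Cuntz semigroups. The only items to confirm are that $\Cu(A)$ meets the hypotheses \axiomO{5}-\axiomO{8} of \autoref{prp:MainThmCu}, and that the two characterizations---of the Global Glimm Property via $(2,\omega)$-divisibility and of nowhere scatteredness via weak $(2,\omega)$-divisibility---are correctly aligned; both are supplied by \autoref{prp:GlimmFirstChar} and \cite{ThiVil21arX:NowhereScattered}, respectively. There is thus no real obstacle at this stage, the difficulty having been isolated into the semigroup-theoretic \autoref{prp:MainThmCu}.
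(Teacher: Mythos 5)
Your proposal is correct and follows exactly the paper's own argument: $(1)\Leftrightarrow(2)$ via \autoref{prp:GlimmFirstChar}, $(2)\Leftrightarrow(3)$ by applying \autoref{prp:MainThmCu} after noting that $\Cu(A)$ satisfies \axiomO{5}-\axiomO{8}, and $(3)\Leftrightarrow(4)$ via the characterization of nowhere scatteredness by weak $(2,\omega)$-divisibility from \cite[Theorem~8.9]{ThiVil21arX:NowhereScattered}. There is nothing to add.
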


This leads to the following reformulation of the Global Glimm Problem:

\begin{pbmIntro}[\ref{pbm:AutomaticIFV}]
Let $A$ be a nowhere scattered \ca.
Is $\Cu(A)$ ideal-filtered and has property~(V)?
\end{pbmIntro}

In \autoref{sec:IdealFiltered}, we introduce and study ideal-filteredness. 
We show that this condition is automatically satisfied for Cuntz semigroup of \ca{s} that have stable rank one  (\autoref{prp:IdealFilteredSR1}), or real rank zero (\autoref{prp:IdealFilteredRR0}), or are separable and have topological dimension zero (\autoref{prp:IdealFilteredTopDimZero}).

In \autoref{sec:V}, we introduce and study property~(V), and we show that this condition automatically holds for the Cuntz semigroups of \ca{s} that are residually stably finite (\autoref{prp:V-RSF}), have real rank zero (\autoref{prp:V-rr0}), or are separable, have topological dimension zero and strict comparison of positive elements (\autoref{prp:V-AlmUnp}).

In particular, Cuntz semigroups of \ca{s} of stable rank one or real rank zero are automatically ideal-filtered and have property~(V) (even if the \ca{} is not nowhere scattered), and we recover the known positive solutions of the Global Glimm Problem in these cases:

\begin{prpIntro}[\ref{prp:GGP-SR1}, \ref{prp:GGP-RR0}]
Let $A$ be a \ca{} of stable rank one or real rank zero.
Then $A$ has the Global Glimm Property if and only if $A$ is nowhere scattered.
\end{prpIntro}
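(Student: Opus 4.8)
The plan is to deduce both statements from the characterization of the Global Glimm Property in \autoref{prp:CharGGP}, reducing the claim to the fact that the two auxiliary conditions appearing there are automatic in the cases at hand. Concretely, the equivalence of~(1) and~(4) in \autoref{prp:CharGGP} says that a \ca{} $A$ has the Global Glimm Property if and only if $A$ is nowhere scattered and $\Cu(A)$ is ideal-filtered and has property~(V). Since every \ca{} with the Global Glimm Property is nowhere scattered, the only content is the converse, and for this it suffices to show that ideal-filteredness and property~(V) hold automatically whenever $A$ has stable rank one or real rank zero. Once this is established, \autoref{prp:CharGGP} immediately gives that nowhere scatteredness alone implies the Global Glimm Property.

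In the real rank zero case both ingredients are furnished directly by the results developed in \autoref{sec:IdealFiltered} and \autoref{sec:V}: ideal-filteredness of $\Cu(A)$ is \autoref{prp:IdealFilteredRR0}, and property~(V) is \autoref{prp:V-rr0}. No further argument is needed, and \autoref{prp:CharGGP} applies directly.

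In the stable rank one case, ideal-filteredness of $\Cu(A)$ is exactly \autoref{prp:IdealFilteredSR1}. To obtain property~(V) I would apply \autoref{prp:V-RSF}, which treats residually stably finite \ca{s}, after verifying that stable rank one forces residual stable finiteness. This small bridge is the only point requiring an extra argument: a \ca{} of stable rank one is stably finite, and by Rieffel's inequality $\mathrm{sr}(A/I) \leq \mathrm{sr}(A) = 1$ every quotient again has stable rank one and is therefore stably finite, so $A$ is residually stably finite. Property~(V) then follows from \autoref{prp:V-RSF}, and \autoref{prp:CharGGP} yields the conclusion. I do not anticipate any genuine difficulty, as the statement is essentially an assembly of the structural results proved earlier; the only step that is not a direct citation is the observation that stable rank one is inherited by quotients, which is what lets \autoref{prp:V-RSF} be brought to bear.
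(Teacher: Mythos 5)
Your proposal is correct and follows exactly the paper's own argument: both cases are deduced from \autoref{prp:CharGGP} by citing \autoref{prp:IdealFilteredRR0} and \autoref{prp:V-rr0} for real rank zero, and \autoref{prp:IdealFilteredSR1} together with \autoref{prp:V-RSF} (via residual stable finiteness) for stable rank one. The only difference is that you spell out why stable rank one implies residual stable finiteness, a step the paper asserts without proof.
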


We also obtain a positive solution to the Global Glimm Problem for \ca{s} that were not covered by previous results:

\begin{prpIntro}[\ref{prp:GGP-TopDimZero}]
Let $A$ be a separable \ca{} with topological dimension zero. 
Assume that $A$ is residually stably finite or has strict comparison of positive elements.
Then $A$ has the Global Glimm Property if and only if $A$ is nowhere scattered.
\end{prpIntro}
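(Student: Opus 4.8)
The plan is to reduce the statement to the characterization in \autoref{prp:CharGGP}, specifically the equivalence of conditions~(1) and~(4). The forward implication --- that the Global Glimm Property forces nowhere scatteredness --- holds for an arbitrary \ca{} and needs no argument here. For the converse, I would assume that $A$ is nowhere scattered and aim to verify that $\Cu(A)$ is ideal-filtered and has property~(V); combined with nowhere scatteredness, condition~(4) of \autoref{prp:CharGGP} then delivers the Global Glimm Property. In fact I expect both structural conditions to hold independently of nowhere scatteredness, so the argument really amounts to matching the hypotheses of the proposition to the relevant results established in \autoref{sec:IdealFiltered} and \autoref{sec:V}.

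First I would address ideal-filteredness. Since $A$ is separable and has topological dimension zero, \autoref{prp:IdealFilteredTopDimZero} applies directly and shows that $\Cu(A)$ is ideal-filtered. Next, for property~(V) I would split into the two cases allowed by the disjunctive hypothesis. If $A$ is residually stably finite, then \autoref{prp:V-RSF} immediately gives that $\Cu(A)$ has property~(V). If instead $A$ has strict comparison of positive elements, then --- using again that $A$ is separable with topological dimension zero --- \autoref{prp:V-AlmUnp} yields property~(V). In either case $\Cu(A)$ is both ideal-filtered and has property~(V).

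Having established both structural conditions, the implication (4)$\Rightarrow$(1) of \autoref{prp:CharGGP} completes the argument: a nowhere scattered \ca{} whose Cuntz semigroup is ideal-filtered and has property~(V) has the Global Glimm Property. There is no genuine obstacle at this final stage, as the entire weight of the statement rests on the earlier results; the only point that requires care is routing the disjunctive hypothesis to the correct source --- \autoref{prp:V-RSF} in the residually stably finite case and \autoref{prp:V-AlmUnp} in the strict comparison case --- noting that the separability and topological dimension zero assumptions are needed throughout (for ideal-filteredness in both cases, and again for property~(V) in the strict comparison case).
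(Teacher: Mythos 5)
Your proposal is correct and follows exactly the route the paper takes: ideal-filteredness from \autoref{prp:IdealFilteredTopDimZero}, property~(V) from \autoref{prp:V-RSF} or \autoref{prp:V-AlmUnp} according to the case, and then the equivalence in \autoref{prp:CharGGP}. Nothing to add.
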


\autoref{exa:NotIdealFiltered} shows that ideal-filteredness does not hold for the Cuntz semigroup of every \ca{}.
On the other hand, we do not know of any \ca{} whose Cuntz semigroup does not have property~(V);
see \autoref{qst:V}.

In \autoref{sec:Glimm} we also study permanence properties of the Global Glimm Property.
We show that it passes to hereditary sub-\ca{s}, quotients, and extensions (\autoref{prp:GlimmExtension}), to inductive limits (\autoref{prp:Limits}), and is invariant under Morita equivalence (\autoref{prp:Morita}).
Using this, we prove:

\begin{thmIntro}[\ref{prp:largestGlimmIdeal}]
Every \ca{} contains a (unique) largest ideal that has the Global Glimm Property.
\end{thmIntro}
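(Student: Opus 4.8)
The plan is to realize the largest ideal with the Global Glimm Property as the closure of the sum of \emph{all} ideals of $A$ that have the property, and then to verify that this sum again has the Global Glimm Property by combining the permanence results for quotients, extensions, and inductive limits established in \autoref{prp:GlimmExtension} and \autoref{prp:Limits}.

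First I would show that the Global Glimm Property is preserved under finite sums of ideals. Let $I$ and $J$ be ideals of $A$, each with the Global Glimm Property. Recall that the algebraic sum $I+J$ is automatically a closed ideal of $A$. The second isomorphism theorem yields
\[
(I+J)/J \;\cong\; I/(I\cap J),
\]
and the right-hand side is a quotient of $I$, hence has the Global Glimm Property by the quotient part of \autoref{prp:GlimmExtension}. Thus the extension
\[
0 \longrightarrow J \longrightarrow I+J \longrightarrow (I+J)/J \longrightarrow 0
\]
has both ideal and quotient with the Global Glimm Property, so $I+J$ inherits it by the extension part of \autoref{prp:GlimmExtension}. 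By induction, every finite sum of ideals with the Global Glimm Property again has the Global Glimm Property.

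Next, let $\mathcal{F}$ denote the collection of all ideals of $A$ with the Global Glimm Property. By the previous step, $\mathcal{F}$ is upward directed under inclusion, since $I,J\subseteq I+J\in\mathcal{F}$ for any $I,J\in\mathcal{F}$. Consequently the family $(I)_{I\in\mathcal{F}}$, together with the inclusion maps, is an inductive system in the category of \ca{s} whose limit is the closed ideal $J_0:=\overline{\sum_{I\in\mathcal{F}}I}$ (this is the closure of the increasing union of the members of $\mathcal{F}$). As every member of the system has the Global Glimm Property, so does $J_0$ by \autoref{prp:Limits}; hence $J_0\in\mathcal{F}$. Since $J_0$ contains every ideal with the Global Glimm Property by construction, it is the largest such ideal, and uniqueness is automatic because the maximum of a partially ordered set is unique whenever it exists.

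The step requiring the most care is the application of \autoref{prp:Limits}: the directed set $\mathcal{F}$ need not be countable, so one must ensure that the inductive-limit permanence is available for general (not merely sequential) inductive limits, and that the closure of the sum of an upward-directed family of closed ideals is genuinely realized as such a limit. Both are standard — the latter because, for an upward-directed family of closed ideals, the algebraic union is already a $\ast$-subalgebra and is dense in its closure — so the argument reduces entirely to the three permanence properties already in hand.
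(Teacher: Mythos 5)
Your proposal is correct and follows essentially the same route as the paper: establish that the family of ideals with the Global Glimm Property is upward directed via the sum $I+J$ (the paper uses $(I+J)/I\cong J/(I\cap J)$ where you use the symmetric identification, which is immaterial) together with \autoref{prp:GlimmExtension}, and then pass to the closure of the union as an inductive limit using \autoref{prp:Limits}. Your extra remark about uncountable directed systems is well taken but already covered, since \autoref{prp:Limits} is stated and proved for arbitrary inductive systems.
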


In forthcoming work, \cite{ThiVil22pre:Soft}, we will give other characterizations of the Global Glimm Property of a \ca{} through the abundance of soft elements.

\subsection*{Acknowledgements}
The authors want to thank the referees for their careful reading of the manuscript and their valuable suggestions.

\section{Preliminaries}\label{sec:Prelims}

We use $A_+$ to denote the set of positive elements in a \ca{} $A$.
Given $a,b\in A_+$, we write $a\precsim b$ if $a=\lim_n r_n b r_n^*$ for some sequence $(r_n)_n$ in $A$.
This induces an equivalence relation $\sim$ on $A_+$ by setting $a\sim b$ if $a\precsim b$ and $b\precsim a$.
Both of these relations were introduced by Cuntz in \cite{Cun78DimFct}.

The \emph{Cuntz semigroup} of $A$, denoted by $\Cu(A)$, is defined as $(A\otimes\mathcal{K})_+/\sim$.
The relation $\precsim$ induces a partial order on $\Cu(A)$, and addition of orthogonal positive elements induces an abelian monoid structure on $\Cu(A)$.
This turns $\Cu(A)$ into a positively ordered monoid.
We refer to \cite[Section~3.2]{AntPerThi18TensorProdCu} for details.
 
Let $P$ be a partially ordered set where every increasing sequence has a supremum.
Given $x,y\in P$, we write $x\ll y$, and we say that $x$ is \emph{way-below} $y$, if for every increasing sequence $(z_n)_n$ such that $y\leq \sup_n z_n$ there exists $m$ such that $x\leq z_m$.

It was shown in \cite{CowEllIva08CuInv} that the Cuntz semigroup of every \ca{} has the following properties:
\begin{itemize}
\item[\axiomO{1}] 
Every increasing sequence has a supremum.
\item[\axiomO{2}] 
Every element is the supremum of a $\ll$-increasing sequence.
\item[\axiomO{3}]
If $x'\ll x$ and $y'\ll y$, then $x'+y'\ll x+y$.
\item[\axiomO{4}] 
Given increasing sequences $(x_n)_n$ and $(y_n)_n$, we have $\sup_n(x_n+y_n)=\sup_n x_n + \sup_n y_n$.
\end{itemize}

An abstract Cuntz semigroup, or \emph{\CuSgp{}}, is defined as a positively ordered monoid satisfying \axiomO{1}-\axiomO{4}. 
A \emph{\CuMor{}} is an order-preserving monoid morphism that also preserves suprema of increasing sequences and the way-below relation.
The resulting category $\CatCu$ of \CuSgp{s} and \CuMor{s} is bicomplete (it has arbitrary limits and colimits);
see \cite{AntPerThi20CuntzUltraproducts}. Further, every \stHom{} $\varphi\colon A\to B$ induces a \CuMor{} $\Cu (\varphi )\colon \Cu(A)\to \Cu (B)$, and the functor $\Cu\colon C^*\to \CatCu$ is continuous; see \cite[Corollary~3.2.9]{AntPerThi18TensorProdCu}.

Since the groundbreaking work in \cite{CowEllIva08CuInv}, further properties that the Cuntz semigroup of every \ca{} satisfies have been uncovered:
\begin{itemize}
\item[\axiomO{5}] 
Given $x'\ll x$, $y'\ll y$ and $x+y\leq z$, there exists $c$ such that $y'\ll c$ and $x' +c \leq z \leq x+c$.
\item[\axiomO{6}] 
Given $x'\ll x\leq y+z$, there exist $v$ and $w$ such that $v\ll  x,y$, and $w\ll x,z$ satisfying $x'\ll v +w$.
\item[\axiomO{7}] 
Given $x_1'\ll x_1\ll w$ and $x_2'\ll x_2\ll w$, there exists $x\in S$ such that
\[
x_1',x_2'\ll x \ll w,x_1+x_2.
\]
\end{itemize}

Property~\axiomO{5} was shown in \cite[Proposition~4.6]{AntPerThi18TensorProdCu}.
This property is often applied for the case $y=0$, in which case it says:
Given $x'\ll x \leq z$, there exists~$c$ such that $x'+c\leq z\leq x+c$.
(This special case of \axiomO{5} had already been proven in \cite[Lemma~7.1]{RorWin10ZRevisited}.)

Properties \axiomO{6} and \axiomO{7} for Cuntz semigroups were proven in \cite[Proposition~5.1.1]{Rob13Cone} and \cite[Proposition~2.2]{AntPerRobThi21Edwards}.
We note that the version of \axiomO{7} given here is slightly different from, but easily seen to be equivalent to, \cite[Definition~2.1]{AntPerRobThi21Edwards}. Recently, a new property that every Cuntz semigroup also satisfies, termed \axiomO{8}, has been introduced in \cite[Definition~7.2]{ThiVil21arX:NowhereScattered}:
\begin{itemize}
\item[\axiomO{8}] Given $x'\ll x$, $y'\ll y$, $w=2w$ and $z$ in $ S$ such that $x+y\ll z+w$, there exist $z_1,z_2\in S$ satisfying
\[
z_1+z_2\ll z, \quad
x'\ll z_1+w, \quad
y'\ll z_2+w, \quad
z_1\ll x+w, \andSep
z_2\ll y+w.
\] 
\end{itemize}

The next result easily follows by induction from \axiomO{7}:
\begin{lma}
\label{prp:RefO7}
Let $S$ be a \CuSgp{} satisfying \axiomO{7}, and let $x_j',x_j,w\in S$ satisfy
\[
x_j'\ll x_j\leq w
\]
for $j=1,\ldots,n$.
Then there exists $x\in S$ such that
\[
x_1',\ldots,x_n'\ll x \ll w, x_1+\ldots+x_n.
\]
\end{lma}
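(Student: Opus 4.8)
The plan is to prove the statement by induction on $n$, using \axiomO{7} to merge two elements at a time. Throughout I will freely use the interpolation property of the way-below relation in a \CuSgp{}: whenever $a\ll b$ there exists $c$ with $a\ll c\ll b$, and, more generally, given finitely many elements $a_1,\ldots,a_m\ll b$ there exists a single $c$ with $a_1,\ldots,a_m\leq c\ll b$. Both facts follow from \axiomO{1} and \axiomO{2} by writing $b$ as the supremum of a $\ll$-increasing sequence $(b_k)_k$ and choosing an index $k$ past which all the $a_i$ have been absorbed. The base case $n=1$ is then immediate: since $x_1'\ll x_1\leq w$, interpolation yields $x$ with $x_1'\ll x\ll x_1$, and $x\ll x_1\leq w$ gives $x\ll w$, so $x_1'\ll x\ll w,x_1$.

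For the inductive step I assume the statement for $n$ and prove it for $n+1$. First I apply the induction hypothesis to $x_1',\ldots,x_n'$ and $x_1,\ldots,x_n$ to obtain $y$ with
\[
x_1',\ldots,x_n'\ll y\ll w,\ x_1+\cdots+x_n .
\]
Next I prepare the two pairs needed for \axiomO{7}. Using interpolation I choose $y_0$ with $x_1',\ldots,x_n'\leq y_0\ll y$, and I choose $q$ with $x_{n+1}'\ll q\ll x_{n+1}$; since $x_{n+1}\leq w$ this gives $q\ll w$. The two pairs are then $y_0\ll y\ll w$ and $x_{n+1}'\ll q\ll w$. Applying \axiomO{7} produces $x$ with $y_0,x_{n+1}'\ll x\ll w,\ y+q$. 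From $x_j'\leq y_0\ll x$ I get $x_j'\ll x$ for $j\leq n$, while $x_{n+1}'\ll x$ is immediate; together with $y+q\leq (x_1+\cdots+x_n)+x_{n+1}$ and $x\ll w$ this yields the required $x_1',\ldots,x_{n+1}'\ll x\ll w,\ x_1+\cdots+x_{n+1}$.

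The step that requires care—and the reason for the particular bookkeeping above—is the interaction between the bound $x_j\leq w$ in the hypothesis and the strict bound $x_j\ll w$ demanded by \axiomO{7}. This forces the interpolation that replaces $x_{n+1}$ by $q\ll w$. More subtly, one must resist the temptation to feed an interpolant between $y$ and $w$ as the upper element of the first pair: although such an interpolant is $\ll w$, it need not lie below $x_1+\cdots+x_n$, and the conclusion $x\ll (\text{upper}_1)+(\text{upper}_2)$ of \axiomO{7} would then fail to give the desired bound by $x_1+\cdots+x_{n+1}$. Keeping $y$ itself as the upper element of the first pair (which is legitimate because $y\ll w$ already) while shrinking its lower element to $y_0$ is exactly what makes both halves of the conclusion come out correctly. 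No further obstacle is expected; the remaining verifications are routine applications of the transitivity-type rules $a\leq b\ll c\Rightarrow a\ll c$ and $a\ll b\leq c\Rightarrow a\ll c$.
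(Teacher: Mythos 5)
Your proof is correct, and it is precisely the induction on $n$ via \axiomO{7} that the paper has in mind (the paper omits the argument, stating only that the lemma ``easily follows by induction from \axiomO{7}''). The interpolation steps you insert to upgrade $x_j\leq w$ to the strict bounds $\ll w$ required by \axiomO{7}, and the care taken to keep $y$ itself as the upper element of the first pair, are exactly the right bookkeeping.
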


We will also repeatedly make use of the following consequence of \axiomO{5}. We add a proof for the convienience of the reader.
\begin{lma}\label{prp:DivO5}
 Let $S$ be a \CuSgp{} satisfying \axiomO{5}, and let $k\in\NN$. Then, whenever $x',x,z\in S$ are such that $x'\ll x$ and $kx\leq z$, there exists $y\in S$ satisfying 
 \[
  (k-1)x'+y\leq z\leq ky,\andSep 
  x'\ll y.
 \]
\end{lma}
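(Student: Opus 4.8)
The plan is to argue by induction on $k$, peeling off one ``copy'' at a time with \axiomO{5}. For the base case $k=1$ the required conditions reduce to $y\le z\le y$ together with $x'\ll y$, so one is forced to take $y=z$; since $x'\ll x\le z$ implies $x'\ll z$, this choice works.

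For the inductive step, I would assume the statement for $k$ and suppose $x'\ll x$ and $(k+1)x\le z$. First I would interpolate a chain $x'\ll a\ll b\ll x$, which is possible because $\ll$ interpolates in a \CuSgp{} (a standard consequence of \axiomO{2}). The idea is then to apply \axiomO{5} to the decomposition $a+kx\le z$ (valid since $a\le x$): with $x'\ll a$ and $kb\ll kx$ (the latter from $b\ll x$ by iterating \axiomO{3}), \axiomO{5} yields $c$ with
\[
kb\ll c,\qquad x'+c\le z\le a+c.
\]
Crucially, the summand appearing in the \emph{upper} bound is the \emph{small} element $a$, whereas the element fed into the induction will be the \emph{larger} $b$; this decoupling is what makes the argument close.

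Now $kb\le c$, so the inductive hypothesis applied to $a\ll b$ and $kb\le c$ produces $y$ with $a\ll y$ and $(k-1)a+y\le c\le ky$. I claim this $y$ works for $k+1$. Indeed, $x'\ll a\ll y$ gives $x'\ll y$; for the lower estimate, $(k-1)a+y\le c$ together with $x'\le a$ gives $(k-1)x'+y\le c$, whence $kx'+y\le x'+c\le z$; and for the upper estimate, $z\le a+c\le a+ky\le(k+1)y$, using $c\le ky$ and $a\le y$.

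The main obstacle is precisely the upper estimate $z\le(k+1)y$: a naive peeling leaves a stray copy of $x$ (rather than of $y$) in a bound of the form $z\le x+c$, and one cannot pass from $x\le c\le ky$ to $x\le y$ because \CuSgp{s} are not cancellative. The resolution above is to arrange that the element left in the upper bound is the small approximant $a$, and to feed the induction the data $a\ll b$, so that its conclusion $a\ll y$ delivers exactly the inequality $a\le y$ needed to absorb the stray summand.
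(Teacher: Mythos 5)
Your proof is correct: the base case, the single application of \axiomO{5} to the decomposition $a+kx\le z$ (legitimate since $a\le x$ gives $a+kx\le(k+1)x\le z$, and $kb\ll kx$ follows from \axiomO{3}), and the bookkeeping in the inductive step all check out; in particular you correctly arranged for the stray summand in the upper bound $z\le a+c$ to be the small approximant $a$, which the inductive conclusion $a\ll y$ then absorbs. The paper, however, avoids induction entirely: it interpolates a single element $x^\natural$ with $x'\ll x^\natural\ll x$ and applies \axiomO{5} once to the decomposition $(k-1)x^\natural+x\le z$, with $(k-1)x'\ll(k-1)x^\natural$ in the first slot and $x^\natural\ll x$ in the second, obtaining $y$ with $x^\natural\ll y$ and $(k-1)x'+y\le z\le(k-1)x^\natural+y\le ky$. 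The underlying idea is the same as yours --- feed \axiomO{5} a primed copy of the part you want to reappear in the upper bound, so that the produced element dominates it --- but the paper packages all $k-1$ copies into one application rather than peeling them off one at a time, which makes for a noticeably shorter argument; your version costs nothing in correctness, only in length.
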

\begin{proof}
 Let $x',x,z\in S$ be as above, and take $x^\natural\in S$ such that $x'\ll x^\natural \ll x$. Then, we have 
 \[
  (k-1)x'\ll (k-1)x^\natural ,\quad 
  x^\natural \ll x,\andSep 
  (k-1)x^\natural + x \leq z.
 \]

 Applying \axiomO{5}, we obtain an element $y$ satisfying 
 \[
  x^\natural \ll y,\andSep 
  (k-1)x' + y\leq z\leq (k-1)x^\natural +y.
 \]

 It is readily checked that $y$ has the required conditions.
\end{proof}

\begin{pgr}
A submonoid of a \CuSgp{} is an \emph{ideal} if it is downward-hereditary and closed under suprema of increasing sequences.
Given a \CuSgp{} $S$ and an ideal $I\subseteq S$, the binary relations $\leq_I$ and $\sim_I$ on $S$ are defined by setting $x\leq_I y$ if $x\leq y+z$ for some $z\in I$, and by writing $x\sim_I y$ whenever $x\leq_I y$ and $y\leq_I x$.
One can check that $\sim_I$ is an equivalence relation, and that the \emph{quotient} $S/\sim_I$, which is usually denoted by $S/I$, is a \CuSgp{} when endowed with the order induced by $\leq_I$ and the expected addition;
see \cite[Lemma~5.1.2]{AntPerThi18TensorProdCu}.

Given a \ca{} $A$ and a closed, two-sided ideal $I$ of $A$, it follows that $\Cu(I)$ naturally is an ideal of $\Cu(A)$, and we have $\Cu(A/I)\cong \Cu(A)/\Cu(I)$ by \cite[Proposition~3.3]{CiuRobSan10CuIdealsQuot}. 
Moreover, by associating to a closed, two-sided ideal $I\subseteq A$ the ideal $\Cu(I)\subseteq\Cu(A)$, we obtain a natural isomorphism between the complete lattices of (closed, two-sided) ideals in $A$ and $\Cu(A)$;
see \cite[Proposition~5.1.10]{AntPerThi18TensorProdCu}.
\end{pgr}

\begin{pgr}
\label{pgr:2OmegaDef}
Let $S$ be a \CuSgp{}. 
An element $x\in S$ is said to be \emph{$(k,\omega)$-divisible} if for every $x'\in S$ satisfying $x'\ll x$ there exists an element $z$ such that $kz\ll x$ and $x'\ll\infty z$.
Equivalently, for every $x'$ with $x'\ll x$ there exists $z$ such that $kz\leq x$ and $x'\leq\infty z$;
see \cite[Definition~5.1]{RobRor13Divisibility} and \cite[Paragraph~5.1]{AntPerRobThi22CuntzSR1}.

Further, one says that $x$ is \emph{weakly $(k,\omega)$-divisible} if for every $x'\in S$ satisfying $x'\ll x$ there exist elements $z_1,\ldots, z_n$ such that $kz_j\ll x$ for every $j$ and $x'\ll z_1+\ldots +z_n$.
Equivalently, for every $x'$ with $x'\ll x$, there exist $z_1,\ldots, z_n$ such that $kz_j\leq x$ for every $j$ and $x'\leq z_1+\ldots +z_n$.
 
A \CuSgp{} is said to be \emph{(weakly) $(k,\omega )$-divisible} if each of its elements is (weakly) $(k,\omega )$-divisible. By \cite[Theorem~8.9]{ThiVil21arX:NowhereScattered}, a \ca{} is \emph{nowhere scattered} (that is, it has no nonzero elementary ideal-quotients) if and only if its Cuntz semigroup is weakly $(2,\omega )$-divisible.

As proven in \cite[Lemma~8.7]{ThiVil21arX:NowhereScattered}, a weakly $(2,\omega)$-divisible \CuSgp{} is weakly $(k,\omega)$-divisible for every $k\in\NN$.
With a similar argument, one can prove the analogous statement for $(2,\omega)$-divisibility.
\end{pgr}

\section{The Global Glimm Property and Divisibility of the Cuntz semigroup}
\label{sec:Glimm}

In this section we characterize the Global Glimm Property of a \ca{} in various ways, most importantly by the property that its Cuntz semigroup is $(2,\omega)$-divisible;
see \autoref{prp:GlimmFirstChar}.

Thus, since a \ca{} is nowhere scattered if and only if its Cuntz semigroup is weakly $(2,\omega)$-divisible (\cite[Theorem~8.9]{ThiVil21arX:NowhereScattered}), the Global Glimm Problem can be restated as: Is every weakly $(2,\omega)$-divisible Cuntz semigroup $(2,\omega)$-divisible?

\begin{dfn}[{\cite[Definition~4.12]{KirRor02InfNonSimpleCalgAbsOInfty}}]
\label{dfn:GGP}
A \ca{} $A$ has the \emph{Global Glimm Property} if for each $a\in A_+$, each natural number $k\geq 2$, and each $\varepsilon>0$, there exists a \stHom{} $\varphi\colon C_0((0,1],M_k)\to\overline{aAa}$ such that $(a-\varepsilon)_+$ belongs to the ideal of $A$ generated by the image of $\varphi$. 
\end{dfn}

\begin{rmk}
The literature contains several (sometimes nonequivalent) statements of the `Global Glimm (Halving) Property'. 
For example, in \cite[Remark~5.7]{RobRor13Divisibility}, a unital \ca{} $A$ is said to have the `Global Glimm Halving Property' if there exists a \stHom{} $M_2(C_0((0,1]))\to A$ with full image.
The example $A=M_2(\CC)$ shows that this is not equivalent to the `Global Glimm Property' as defined in \cite[Definition~4.12]{KirRor02InfNonSimpleCalgAbsOInfty}.

Further, in \cite[Definition~1.2]{BlaKir04GlimmHalving}, a \ca{} $A$ is said to have the \emph{Global Glimm Halving Property} if for every $a\in A_+$ and $\varepsilon>0$ there exists $r\in\overline{aAa}$ such that $r^2=0$ and such that $(a-\varepsilon)_+$ belongs to the ideal of $A$ generated by~$r$.
We will see in \autoref{prp:GlimmFirstChar} below that this definition is equivalent to \cite[Definition~4.12]{KirRor02InfNonSimpleCalgAbsOInfty}.
\end{rmk}

In preparation for the proof of \autoref{prp:GlimmFirstChar}, we need some basic results about scales.
A \emph{scale} in a \CuSgp{} $S$ is a subset $\Sigma\subseteq S$ that is downward-hereditary, closed under suprema of increasing sequences, and that generates $S$ as an ideal;
see \cite[Definition~4.1]{AntPerThi20CuntzUltraproducts}. 
The \emph{scale} of a \ca{} $A$ was defined in \cite[4.2]{AntPerThi20CuntzUltraproducts} as:
\[
\Sigma_A := \big\{ x\in\Cu(A): \text{for each $x'\ll x$ there exists $a\in A_+$ with $x'\leq[a]$} \big\},
\]

The next result provides basic properties about $\Sigma_A$.

\begin{lma}
\label{prp:ScaleCuA}
Let $A$ be a \ca.
Then the following statements hold:
\begin{enumerate}
\item
Let $x\in\Sigma_A$ and $x'\in\Cu(A)$ satisfy $x'\ll x$.
Then there exists $a\in A_+$ such that $x'\ll[a]\ll x$.
\item
We have
\[
\Sigma_A = \big\{ x\in\Cu(A): \text{there exists $a\in A_+$ with $x\leq[a]$} \big\}.
\]
\item
The set $\Sigma_A$ is upward directed:
given $x,y\in\Sigma_A$ there exists $z\in\Sigma_A$ such that $x,y\leq z$.
\item
Let $x\in\Cu(A)$.
Then there exists $a\in A_+$ such that $x\leq\infty[a]$.
\item
If $A$ is $\sigma$-unital with strictly positive element $h\in A$, then $\Sigma_A=\{x\in\Cu(A): x\leq[h]\}$.
\end{enumerate}
\end{lma}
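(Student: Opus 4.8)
The plan is to establish part~(2) first and then deduce the remaining statements from it, isolating the one genuinely C*-algebraic input in the proof of part~(1). For~(2), write $T=\{x\in\Cu(A):\text{there is }a\in A_+\text{ with }x\le[a]\}$. The inclusion $T\subseteq\Sigma_A$ is immediate: if $x\le[a]$ and $x'\ll x$, then $x'\ll[a]$, so $x'\le[(a-\delta)_+]$ for some $\delta>0$, and $(a-\delta)_+\in A_+$. For $\Sigma_A\subseteq T$ I would use \axiomO{2} to write $x=\sup_n x_n$ with $x_n\ll x_{n+1}$; the definition of $\Sigma_A$ then supplies $a_n\in A_+$ with $x_n\le[a_n]$. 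To replace this sequence by a single element I would rescale and sum, setting $a=\sum_n 2^{-n}(1+\|a_n\|)^{-1}a_n\in A_+$: the series converges in norm, the $n$th summand is a positive scalar multiple of $a_n$ dominated by $a$, and since $[\lambda a_n]=[a_n]$ for $\lambda>0$ one gets $[a_n]\le[a]$, hence $x=\sup_n x_n\le[a]$.

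Statements~(3) and~(5) then follow quickly. For~(3), given $x,y\in\Sigma_A$ choose by~(2) elements $a,a'\in A_+$ with $x\le[a]$ and $y\le[a']$; since $a,a'\le a+a'$ we have $x,y\le[a+a']$, and $[a+a']\in\Sigma_A$ by~(2). For~(5), if $h$ is strictly positive then every $a\in A_+$ satisfies $a\precsim h$, so any $x\in\Sigma_A$ obeys $x\le[a]\le[h]$ by~(2); the reverse inclusion is the already-proved $T\subseteq\Sigma_A$ applied to $h\in A_+$.

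The heart of the matter is~(1), and the key step is a \emph{realization lemma}: if $b\in(A\otimes\KK)_+$ satisfies $b\precsim a$ for some $a\in A_+$, then for every $\varepsilon>0$ the class $[(b-\varepsilon)_+]$ is represented by an element of $A_+$. I would prove this using Rørdam's lemma to write $(b-\varepsilon)_+=r(a-\delta)_+r^*$ for suitable $\delta>0$ and $r\in A\otimes\KK$, and then passing to the other side of the polar decomposition: with $v=r(a-\delta)_+^{1/2}$ one has $vv^*=(b-\varepsilon)_+$, while a direct matrix computation gives $v^*v=(a-\delta)_+^{1/2}\,q\,(a-\delta)_+^{1/2}\otimes e_{11}$, where $q=(r^*r)_{11}\in A_+$ is the top-left corner of $r^*r$. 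Thus $v^*v$ lies in the corner $A\otimes e_{11}$, so $[(b-\varepsilon)_+]=[v^*v]$ is the class of an element of $A_+$. This landing-in-$A$ phenomenon is the main obstacle, since the scale condition by itself only produces elements of $A_+$ lying \emph{above} pieces of $x$, whereas~(1) needs a realized class squeezed \emph{below}~$x$. Granting the lemma, (1) is routine: using~(2) write $x=[b]$ with $b\precsim a$ and $a\in A_+$; given $x'\ll x$ pick $\varepsilon>0$ with $x'\le[(b-2\varepsilon)_+]$, let $a_\varepsilon\in A_+$ realize $[(b-\varepsilon)_+]$, and observe that $x'\le[(b-2\varepsilon)_+]\ll[(b-\varepsilon)_+]=[a_\varepsilon]\ll[b]=x$.

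Finally, for~(4) I would argue without the scale, for arbitrary $x=[b]\in\Cu(A)$. Approximating $b$ by $b'\in M_n(A)_+$ with $\|b-b'\|<\varepsilon$ gives $(b-\varepsilon)_+\precsim b'$, and bounding each entry by $a'=\sum_i b'_{ii}\in A_+$ (every $b'_{ij}$ lies in the ideal generated by $a'$) yields $[(b-\varepsilon)_+]\le[b']\le\infty[a']$. Running over $\varepsilon=1/m$ produces countably many $a'_m\in A_+$, which I would again combine into a single $a\in A_+$ by the rescaling-and-summing device above, so that $\infty[a'_m]\le\infty[a]$ for all $m$ and hence $x=\sup_m[(b-1/m)_+]\le\infty[a]$.
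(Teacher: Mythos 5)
Your proof is correct and follows essentially the same route as the paper: the crux of~(1) is R\o{}rdam's lemma, used to transport the class $[(b-\varepsilon)_+]$ to the other side of a polar-type decomposition so that it is realized inside $A_+$ (the paper cites the version giving $rr^*\in\overline{aAa}$ directly, whereas you use the form $(b-\varepsilon)_+=r(a-\delta)_+r^*$ and check by a corner computation that $v^*v$ lands in $A\otimes e_{11}$ --- the same idea), and~(2), (3), (5) are handled identically, with your $(1+\|a_n\|)^{-1}$ normalization even being slightly more careful than the paper's. The only real divergence is~(4): the paper simply invokes that $\Sigma_A$ generates $\Cu(A)$ as an ideal (part of the definition of the scale, established in the cited reference) and combines this with~(2) and~(3), while you reprove this fact from scratch by approximating within $M_n(A)$ and bounding by the sum of diagonal entries; both arguments are valid, yours being more self-contained at the cost of a little extra C*-algebraic bookkeeping.
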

\begin{proof}

(1) 
Choose $x''\in\Cu(A)$ such that $x'\ll x''\ll x$.
By definition of $\Sigma_A$, we obtain $b\in A_+$ such that $x''\leq[b]$.
Choose $c\in(A\otimes\KK)_+$ such that $x''=[c]$.
Then choose $\varepsilon>0$ such that $x'\ll[(c-\varepsilon)_+]$.
Since $c\precsim b$, we can apply R\o{}rdam's Lemma (see, for example, \cite[Theorem~2.30]{Thi17:CuLectureNotes}), to obtain $r\in A\otimes\KK$ such that $(c-\varepsilon)_+=r^*r$ and $rr^*\in\overline{bAb}$.
Set $a:=rr^*$, which belongs to $A_+$.
We have
\[
x'\ll[(c-\varepsilon)_+]=[r^*r]=[rr^*]=[a], \andSep
[a]=[(c-\varepsilon)_+]\ll[c]=x''\ll x.
\]

(2) 
Let $x\in\Sigma_A$.
Choose a $\ll$-increasing sequence $(x_n)_n$ in $\Cu(A)$ with supremum~$x$.
For each $n$, we obtain $a_n\in A_+$ such that $x_n\leq[a_n]$.
Set
\[
a := \sum_{n=1}^\infty \frac{1}{2^n\|a_n\|}a_n \in A_+.
\]

Then, for each $n$, we have
\[
x_n \leq [a_n] \leq [a],
\]
and therefore $x\leq[a]$, as desired.

(3)  
Let $x_1,x_2\in\Sigma_A$.
Applying~(2), we obtain $a_1,a_2\in A_+$ such that $x_1\leq[a_1]$ and $x_2\leq[a_2]$.
Set $a:=a_1+a_2\in A_+$.
Then $[a]\in\Sigma_A$ is the desired upper bound for $x_1$ and $x_2$.

(4) 
Since $\Sigma_A$ generates $\Cu(A)$ as an ideal, we obtain a sequence $(y_n)_n$ in $\Sigma_A$ such that $x\leq\sum_{n=1}^\infty y_n$.
Applying~(3), and using that $\Sigma_A$ is closed under suprema of increasing sequences, we obtain $y\in\Sigma_A$ such that $y_n\leq y$ for each $n$.
Applying~(2), we obtain $a\in A_+$ such that $y\leq[a]$. 
It follows that
\[
x 
\leq \sum_{n=1}^\infty y_n
\leq \sum_{n=1}^\infty y
= \infty y
\leq \infty[a].
\]

(5) 
Assume that $A$ is $\sigma$-unital and that $h\in A_+$ is a strictly positive element. 
Then, we know by construction that $\{x\in\Cu(A): x\leq[h]\}\subseteq \Sigma_A$. 
Further, since $h$ is strictly positive, we have $a\precsim h$ for every $a\in A_+$.

Thus, given $x\in \Sigma_A$, we know by (2) that $x\leq [a]$ for some $a\in A_+$. 
Since $a\precsim h$, we get $x\leq [h]$. This shows $\Sigma_A\subseteq \{x\in\Cu(A): x\leq[h]\}$.
\end{proof}

As mentioned in \autoref{pgr:2OmegaDef}, a $(2,\omega)$-divisible \CuSgp{} is $(k,\omega)$-divisible for every $k\in\NN$. 
We will also use a version for scales:

\begin{lma}
\label{prp:Div2DivN}
Let $\Sigma$ be a scale in a \CuSgp{} $S$.
Assume that every element in~$\Sigma$ is $(2,\omega)$-divisible.
Then every element in~$\Sigma$ is $(k,\omega)$-divisible for every $k\geq 2$.
\end{lma}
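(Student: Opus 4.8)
The plan is to argue by induction along the powers of two, spending the standing $(2,\omega)$-divisibility hypothesis exactly once in each inductive step. The decisive structural input is that a scale is downward-hereditary: if $w\leq x$ and $x\in\Sigma$, then $w\in\Sigma$, so $w$ is again $(2,\omega)$-divisible. This is what will allow me to ``halve'' auxiliary elements appearing below $x$.

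First I would record two routine reductions. For the downward direction, if $x$ is $(k,\omega)$-divisible and $2\leq j\leq k$, then $x$ is $(j,\omega)$-divisible: a witness $z$ for $x'\ll x$ with $kz\leq x$ and $x'\leq\infty z$ also satisfies $jz\leq kz\leq x$. Consequently it suffices to establish $(2^m,\omega)$-divisibility of every element of $\Sigma$ for all $m\geq 1$ (the base case $m=1$ being the hypothesis), since any $k\geq 2$ satisfies $k\leq 2^m$ for some $m$. Throughout I work with the equivalent ``$\leq$'' formulation of divisibility recorded in \autoref{pgr:2OmegaDef}.

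The core is the inductive step: assuming every element of $\Sigma$ is $(2^m,\omega)$-divisible, I show the same for $2^{m+1}$. Fix $x\in\Sigma$ and $x'\ll x$, and use \axiomO{2} to interpolate $x'\ll x^\natural\ll x$. Applying the inductive hypothesis to $x^\natural\ll x$ yields $w$ with $2^m w\leq x$ and $x^\natural\leq\infty w$; in particular $w\leq x$, so $w\in\Sigma$ and $w$ is $(2,\omega)$-divisible. Note that $2^{m+1}z=2^m(2z)\leq 2^m w\leq x$ will settle the divisibility estimate once I produce a suitable $z$ with $2z\leq w$.

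The step I expect to be the main obstacle is transporting the condition ``$x'\leq\infty(-)$'' through the halving, since $(2,\omega)$-divisibility of $w$ only controls elements strictly below $w$. To bridge this, I would write $w=\sup_j w_j$ as a $\ll$-increasing supremum (\axiomO{2}), observe that $\infty w=\sup_n n w_n$ is the supremum of the increasing sequence $(n w_n)_n$, and use $x'\ll x^\natural\leq\infty w$ to extract $n_0$ with $x'\leq n_0 w'$ for $w':=w_{n_0}\ll w$. Applying $(2,\omega)$-divisibility of $w$ to $w'\ll w$ gives $z$ with $2z\leq w$ and $w'\leq\infty z$, whence $x'\leq n_0 w'\leq\infty z$. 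Together with $2^{m+1}z\leq x$ from the previous paragraph, $z$ witnesses $(2^{m+1},\omega)$-divisibility of $x$ at $x'$, completing the induction and hence the proof.
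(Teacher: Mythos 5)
Your proof is correct and follows essentially the same route as the paper's: both arguments repeatedly halve within the scale, using that $\Sigma$ is downward-hereditary to keep each successive divisor $(2,\omega)$-divisible, and both transport the condition $x'\leq\infty(\cdot)$ through each halving by passing to a way-below predecessor of the divisor. The only cosmetic difference is that you organize the iteration as a formal induction on the exponent of $2$, whereas the paper unrolls it as an explicit $n$-fold repetition.
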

\begin{proof}
 Let $k\in \NN$ and take $n\in\NN$ such that $k\leq 2^n$. Given $x',x\in \Sigma$ such that $x'\ll x$,  there exists $y_1\in S$ such that $2y_1\ll x$ and $x'\ll \infty y_1$. Since $y_1$ is bounded by $x$ and $\Sigma$ is downward-hereditary, we get $y_1\in \Sigma$.
 
 Now take $y_1'\ll y_1$ such that $x'\ll \infty y_1'$. Applying $(2,\omega )$-divisibility to $y_1'\ll y_1$, we obtain $y_2\in S$ satisfying $y_1'\ll\infty y_2$ and $2y_2\ll y_1$. Since $y_2$ is bounded by $y_1\in\Sigma$, we get $y_2\in\Sigma$. Note that $4y_2\ll 2y_1\ll x$ and $x'\ll \infty y_1'\leq \infty y_2$.
 
 Repeating this argument $n$ times, we obtain $y_n\in\Sigma$ such that $x'\ll \infty y_n$ and $ky_n\leq 2^n y_n\ll x$, as desired.
\end{proof}

\begin{lma}
\label{prp:ScaleDiv}
Let $\Sigma$ be a scale in a \CuSgp{} $S$ satisfying \axiomO{5}, \axiomO{6} and \axiomO{7}.
Assume that every element in $\Sigma$ is $(2,\omega)$-divisible.
Then every element in $S$ is $(2,\omega)$-divisible.
\end{lma}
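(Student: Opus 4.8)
The plan is to reduce the statement to elements that are dominated by a finite sum of elements of $\Sigma$, and to prove divisibility for those by induction on the number of summands. Two soft facts drive the reduction. First, $(2,\omega)$-divisibility passes to suprema of increasing sequences: if $x=\sup_k x_k$ with $x_k\ll x_{k+1}$ and each $x_k$ is $(2,\omega)$-divisible, then any $x'\ll x$ satisfies $x'\ll x_k$ for some $k$, and a divisor $z$ of $x_k$ (with $2z\ll x_k$ and $x'\ll\infty z$) already works for $x$ since $2z\ll x_k\le x$. Second, by \axiomO{2} every $x\in S$ is the supremum of a $\ll$-increasing sequence $(x_k)_k$, and because $\Sigma$ generates $S$ as an ideal, each $x_k$ (being way-below $x$) lies below some finite sum $s_1+\dots+s_m$ of elements of $\Sigma$. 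Hence it suffices to show that every $y\in S$ with $y\le s_1+\dots+s_m$, $s_i\in\Sigma$, is $(2,\omega)$-divisible.

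I would prove this by induction on $m$. For $m=1$ the element $y$ lies below a single element of $\Sigma$, so $y\in\Sigma$ (as $\Sigma$ is downward-hereditary) and $y$ is $(2,\omega)$-divisible by hypothesis. For the inductive step write $y\le a+s$ with $a:=s_1+\dots+s_m$ and $s:=s_{m+1}\in\Sigma$. The heart of the argument is a \emph{fitting decomposition}: given $y'\ll y''\ll y$, I would produce $u\le a$ and $v\le s$ with
\[
y''\le u+v\le y.
\]
Granting this, $u$ lies below a sum of $m$ elements of $\Sigma$, hence is $(2,\omega)$-divisible by the induction hypothesis, while $v\le s$ forces $v\in\Sigma$ and so $v$ is $(2,\omega)$-divisible. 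Choosing $u^\flat\ll u$ and $v^\flat\ll v$ with $y'\le u^\flat+v^\flat$ (possible since $u+v=\sup(u^\flat+v^\flat)$ and $y'\ll y''\le u+v$) and halving $u$ and $v$ yields $z_1,z_2$ with $2z_1\ll u$, $2z_2\ll v$, $u^\flat\ll\infty z_1$ and $v^\flat\ll\infty z_2$. Setting $z:=z_1+z_2$ and using \axiomO{3} gives $2z=2z_1+2z_2\ll u+v\le y$, so $2z\ll y$, while $y'\le u^\flat+v^\flat\ll\infty z_1+\infty z_2=\infty z$. Thus $y$ is $(2,\omega)$-divisible. (The divisibility of $\Sigma$ at every level, \autoref{prp:Div2DivN}, is what makes the halvings available; the gain over a bare use of \axiomO{6} is that the two halved pieces sum \emph{below} $y$, so that \axiomO{3} can fuse them into a single divisor of $y$.)

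The main obstacle is exactly the fitting decomposition $y''\le u+v\le y$. A direct appeal to \axiomO{6} only yields $u\ll y$, $v\ll y$ and $y''\ll u+v$, with no upper control on $u+v$; and this is fatal, because $2z_1\ll y$ together with $2z_2\ll y$ does \emph{not} give $2z_1+2z_2\ll y$, so the recombination step collapses unless $u+v\le y$. To force the upper bound I would realise $u$ and $v$ as \emph{complementary} pieces inside $y$ rather than as overlapping parts: after extracting an $s$-part of $y$ via \axiomO{6}, one applies the complementation form of \axiomO{5} (the special case $x'\ll x\le z\Rightarrow\exists c\colon x'+c\le z\le x+c$, together with its two-variable version) to trim the pieces so that their sum stays below $y$ while still dominating $y''$. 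Arranging this so that the trimmed pieces interact correctly with $\ll$ — covering $y''$ from below yet summing below $y$ — is the delicate point, and is where \axiomO{5} (as opposed to \axiomO{6} and \axiomO{7} alone) is indispensable; I expect \autoref{prp:RefO7} and \autoref{prp:DivO5} to furnish the precise refinements of \axiomO{7} and \axiomO{5} needed to make the complementation go through.
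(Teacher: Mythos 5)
Your reduction to elements dominated by finite sums of elements of $\Sigma$, and your diagnosis of the central difficulty, are both correct and broadly match the paper (which phrases the reduction via the amplifications $\Sigma^{(2^n)}$). You are also right that the whole point is to produce two halvable pieces whose \emph{sum} lies below $y$, since $2z_1\ll y$ and $2z_2\ll y$ alone do not combine. But the proposal stops exactly at that point: the ``fitting decomposition'' $y''\le u+v\le y$ with $u\le a$ and $v\le s$ is asserted, not proved, and it is not a routine consequence of \axiomO{5}--\axiomO{7}. If you run the obvious attempt --- \axiomO{6} to split $y''\ll y\le a+s$ into pieces $u_1\le a$ and $v_1\le s$ with $y''\ll u_1+v_1$, then \axiomO{5} to get a complement $c$ with $v_1'+c\le y\le v_1+c$, then \axiomO{6} again to push $u_1$ into $v_1+c$ --- you end up with three pieces $w_1\le v_1$, $w_2\le c$ and $v_1'$, and the troublesome summand $w_1$ lands back on the $s$-side, so their sum is only bounded by $v_1+c+v_1'$ rather than by $y$. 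What you are asking for is a Riesz-type refinement with an upper bound, which general \CuSgp{s} do not have; this is precisely where the difficulty of the lemma is concentrated, so it cannot be left as an expectation.

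The paper's proof avoids any such decomposition by an asymmetric ``one-third trick''. It uses $(3,\omega)$-divisibility of the two pieces (available on $\Sigma$ by \autoref{prp:Div2DivN}): from $z_1$ it extracts $c$ with $3c\le z_1\le x$, and \autoref{prp:DivO5} then produces $e$ with $2c'+e\le x\le 3e$. The inequality $x\le 3e$ is the crucial extra information: it allows one to squeeze a sub-piece $f$ of the second summand below the \emph{single} element $e$ (via \axiomO{6} applied to $d'\ll d\le 3e$ followed by \autoref{prp:RefO7}), while also arranging $f\le 3d\le z_2$ so that $f\in\Sigma$. Halving $f$ inside $e$ gives $2g\le f\le e$, and the final divisor is $h=c'+g$, with $2h=2c'+2g\le 2c'+e\le x$; no decomposition of $x$ as $u+v$ with controlled sum is ever produced. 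To complete your argument you would either have to prove your fitting decomposition (which I do not believe follows from the stated axioms) or switch to this device, spending a third of one piece to leave room for a half of the other.
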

\begin{proof}
For $d\geq 1$, we consider the $d$-fold amplification
\[
\Sigma^{(d)} 
:= \big\{ x\in S : \text{ for each $x'\ll x$ there are $y_1,\ldots,y_d\in\Sigma$ with $x'\leq \sum_{j=1}^d y_j$} \big\}.
\]

It suffices to show that every element in $\Sigma^{(2)}$ is $(2,\omega)$-divisible, for then an inductive argument shows that every element in $\Sigma^{(2^n)}$ is $(2,\omega)$-divisible for every $n$, which implies the desired result. Indeed, for any pair of elements $x',x\in S$ such that $x'\ll x$ there exist $n\in\NN$ and elements $y',y\in \Sigma^{(2^n)}$ satisfying $x'\ll y'\ll y\ll x$. Thus, if $y\in \Sigma^{(2^n)}$ is $(2,\omega)$-divisible, one can find $z\in S$ such that $2z\ll y$ and $y'\ll \infty z$. This would imply $2z\ll x$ and $x'\ll \infty z$, as required.

So let $x\in \Sigma^{(2)}$ and $x''\in S$ satisfy $x''\ll x$.
Choose $x'$ such that $x''\ll x'\ll x$, and let  $y_1,y_2\in\Sigma$ be such that $x'\leq y_1+y_2$.
Applying \axiomO{6}, we obtain $z_1,z_2\in S$ satisfying
\[
x''\ll z_1+z_2, \andSep z_j\leq x',y_j \quad \text{for $j=1,2$}.
\]

Choose $z_1',z_2'$ such that
\[
x''\ll z_1'+z_2', \andSep z_j'\ll z_j \quad \text{ for $j=1,2$}.
\]

We have $z_j\leq y_j\in\Sigma$ and therefore $z_j\in\Sigma$ for $j=1,2$.
Hence, by \autoref{prp:Div2DivN}, $z_1$ and $z_2$ are $(3,\omega)$-divisible and we obtain $c$ and $d$ such that
\[
3c\leq z_1, \quad
z_1'\ll \infty c, \quad
3d\leq z_2, \andSep
z_2'\ll \infty d.
\]

Choose $c'$ and $d'$ such that
\[
c'\ll c, \quad
z_1'\ll \infty c', \quad
d'\ll d, \andSep
z_2'\ll\infty d'.
\]

Since $z_1\leq x$, we can apply \autoref{prp:DivO5} to $3c\leq x$ and obtain $e$ satisfying
\[
2c'+e\leq x\leq 3e, \andSep c'\ll e.
\]

Then $d'\ll d\leq z_2\leq x\leq 3e$.
Applying \axiomO{6}, we obtain $e_1,e_2,e_3$ such that
\[
d'\ll e_1+e_2+e_3, \andSep e_j\leq d,e \quad \text{for $j=1,2,3$}.
\]

Choose $e_1',e_2',e_3'$ such that
\[
d'\ll e_1'+e_2'+e_3', \andSep e_j'\ll e_j \quad \text{ for $j=1,2,3$}.
\]

Applying \axiomO{7} for $e_j'\ll e_j\leq e$, we obtain $f$ such that
\[
e_1',e_2',e_3'\leq f \leq e, e_1+e_2+e_3.
\]

Then
\[
z_2' \ll \infty d' \leq \infty (e_1'+e_2'+e_3') \leq \infty f.
\]

Further, we have
\[
f\leq e_1+e_2+e_3 \leq 3d \leq z_2.
\]

Therefore, we have $f\in\Sigma$.
Choose $f'$ such that
\[
f'\ll f, \andSep z_2'\ll \infty f'.
\]

Using that $f$ is $(2,\omega)$-divisible, we obtain $g$ satisfying
\[
2g\leq f, \andSep f'\ll\infty g.
\]

We claim that $h:=c'+g$ has the desired properties.
Using that $2g\leq f\leq e$, we have
\[
2h
= 2c' + 2g
\leq 2c'+e
\leq x.
\]

Further, we have $z_2'\ll \infty f' \leq \infty g$, and thus
\[
x'' 
\ll z_1'+z_2'
\leq \infty c' + \infty g
= \infty h,
\]
as desired.
\end{proof}

\begin{thm}
\label{prp:GlimmFirstChar}
Let $A$ be a \ca.
Then the following are equivalent:
\begin{enumerate}
\item
$A$ has the Global Glimm Property:
for every $a\in A_+$, every $k\geq 2$, and every $\varepsilon>0$, there is a \stHom{} $\varphi\colon M_k(C_0((0,1]))\to\overline{aAa}$ such that $(a-\varepsilon)_+$ belongs to the ideal generated by the image of $\varphi$;
\item
for every $a\in A_+$, and every $\varepsilon>0$, there is a \stHom{} of the form $\varphi\colon M_2(C_0((0,1]))\to\overline{aAa}$ such that $(a-\varepsilon)_+$ belongs to the ideal generated by the image of $\varphi$;
\item
for every $a\in A_+$ and $\varepsilon>0$ there exists an element $r\in\overline{aAa}$ with $r^2=0$ such that $(a-\varepsilon)_+\in\overline{\linSpan}ArA$.
\item
$\Cu(A)$ is $(k,\omega)$-divisible for each $k\geq 2$: for every $x'\ll x$ in $\Cu(A)$, and every $k\geq 2$, there exists $y\in\Cu(A)$ such that $ky\leq x$ and $x'\leq\infty y$.
\item
$\Cu(A)$ is $(2,\omega)$-divisible.
\end{enumerate}
\end{thm}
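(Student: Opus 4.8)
The plan is to prove the cycle of implications $(1)\Rightarrow(2)\Rightarrow(3)\Rightarrow(5)\Rightarrow(4)\Rightarrow(1)$. The implication $(1)\Rightarrow(2)$ is immediate by specializing to $k=2$, and $(5)\Rightarrow(4)$ follows from \autoref{prp:Div2DivN} applied with $\Sigma=S=\Cu(A)$, since $S$ is itself a scale; thus the real content is concentrated in the three remaining steps.

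For $(2)\Leftrightarrow(3)$ I would use that $M_2(C_0((0,1]))$ is the universal \ca{} generated by a square-zero contraction, namely $x=t\otimes e_{12}$ with $t$ the coordinate function. Given $\varphi$ as in (2), the element $r:=\varphi(x)$ lies in $\overline{aAa}$, satisfies $r^2=\varphi(x^2)=0$, and generates the image of $\varphi$ as a \ca{}, so the ideal generated by $\mathrm{im}\,\varphi$ coincides with $\overline{\linSpan}ArA$; this yields (3). Conversely, a square-zero $r\in\overline{aAa}$ induces a \stHom{} $\varphi$ sending $x\mapsto r/\|r\|$ with $\mathrm{im}\,\varphi=C^*(r)$, giving (2).

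The main bridge to the Cuntz semigroup is $(3)\Rightarrow(5)$. Here I first observe that a square-zero element satisfies $r^*r\perp rr^*$ (since $(rr^*)(r^*r)=r(r^*)^2r=0$) and $r^*r\sim rr^*$, so that $y:=[r^*r]$ obeys $2y=[r^*r+rr^*]\leq[a]$, while the ideal generated by $r$ equals the one generated by $r^*r$, giving $[(a-\varepsilon)_+]\leq\infty y$. Letting $\varepsilon$ range over all positive reals shows that $[a]$ is $(2,\omega)$-divisible for every $a\in A_+$. To pass from these generators to the whole scale $\Sigma_A$, given $x\in\Sigma_A$ and $x'\ll x$ I would use \autoref{prp:ScaleCuA}(1) to interpose a class $[b]$ with $x'\ll[b]\ll x$; divisibility of $[b]$ then produces $y$ with $2y\leq[b]\leq x$ and $x'\leq\infty y$, so every element of $\Sigma_A$ is $(2,\omega)$-divisible. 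Finally \autoref{prp:ScaleDiv} upgrades this to $(2,\omega)$-divisibility of all of $\Cu(A)$, which is (5).

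The step I expect to be the main obstacle is $(4)\Rightarrow(1)$, where I must manufacture an honest \stHom{} $M_k(C_0((0,1]))\to\overline{aAa}$ out of purely Cuntz-theoretic data. Applying $(k,\omega)$-divisibility to $[(a-\varepsilon/2)_+]\ll[a]$ gives $y$ with $ky\leq[a]$ and $[(a-\varepsilon/2)_+]\leq\infty y$; writing $y=[b]$, the inequality $ky\leq[a]$ lets me place $k$ pairwise orthogonal, pairwise Cuntz-equivalent copies $b_1,\ldots,b_k$ inside $\overline{aAa}$. The delicate part is to invoke R\o{}rdam's Lemma to choose elements implementing these equivalences on suitable cut-downs $(b_i-\delta)_+$ so that they assemble into a genuine $C_0((0,1])$-valued system of matrix units; the orthogonality of the $b_i$ is precisely what forces the relevant products to vanish and makes the assignment $t\otimes e_{ij}\mapsto(\text{connecting element})$ multiplicative. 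Choosing $\delta$ small enough, via the way-below relation and $[(a-\varepsilon)_+]\ll[(a-\varepsilon/2)_+]\leq\infty[b_1]=\sup_n\infty[(b_1-1/n)_+]$, guarantees that $(a-\varepsilon)_+$ lies in the ideal generated by $b_1$, hence by $\mathrm{im}\,\varphi$, completing (1). Keeping track of the cut-downs and of the exact multiplicative relations, rather than any single estimate, is where the real work lies.
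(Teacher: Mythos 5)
Your proposal is correct and follows essentially the same route as the paper: the substantive implication $(3)\Rightarrow(5)$ is argued identically (reduce to the scale $\Sigma_A$ via \autoref{prp:ScaleCuA} and \autoref{prp:ScaleDiv}, then use that a square-zero $r$ gives orthogonal equivalent elements $r^*r\perp rr^*$, so $2[rr^*]\leq[a]$ and $[(a-\varepsilon)_+]\leq\infty[rr^*]$), and your $(5)\Rightarrow(4)$ via \autoref{prp:Div2DivN} with $\Sigma=\Cu(A)$ matches the remark in \autoref{pgr:2OmegaDef}. The one caveat is $(4)\Rightarrow(1)$: the paper disposes of this by citing \cite[Theorem~5.3(i)]{RobRor13Divisibility}, whereas you sketch the underlying construction (assembling a $C_0((0,1],M_k)$-valued system of matrix units from $k$ orthogonal, pairwise equivalent elements via R{\o}rdam's Lemma) without carrying it out --- that assembly is precisely the content of \cite[Lemma~2.4]{RobRor13Divisibility} and is genuinely delicate, so as written this step is a correct plan rather than a complete argument; either cite the result or supply the matrix-unit construction in full.
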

\begin{proof}
It is clear that~(1) implies~(2). One can also see that~(2) implies~(3).
It follows from \cite[Theorem~5.3~(i)]{RobRor13Divisibility} that~(4) implies~(1). Further, (4)~is equivalent to~(5); see \autoref{pgr:2OmegaDef}. 
We will now show that~(3) implies~(5).

By \autoref{prp:ScaleDiv}, it suffices to show that every element in $\Sigma_A$ is $(2,\omega)$-divisible.
So let $x\in\Sigma_A$ and $x'\in\Cu(A)$ satisfy $x'\ll x$.
Applying \autoref{prp:ScaleCuA}~(1), we obtain $a\in A_+$ such that $x'\ll[a]\ll x$.
Choose $\varepsilon>0$ such that $x'\ll [(a-\varepsilon )_+]$. 
By assumption, there exists $r\in\overline{aAa}$ such that $r^2=0$ and $(a-\varepsilon )_+\in \overline{\linSpan}ArA$.

Since $r\in\overline{aAa}$, we have $rr^*,r^*r\in \overline{aAa}$. 
Using that these two elements are orthogonal, one gets $rr^*\oplus rr^*\sim rr^*\oplus r^*r\sim rr^* + r^*r\precsim a$.
Thus, the element $c=[rr^*]$ in $\Cu(A)$ satisfies 
\[
2c = [rr^*\oplus rr^*] \leq [a] \ll x, \andSep 
x'\ll [(a-\varepsilon)_+]\leq \infty c,
\]
as required.
\end{proof}

By \cite[Theorem~8.9]{ThiVil21arX:NowhereScattered}, a \ca{} is nowhere scattered if and only if its Cuntz semigroup is weakly $(2,\omega)$-divisible.
Together with \autoref{prp:GlimmFirstChar}, we obtain the following reformulation of the Global Glimm Problem:

\begin{pbm}[Cuntz semigroup reformulation of the Global Glimm Problem]
\label{pbm:ReformulateDiv}
Let~$A$ be a \ca{}.
Assuming that $\Cu(A)$ is weakly $(2,\omega)$-divisible, does it follow that $\Cu(A)$ is $(2,\omega)$-divisible?
\end{pbm}

\begin{rmk}
In \cite{ThiVil22pre:Soft}, we will show that the conditions in \autoref{prp:GlimmFirstChar} are also equivalent to a strengthening of~(6):
for every $x\in\Cu(A)$ and every $k\geq 2$ there exists (a strongly soft element) $y\in\Cu(A)$ such that $ky\leq x\leq\infty y$.
\end{rmk}

Next, we prove permanence properties for $(2,\omega)$-divisibility of \CuSgp{s}, which through our characterization in \autoref{prp:GlimmFirstChar} lead to permanence properties for the Global Glimm Property.

\begin{prp}
\label{prp:DivExtension}
Let $S$ be a \CuSgp{} satisfying \axiomO{5}-\axiomO{8}, and let $I\subseteq S$ be an ideal.
Then $S$ is $(2,\omega)$-divisible if and only if $I$ and $S/I$ are.
\end{prp}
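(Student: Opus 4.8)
The plan is to prove the two implications separately; only the ``if'' direction requires \axiomO{5}-\axiomO{8}, while the forward direction is formal.

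For the ``only if'' direction, assume $S$ is $(2,\omega)$-divisible. That $S/I$ inherits the property follows from the fact that the quotient map $\pi\colon S\to S/I$ is a surjective \CuMor{}: given $\bar{y}'\ll\bar{y}$ in $S/I$, I would choose a preimage $y$ of $\bar{y}$, write $y=\sup_n y_n$ with $y_n\ll y$ using \axiomO{2}, observe that $\bar y=\sup_n\pi(y_n)$ so that $\bar y'\le\pi(y_m)$ for some $m$, and then apply $(2,\omega)$-divisibility of $S$ to the pair $y_m\ll y$; the image under $\pi$ of the resulting divisor works. For the ideal $I$, given $x'\ll x$ in $I$, I would again invoke \axiomO{2} in $S$ to write $x=\sup_n x_n$ with $x_n\ll x$ in $S$; since $I$ is downward-hereditary and $x\in I$, each $x_n\le x$ lies in $I$, and since $I$ is closed under suprema the supremum is unchanged, so $x'\le x_m$ for some $m$. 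Applying $(2,\omega)$-divisibility of $S$ to $x_m\ll x$ yields $z$ with $2z\le x$ and $x'\le x_m\le\infty z$; as $z\le 2z\le x\in I$ we get $z\in I$, so $z$ witnesses $(2,\omega)$-divisibility of $x$ inside $I$.

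The substance is the ``if'' direction. Assume $I$ and $S/I$ are $(2,\omega)$-divisible and let $x'\ll x$ in $S$; I must produce $z$ with $2z\le x$ and $x'\le\infty z$. The strategy is to divide ``modulo $I$'' and ``inside $I$'' separately and then glue the two divisors. First I would apply $(2,\omega)$-divisibility in $S/I$ to $\bar x'\ll\bar x$ and lift the resulting quotient divisor to an element $c\in S$ (taking a way-below lift); the defining inequalities then hold only up to error terms in $I$, that is, $2c\ll x+w$ with $w\in I$, and $x'\le nc+w'$ for some $n\in\NN$ and $w'\in I$ after interpolating $x'\ll x''\ll x$. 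Enlarging $w$ to an idempotent majorant of the form $\infty(\,\cdot\,)$ in $I$ (so that $w=2w$), I can feed the relation $2c=c+c\ll x+w$ into \axiomO{8}: this is precisely the regime for which \axiomO{8} was designed, since its hypothesis features an element with $w=2w$, matching the idempotency of the infinite multiples $\infty(\,\cdot\,)$ that govern the $x'\le\infty z$ requirement. This splits $c+c$ into summands lying below $x$ while controlling each summand modulo $w$. I would then apply $(2,\omega)$-divisibility of $I$ to divide the ideal error $w$, and reconcile the pieces into a single global divisor $z$ using \axiomO{6} to refine the sums, the iterated form of \axiomO{7} (\autoref{prp:RefO7}) to place the finitely many pieces below a common element, and \axiomO{5} to upgrade the approximate ``$+w$'' inequalities to the exact bound $2z\le x$.

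The main obstacle is exactly this final gluing. The quotient only controls $z$ up to $I$, so the inequality $2z\le x$ must be realized \emph{on the nose} rather than merely modulo $I$, while simultaneously the infinite multiple $x'\le\infty z$ must absorb both the genuine quotient contribution coming from $c$ and the leftover ideal contribution coming from the division of $w$ in $I$. Arranging the bookkeeping so that \axiomO{5} eliminates the error terms, \axiomO{6} and \axiomO{7} assemble the surviving pieces below $x$, and \axiomO{8} mediates the interaction with the idempotent element $\infty z=2\,\infty z$ is the heart of the argument; everything else reduces to interpolation via \axiomO{2} and the elementary behaviour of $\leq_I$.
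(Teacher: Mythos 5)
Your forward direction is fine (and your argument that $I$ inherits divisibility is more careful than the paper's, which simply declares it clear), and your overall architecture for the converse --- divide in the quotient, push the division below $x$ on the nose modulo an idempotent $w\in I$ via an \axiomO{8}-type statement, then divide inside $I$ and glue with \axiomO{5}--\axiomO{7} --- matches the paper's. But two concrete steps in your plan would fail as written. First, feeding $c+c\ll x+w$ into \axiomO{8} only returns \emph{two} elements $z_1,z_2$ with $z_1+z_2\ll x$ and $c'\ll z_i+w$; it does not return a single element $g$ with $2g\ll x$. Merging $z_1$ and $z_2$ into one element below both would require something like ideal-filteredness, which is not a hypothesis here, and \autoref{prp:RefO7} cannot do it either (it produces an element \emph{above} small pieces and below the sum $z_1+z_2$, which is useless for the bound $2g\le x$). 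The paper instead invokes \cite[Proposition~7.8]{ThiVil21arX:NowhereScattered}, a strictly stronger consequence of \axiomO{8} which, from $ny'\ll x+w$ with $w=2w$ and $y''\ll y'$, produces a \emph{single} $z$ with $nz\ll x$ and $y''\ll z+w$. You need this statement (or must re-derive it); raw \axiomO{8} is not enough.

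Second, you divide by $2$ in the quotient, but the gluing requires dividing by $3$ there. After obtaining $nz\ll x$ with $z'\ll z$, the tool that converts the approximate data into the exact bound is \autoref{prp:DivO5}, which yields $c$ with $(n-1)z'+c\le x\le nc$. The final candidate divisor is $z'+t$ with $t\in I$ and $2t\le s\le c$, so one needs $2(z'+t)\le 2z'+c\le x$, i.e.\ $n\ge 3$; moreover the inequality $x\le 3c$ is exactly what lets \axiomO{6} split the $I$-residue $r'$ of $x''$ (extracted from $x''\ll x'\le \infty z'+w$ with $r\le x',w$, hence $r\in I$) into three pieces below $c$, and \axiomO{7} recombine them into a single $s\le c$ with $r'\le 3s$ and $s\in I$. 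With $n=2$ one only gets $z'+c\le x\le 2c$, and the bound $2(z'+t)\le 2z'+c$ cannot be closed. The paper therefore applies $(3,\omega)$-divisibility of $S/I$, which follows from $(2,\omega)$-divisibility as noted in \autoref{pgr:2OmegaDef}. Your remark that the inequality $2z\le x$ must be realized on the nose names the right obstacle, but the sketch does not contain the two devices that actually overcome it.
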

\begin{proof}
Let $\pi\colon S\to S/I$ be the quotient map.
First assume that $S$ is $(2,\omega)$-divisible.
It is clear that $I$ is $(2,\omega)$-divisible as well.
To show that $S/I$ is $(2,\omega)$-divisible, let $\bar{x},\bar{y}\in S/I$ satisfy $\bar{x}\ll\bar{y}$, and let $x,y\in S$ be lifts of $\bar{x}$ and $\bar{y}$ respectively.
Then $\pi(x)\ll\pi(y)$.
Since $\pi$ is a \CuMor{}, we can choose $y'$ such that $\pi(x)\leq\pi(y')$ and $y'\ll y$.
Using that $y$ is $(2,\omega)$-divisible, we obtain $z\in S$ such that $2z\leq y$ and $y'\leq\infty z$.
Thus,
\[
2\pi(z) \leq \pi(y)=\bar{y}, \andSep
\bar{x}=\pi (x) \leq \pi(y') \leq \infty \pi(z),
\]
which shows that $\pi(z)$ has the desired properties.

Conversely, assume that $I$ and $S/I$ are $(2,\omega)$-divisible.
To show that $S$ is $(2,\omega)$-divisible, let $x'',x\in S$ satisfy $x''\ll x$.
We need to find $d\in S$ such that $2d\leq x$ and $x''\leq\infty d$.
Choose $x'\in S$ such that $x''\ll x'\ll x$.

Then $\pi(x')\ll\pi(x)$.
As mentioned in \autoref{pgr:2OmegaDef}, $S/I$ is $(3,\omega)$-divisible. Thus, we obtain $\bar{y}\in S/I$ such that
\[
3\bar{y} \leq \pi(x), \andSep \pi(x') \ll \infty\bar{y}.
\]

Let $y\in S$ be a lift of $\bar{y}$.
Using that $\pi$ is a \CuMor{}, we can choose $y'',y'\in S$ such that
\[
\pi(x')\leq\infty\pi(y''), \andSep
y'' \ll y' \ll y.
\]

Since $3\bar{y} \leq \pi(x)$ and $\pi(x')\leq\infty \pi(y'')$, we obtain $u,v\in I$ such that
\[
3y \leq x+u, \andSep
x' \leq \infty y'' + v.
\]

Set $w:=\infty(u+v)$.
Then
\[
3y' \ll x + w, \andSep
y'' \ll y'.
\]

Applying \cite[Proposition~7.8]{ThiVil21arX:NowhereScattered}, we obtain $z\in S$ such that
\[
3z \ll x, \andSep
y'' \ll z+w.
\]

Choose $z'\in S$ such that $z'\ll z$ and $y'' \ll z'+w$. Applying \autoref{prp:DivO5}, we obtain $c\in S$ such that 
\[
2z'+c \leq x \leq 3c, \andSep
z'\ll c.
\]

Using that $y'' \ll z'+w$ and $v\leq w$, we have
\[
x' 
\leq \infty y'' + v
\leq \infty z' + w.
\]

Let us first indicate the completion of the proof under the additional assumption that~$S$ is countably based.
In this case, for every $s\in S$ and $u=2u\in S$, the infimum $s\wedge u$ exists, and the map $S\to S$, $s\mapsto s\wedge u$, preserves addition, order and suprema of increasing sequences by \cite[Theorems~2.4, 2.5]{AntPerRobThi21Edwards}.
Using also at the first step that $x'\leq \infty z' + w$ and $x'\leq x\leq 3c\leq\infty c$, and using at the last step that $z'\leq c$, we have
\[
x' 
\leq (\infty z' + w)\wedge\infty c
= (\infty z'\wedge\infty c) + (w\wedge \infty c)
= \infty z' + \infty (w\wedge c).
\]

Choose $d$ satisfying
\[
x'' \leq \infty z' + \infty d, \andSep
d \ll w\wedge c.
\]

Using that $w\wedge c$ belongs to $I$ and is therefore $(2,\omega)$-divisible, we obtain $e\in S$ such that
\[
2e \leq w\wedge c, \andSep
d \leq \infty e.
\]

Then the element $z'+e$ has the desired properties:
First, we have
\[
2(z'+e)\leq 2z'+(w\wedge c) \leq 2z'+c\leq x.
\]

Further, we get
\[
x''\leq \infty z' + \infty d \leq \infty z'+\infty e = \infty(z'+e).
\]

One can reduce the general situation to the countably based case using the model theoretic methods developed in \cite[Section~5]{ThiVil21DimCu2}.
Let us outline the argument:
By \cite[Proposition~5.3]{ThiVil21DimCu2}, the properties \axiomO{5}-\axiomO{7} each satisfy the L{\"o}wenheim-Skolem condition.
Using similar arguments, one shows that both \axiomO{8} and $(2,\omega)$-divisibility satisfy the L{\"o}wenheim-Skolem condition as well.
Now, let~$S$ be a \CuSgp{} satisfying \axiomO{5}-\axiomO{8}, and let $I\subseteq S$ be an ideal such that~$I$ and~$S/I$ are $(2,\omega)$-divisible.
Given $x'\ll x$ in $S$, using that \axiomO{5}-\axiomO{8} and $(2,\omega)$-divisibility satisfy the L{\"o}wenheim-Skolem condition, and using the analog of \cite[Lemma~3.2]{Thi20arX:grSubhom} in the setting of \CuSgp{s}, we can find a countably based sub-\CuSgp{} $H\subseteq S$ satisfying \axiomO{5}-\axiomO{8}, containing $x'$ and $x$, and such that $H\cap I$ and $H/(H\cap I)$ are both $(2,\omega)$-divisible.
We obtain that $H$ is $(2,\omega)$-divisible, which allows us to find the element with the desired properties in $H\subseteq S$.

We now present the proof of the general case (avoiding the model theoretic methods):
First, applying \axiomO{6} to
\[
x'' \ll x' \leq \infty z' + w,
\]
we obtain $r\in S$ such that
\[
x'' \ll \infty z' + r, \andSep 
r \leq x',w.
\]

Choose $r'\in S$ with
\[
x'' \leq \infty z' + r', \andSep
r'\ll r.
\]

Next, applying \axiomO{6} for
\[
r' \ll r \leq x' \leq x \leq 3c,
\]
we obtain $s_1,s_2,s_3\in S$ such that
\[
r' \ll s_1+s_2+s_3, \andSep
s_j \leq r,c \quad \text{for $j=1,2,3$}.
\]

Choose $s_j'$ such that
\[
r' \ll s_1'+s_2'+s_3', \andSep
s_j' \leq s_j \quad \text{for $j=1,2,3$}.
\]

Note that $s_j\leq c$ for $j=1,2,3$.
Applying \axiomO{7}, we obtain $s\in S$ such that
\[
s_1',s_2',s_3' \leq s \leq c,s_1+s_2+s_3.
\]

Since $s_1,s_2,s_3\leq w=2w$, we deduce that $s\leq w$.
Thus, $s$ belongs to $I$, and
\[
r' \ll s_1'+s_2'+s_3' \leq 3s.
\]

Choose $s'$ satisfying
\[
s'\ll s, \andSep r' \leq 3s' \leq \infty s'.
\]

Since $I$ is $(2,\omega)$-divisible, we obtain $t\in I\subseteq S$ such that
\[
2t \leq s, \andSep  s'\leq \infty t.
\]

We claim that $z'+t$ has the desired properties.
First, using that $2t\leq s \leq c$, we have
\[
2(z'+t)
\leq 2z'+c
\leq x.
\]

On the other hand, using that $r'\leq\infty s'\leq\infty t$, we get
\[
x'' 
\leq \infty z' + r'
\leq \infty z' + \infty t
= \infty (z'+t),
\]
as desired.
\end{proof}

\begin{thm}
\label{prp:GlimmExtension}
The Global Glimm Property passes to hereditary sub-\ca{s} and quotients.

Further, if $I$ is an ideal in a \ca{} $A$, then $A$ has the Global Glimm Property if and only if $I$ and $A/I$ do.
\end{thm}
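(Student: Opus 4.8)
The plan is to transfer everything to the Cuntz semigroup by means of \autoref{prp:GlimmFirstChar}, which identifies the Global Glimm Property of a \ca{} with $(2,\omega)$-divisibility of its Cuntz semigroup. With this reduction in hand, the statement about ideals and quotients becomes an immediate consequence of \autoref{prp:DivExtension}. Indeed, given an ideal $I\subseteq A$, the semigroup $\Cu(I)$ is an ideal of $\Cu(A)$ and $\Cu(A/I)\cong\Cu(A)/\Cu(I)$ by \cite[Proposition~3.3]{CiuRobSan10CuIdealsQuot}; since $\Cu(A)$ satisfies \axiomO{5}-\axiomO{8}, \autoref{prp:DivExtension} shows that $\Cu(A)$ is $(2,\omega)$-divisible if and only if $\Cu(I)$ and $\Cu(A)/\Cu(I)\cong\Cu(A/I)$ are. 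Translating back via \autoref{prp:GlimmFirstChar}, this says exactly that $A$ has the Global Glimm Property if and only if $I$ and $A/I$ do. In particular, the passage to a quotient $A/I$ of an algebra with the Global Glimm Property is the easy (``only if'') half of this equivalence.

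For hereditary sub-\ca{s} the extension result does not apply directly, and I would argue once more at the level of Cuntz semigroups. Let $B\subseteq A$ be a hereditary sub-\ca{} and assume $\Cu(A)$ is $(2,\omega)$-divisible. The plan is to use the standard facts -- all consequences of R\o{}rdam's Lemma (see \cite[Theorem~2.30]{Thi17:CuLectureNotes}) applied inside the hereditary sub-\ca{} $B\otimes\KK\subseteq A\otimes\KK$ -- that the \CuMor{} $\Cu(\iota)\colon\Cu(B)\to\Cu(A)$ induced by the inclusion $\iota\colon B\hookrightarrow A$ is an order-embedding (that is, injective and order-reflecting, while also preserving suprema of increasing sequences and the way-below relation), and that its image is downward-hereditary in $\Cu(A)$, so that $y\leq x$ in $\Cu(A)$ with $x\in\Cu(B)$ forces $y\in\Cu(B)$. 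Granting this, the verification is short: given $x'\ll x$ in $\Cu(B)$, the embedding gives $x'\ll x$ in $\Cu(A)$, so $(2,\omega)$-divisibility of $\Cu(A)$ produces $y\in\Cu(A)$ with $2y\leq x$ and $x'\leq\infty y$. From $y\leq 2y\leq x\in\Cu(B)$ and downward-heredity we obtain $y\in\Cu(B)$; since $\Cu(\iota)$ reflects the order and preserves the suprema defining $\infty y$, the relations $2y\leq x$ and $x'\leq\infty y$ then hold back in $\Cu(B)$. Hence every element of $\Cu(B)$ is $(2,\omega)$-divisible, and \autoref{prp:GlimmFirstChar} yields that $B$ has the Global Glimm Property.

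The substantive content is therefore concentrated in the two properties of $\Cu(\iota)$ for a hereditary sub-\ca{}, which is where R\o{}rdam's Lemma genuinely enters: comparing $a\precsim b$ for $a,b\in(B\otimes\KK)_+$ computed in $A\otimes\KK$ versus in $B\otimes\KK$ requires producing, from an element implementing the subequivalence in the ambient algebra, one supported in $\overline{b(A\otimes\KK)b}\subseteq B\otimes\KK$. Beyond this (well-known) bookkeeping I expect no real obstacle; the only points deserving care are that downward-heredity of the image must be invoked for $y$, which satisfies $y\leq x$, rather than for $x'$, and that the suprema of the increasing sequences $(ny)_n$ are preserved under the embedding, so that $\infty y$ is computed consistently in $\Cu(B)$ and in $\Cu(A)$.
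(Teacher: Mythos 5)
Your treatment of ideals and quotients is exactly the paper's: both reduce via \autoref{prp:GlimmFirstChar} to $(2,\omega)$-divisibility and then quote \autoref{prp:DivExtension} together with $\Cu(A/I)\cong\Cu(A)/\Cu(I)$. For hereditary sub-\ca{s} you take a genuinely different (and correct) route. The paper passes to the ideal $I=\overline{\linSpan}ABA$ generated by $B$, invokes the fact that the inclusion $B\to I$ induces an \emph{isomorphism} $\Cu(B)\to\Cu(I)$, and then reuses the ideal case; the hereditary case is thus made a formal corollary of the extension result. You instead work with the map $\Cu(B)\to\Cu(A)$ directly, using that it is an order-embedding whose image is downward-hereditary and closed under the relevant suprema, and transfer $(2,\omega)$-divisibility element by element. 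Both arguments rest on standard consequences of R\o{}rdam's Lemma for hereditary inclusions; yours avoids the (slightly stronger) input that $\Cu(B)\cong\Cu(I)$ for a full hereditary sub-\ca{} $B\subseteq I$, at the cost of having to verify order-reflection, downward-heredity of the image (correctly applied to $y$ via $y\leq 2y\leq x$), and preservation of the suprema defining $\infty y$ -- all of which you flag and all of which do hold. The one point worth making explicit if you write this up in full is that downward-heredity of the image is obtained by expressing $y=\sup_\varepsilon[(a-\varepsilon)_+]$ with each $[(a-\varepsilon)_+]$ realized in $\overline{b(A\otimes\KK)b}\subseteq B\otimes\KK$ and then pulling the increasing sequence back along the order-embedding; this is routine but is the step where R\o{}rdam's Lemma actually gets used.
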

\begin{proof}
Let $A$ be a \ca, and let $I\subseteq A$ be an ideal.
Then the inclusion $I\to A$ induces an order-embedding $\Cu(I)\to\Cu(A)$ that identifies $\Cu(I)$ with an ideal in $\Cu(A)$ such that $\Cu(A/I)$ is naturally isomorphic to $\Cu(A)/\Cu(I)$;
see \cite[Proposition~3.3]{CiuRobSan10CuIdealsQuot} and \cite[Section~5.1]{AntPerThi18TensorProdCu}.

It follows from \autoref{prp:DivExtension} that $\Cu(A)$ is $(2,\omega)$-divisible if and only if $\Cu(I)$ and $\Cu(A/I)$ are.
Applying \autoref{prp:GlimmFirstChar}, we obtain that $A$ has the Global Glimm Property if and only if $I$ and $A/I$ do.

Lastly, assume that $A$ has the Global Glimm Property, and let $B\subseteq A$ be a hereditary sub-\ca{}.
Let $I=\overline{\linSpan}ABA$ be the ideal generated by $B$.
Then the inclusion $B\to I$ induces an isomorphism $\Cu(B)\to\Cu(I)$, and arguing as above we deduce that $B$ has the Global Glimm Property.
\end{proof}

\begin{prp}
\label{prp:Limits}
The Global Glimm Property passes to inductive limits.
\end{prp}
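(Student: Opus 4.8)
The plan is to reduce the statement to a purely order-theoretic fact about inductive limits in $\CatCu$, exploiting our characterization of the Global Glimm Property. By \autoref{prp:GlimmFirstChar}, a \ca{} has the Global Glimm Property if and only if its Cuntz semigroup is $(2,\omega)$-divisible. Given an inductive system $(A_i,\psi_{ij})$ of \ca{s} with limit $A=\varinjlim A_i$ in which each $A_i$ has the Global Glimm Property, continuity of the functor $\Cu$ (see \cite[Corollary~3.2.9]{AntPerThi18TensorProdCu}) yields $\Cu(A)\cong\varinjlim\Cu(A_i)$ in $\CatCu$, with limit \CuMor{s} induced by the $\psi_{ij}$. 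Thus it suffices to prove the following semigroup statement: if $S=\varinjlim(S_i,\varphi_{ij})$ is an inductive limit in $\CatCu$ and each $S_i$ is $(2,\omega)$-divisible, then $S$ is $(2,\omega)$-divisible. Note that no appeal to \axiomO{5}-\axiomO{8} is needed for this particular permanence property.

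To prove this, I would fix $x\in S$ and an arbitrary $x''\ll x$, and produce $y\in S$ with $2y\leq x$ and $x''\leq\infty y$. The first step is to lift the pair $x''\ll x$ into a single stage of the system: using that every element of $S$ is the supremum of an increasing sequence of elements of the form $\varphi_{i\infty}(a)$ and that \CuMor{s} preserve $\ll$ and suprema of increasing sequences, I can find an index $i$ and elements $a',a\in S_i$ with $a'\ll a$, such that $x''\leq\varphi_{i\infty}(a')$ and $\varphi_{i\infty}(a)\leq x$. Applying $(2,\omega)$-divisibility to $a'\ll a$ inside $S_i$ gives $z\in S_i$ with $2z\leq a$ and $a'\leq\infty z$. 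I then set $y:=\varphi_{i\infty}(z)$. Since $\varphi_{i\infty}$ is a \CuMor{}, it preserves addition, order, and the suprema defining $\infty(\cdot)$, whence $2y=\varphi_{i\infty}(2z)\leq\varphi_{i\infty}(a)\leq x$ and $x''\leq\varphi_{i\infty}(a')\leq\varphi_{i\infty}(\infty z)=\infty y$, as desired.

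The one step that requires care — and which I regard as the main obstacle — is the lifting of $x''\ll x$ to a genuine $\ll$-pair $a'\ll a$ in a single $S_i$ together with the displayed comparisons. Here I would first choose an intermediate $x'$ with $x''\ll x'\ll x$, write $x$ as the supremum of an increasing sequence $(\varphi_{i_n\infty}(b_n))_n$ coming from the system, and use $x'\ll x$ to obtain $n$ with $x'\leq\varphi_{i_n\infty}(b_n)$; this fixes $i:=i_n$ and $a:=b_n$ with $\varphi_{i\infty}(a)\leq x$. I would then factor $a$ as the supremum of a $\ll$-increasing sequence in $S_i$ via \axiomO{2}, apply $\varphi_{i\infty}$, and use $x''\ll x'\leq\varphi_{i\infty}(a)$ together with the fact that $\varphi_{i\infty}$ preserves suprema of increasing sequences to select a term $a'\ll a$ of that sequence with $x''\leq\varphi_{i\infty}(a')$. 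This relies only on the defining structure of inductive limits in $\CatCu$ (existence of colimits, see \cite{AntPerThi20CuntzUltraproducts}, and the standard description of the way-below relation in the limit from \cite{AntPerThi18TensorProdCu}); in particular the argument is uniform in the directed index set and imposes no additional axioms on the $S_i$.
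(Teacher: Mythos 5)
Your proposal is correct and follows essentially the same route as the paper: both reduce the statement via \autoref{prp:GlimmFirstChar} to showing that $(2,\omega)$-divisibility passes to inductive limits in $\CatCu$, lift a way-below pair to a single stage of the system, apply divisibility there, and push the witness forward along the limit \CuMor{}. The only cosmetic difference is that the paper outsources the lifting step to a cited lemma (giving $x'\ll\Cu(\varphi_\lambda)(x_\lambda)\ll x$ directly), whereas you sketch a proof of it from the colimit structure; your sketch is sound.
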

\begin{proof}
 Let $((A_\lambda)_{\lambda\in\Lambda},(\varphi_{\mu,\lambda})_{\lambda\leq\mu \text{ in } \Lambda})$ be an inductive system of \ca{s} satisfying the Global Glimm Property, and let $A$ be its inductive limit with limit morphisms $\varphi_\lambda\colon A_\lambda \to A$ for each $\lambda\in\Lambda$.
 
 It follows from \cite[Lemma~3.8]{ThiVil22DimCu} and \cite[Corollary~3.2.9]{AntPerThi18TensorProdCu} that the Cuntz semigroup $\Cu (A)$ together with the \CuMor{s} $\Cu (\varphi_\lambda)$ satisfy the following property: For every pair $x'\ll x$ in $\Cu (A)$, there exist $\lambda\in\Lambda$  and an element $x_\lambda\in \Cu (A_\lambda )$ such that  $x'\ll\Cu (\varphi_\lambda )(x_\lambda)\ll x$.

Thus, given $x\in \Cu (A)$ and $x'\in\Cu (A)$ such that $x'\ll x$, let $\lambda\in\Lambda$ and $x_\lambda\in \Cu (A_\lambda )$ be such that $x'\ll\Cu (\varphi_\lambda )(x_\lambda)\ll x$.

Since $\Cu (\varphi_\lambda )$ is a \CuMor{}, one finds $x_\lambda '\ll x_\lambda$ in $\Cu (A_\lambda )$ such that $x'\ll \Cu (\varphi_\lambda )(x_\lambda ')$. Then, using that $A_\lambda $ has the Global Glimm Property, it follows from \autoref{prp:GlimmFirstChar} that there exists $y_\lambda\in \Cu (A_\lambda )$ satisfying 
\[
2y_\lambda \leq x_\lambda , \andSep x_\lambda '\leq \infty y_\lambda 
\]
in $\Cu (A_\lambda )$.

Therefore, the element $y:=\Cu (\varphi_\lambda )(y_\lambda )$ is such that $2y\leq x$ and $x'\leq \infty y$. This shows that $\Cu (A)$ is $(2,\omega )$-divisible, which by \autoref{prp:GlimmFirstChar} is equivalent to $A$ having the Global Glimm Property.
\end{proof}

\begin{prp}
\label{prp:Morita}
The Global Glimm Property is invariant under Morita equivalence.
In particular, a \ca{} $A$ has Global Glimm Property if and only if its stabilization $A\otimes\KK$ does.
\end{prp}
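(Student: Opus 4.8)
The strategy is to transport everything to the level of Cuntz semigroups using \autoref{prp:GlimmFirstChar} and then to invoke the Morita invariance of the functor $\Cu$. By \autoref{prp:GlimmFirstChar}, a \ca{} has the Global Glimm Property if and only if its Cuntz semigroup is $(2,\omega)$-divisible. Now $(2,\omega)$-divisibility is defined purely in terms of the order, the addition, and the way-below relation of a \CuSgp{}, and all of these are preserved by any isomorphism in the category $\CatCu$. Consequently, if $S\cong T$ are isomorphic \CuSgp{s}, then $S$ is $(2,\omega)$-divisible if and only if $T$ is. Thus it suffices to produce, from a Morita equivalence, an isomorphism of the associated Cuntz semigroups.

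For the stabilization statement this is immediate from the definition: since $\Cu(A)=(A\otimes\KK)_+/\sim$ and $\KK\otimes\KK\cong\KK$, we have
\[
\Cu(A\otimes\KK)=\big((A\otimes\KK)\otimes\KK\big)_+/\sim\;\cong\;(A\otimes\KK)_+/\sim\;=\Cu(A).
\]
By the first paragraph and \autoref{prp:GlimmFirstChar}, $A$ has the Global Glimm Property if and only if $A\otimes\KK$ does. For the general Morita equivalence statement, I would quote the fundamental result of \cite{CowEllIva08CuInv} that the Cuntz semigroup is an invariant for Morita equivalence: if $A$ and $B$ are Morita equivalent \ca{s}, then $\Cu(A)\cong\Cu(B)$ in $\CatCu$. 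Combining this with the first paragraph yields that $A$ has the Global Glimm Property if and only if $B$ does.

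The only substantive input is the Morita invariance of $\Cu(-)$, which I would cite rather than reprove; the remainder is the formal observation that $(2,\omega)$-divisibility is an isomorphism invariant of \CuSgp{s}. I therefore expect no genuine difficulty: once \autoref{prp:GlimmFirstChar} identifies the Global Glimm Property with an intrinsic property of $\Cu(A)$, the proposition follows at once from the functoriality and Morita invariance of the Cuntz semigroup. One could also deduce the stabilization case as the special instance $B=A\otimes\KK$, since $A$ and $A\otimes\KK$ are always Morita equivalent.
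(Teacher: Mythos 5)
Your proposal is correct and follows exactly the paper's (one-line) argument: reduce via \autoref{prp:GlimmFirstChar} to $(2,\omega)$-divisibility of the Cuntz semigroup, which is an isomorphism invariant of \CuSgp{s}, and then invoke the fact that Morita equivalent \ca{s} have isomorphic Cuntz semigroups. The extra remark that the stabilization case follows either directly from $\KK\otimes\KK\cong\KK$ or as a special case of Morita equivalence is a harmless elaboration of the same idea.
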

\begin{proof}
This follows from \autoref{prp:GlimmFirstChar} using that Morita equivalent \ca{s} have isomorphic Cuntz semigroups.
\end{proof}

\section{Ideal-filtered Cuntz semigroups}
\label{sec:IdealFiltered}

In this section, we introduce the notion of ideal-filteredness for \CuSgp{s};
see \autoref{dfn:IdealFiltered}. 
We show in \autoref{prp:DivImplIdealFiltered} that $(2,\omega)$-divisible Cuntz semigroups are always ideal-filtered.
Conversely, we will see in \autoref{prp:MainThmCu} that ideal-filteredness together with property~(V) can be used to characterize when a weakly $(2,\omega)$-divisible \CuSgp{} is $(2,\omega)$-divisible.

We provide a useful characterization of ideal-filteredness (\autoref{prp:CharIdealFilteredO6O7}), which we use to show that a \CuSgp{} is ideal-filtered whenever it satisfies \axiomO{6} and Riesz interpolation (\autoref{prp:InterpolIdealFiltered}), if it satisfies \axiomO{7} and is zero-dimensional (\autoref{prp:Dim0IdealFiltered}), or if it satisfies \axiomO{5}-\axiomO{7} and the tensor product with $\{0,\infty\}$ is algebraic (\autoref{prp:LatAlgebraicIdealFiltered}).

It follows that the Cuntz semigroup of a \ca{} $A$ is ideal-filtered whenever~$A$ has stable rank one (\autoref{prp:IdealFilteredSR1}), real rank zero (\autoref{prp:IdealFilteredRR0}), or if $A$ is separable and has topological dimension zero (\autoref{prp:IdealFilteredTopDimZero}), which includes the case that $A$ has the ideal property.
\autoref{exa:NotIdealFiltered} shows that there exist \ca{s} whose Cuntz semigroups are not ideal-filtered.

Recall that a subset $F$ of a partially ordered set is said to be \emph{filtered} if for every $x,y\in F$ there exists $z\in F$ satisfying $z\leq x,y$.

\begin{dfn}
\label{dfn:IdealFiltered}
We say that a \CuSgp{} $S$ is \emph{ideal-filtered} if for all $v',v,x,y$ in~$S$ satisfying
\[
v' \ll v \ll \infty x, \infty y
\]
there exists $z\in S$ such that
\[
v' \ll \infty z, \andSep z \ll x,y.
\]
\end{dfn}

Recall that an element $x$ in a \CuSgp{} is said to be \emph{compact} if $x\ll x$;
further, $x$ is \emph{full} if it is not contained in a proper ideal.

\begin{rmk}
\label{rmk:IdealFiltered}
Let $S$ be an ideal-filtered \CuSgp, and let $v',v\in S$ satisfy $v'\ll v\leq \infty v'$. For example, one can take $v'=v$ for any compact element $v$.

Consider the set $F:=\{ x \in S : v\leq \infty x\}$ of elements that generate an ideal containing $v$.
Then $F$ is filtered, that is, given $x,y\in F$ there exists $z\in F$ with $z\leq x,y$.

In particular, if $S$ contains a compact full element, then the set of full elements is filtered.
\end{rmk}

\begin{exa}
\label{exa:NotIdealFiltered}
Set $A:=C(S^2)$, the continuous functions on the two-sphere.
Then $\Cu(A)$ is the disjoint union of the submonoid $\Cu(A)_c$ of compact elements and the subsemigroup of noncompact elements.
Further, $\Cu(A)_c$ is isomorphic to the Murray-von Neumann semigroup $V(A)$, which can be identified with a submonoid of $\ZZ\times\ZZ$ as
\[
V(A) \cong \big\{ (0,0) \big\} \cup \big\{ (n,m) : m>0 \big\} \subseteq \ZZ\times\ZZ.
\]
We refer to \cite[Example~6.7]{Thi20RksOps} and \cite[Theorem~1.2]{Rob13CuSpDim2} for details.

Set $x=(0,1)\in V(A)$ and $y=(1,1)\in V(A)$.
Then $x$ and $y$ are compact, full elements in $\Cu(A)$.
However, there exists no full element below $x$ and $y$.
Thus, $\Cu(A)$ is not ideal-filtered.
\end{exa}

It will be useful to consider the following relation:

\begin{ntn}
Let $S$ be a \CuSgp{}.
Given $x,y\in S$, we write $x\llideal y$ if there exists $y'\in S$ such that $x\leq\infty y'$ and $y'\ll y$.
\end{ntn}

The following result summarizes the basic properties of the relation $\llideal$.
We omit the straightforward proofs.

\begin{lma}
\label{prp:FiltRelTauto}
Let $S$ be a \CuSgp{}.
Then the following statements hold:
\begin{enumerate}
\item
If $u\leq\infty x$, $x\llideal y$ and $y\leq\infty z$, then $u\llideal z$.
\item
If $x\llideal y$, then there exists $y'\in S$ such that $x\llideal y'\ll y$.
\end{enumerate}
\end{lma}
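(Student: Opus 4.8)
The plan is to unwind the definition of $\llideal$ and reduce both statements to two elementary facts about \CuSgp{s}. Recall that $x\llideal y$ means there is some $y'$ with $x\leq\infty y'$ and $y'\ll y$. The first fact I will use is that the way-below relation interpolates: if $w\ll y$, then there is $y'$ with $w\ll y'\ll y$. This follows from \axiomO{2}: writing $y=\sup_n y_n$ for a $\ll$-increasing sequence, $w\ll y$ gives $w\leq y_m$ for some $m$, and then $y':=y_{m+1}$ works, since $w\leq y_m\ll y_{m+1}=y'\ll y$. The second fact is the rewriting $\infty z=\sup_k\infty z_k$, where $z=\sup_k z_k$ is a $\ll$-increasing sequence as in \axiomO{2}; this is a double-supremum computation justified by \axiomO{4}, together with the idempotency identities $\infty(mw)=\infty w$ and $\infty(\infty w)=\infty w$.

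For part~(2) I would unpack $x\llideal y$ to obtain $w$ with $x\leq\infty w$ and $w\ll y$, then use interpolation to find $y'$ with $w\ll y'\ll y$. Now $w$ itself witnesses $x\llideal y'$, since $x\leq\infty w$ and $w\ll y'$, while $y'\ll y$ by construction; this is exactly the claim.

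For part~(1) I would first unpack $x\llideal y$ to get $y'$ with $x\leq\infty y'$ and $y'\ll y$. Combining $y'\ll y$ with $y\leq\infty z$ gives $y'\ll\infty z$ (the way-below relation is preserved when the upper endpoint is enlarged). The crucial step is then to apply the rewriting $\infty z=\sup_k\infty z_k$ with $z_k\ll z$: since $y'\ll\infty z$ and the latter is a supremum of an increasing sequence, the definition of $\ll$ yields an index $m$ with $y'\leq\infty z_m$, and I set $z':=z_m$, so that $z'\ll z$. It then remains to chase inequalities: from $y'\leq\infty z'$ and monotonicity of $\infty$ we get $x\leq\infty y'\leq\infty(\infty z')=\infty z'$, and hence $u\leq\infty x\leq\infty(\infty z')=\infty z'$ with $z'\ll z$, i.e.\ $u\llideal z$.

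I do not expect a genuine obstacle here; the only point requiring care is the rewriting $\infty z=\sup_k\infty z_k$ with each $z_k\ll z$. A naive attempt using $\infty z=\sup_n nz$ fails, because the terms $nz$ are not way below $z$ and so the way-below relation cannot extract a single $z'\ll z$. Passing to the sequence $(\infty z_k)_k$ built from a $\ll$-increasing sequence converging to $z$ is precisely what makes the argument go through.
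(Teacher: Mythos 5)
Your proof is correct. The paper omits the argument entirely (``We omit the straightforward proofs''), and what you give is exactly the intended routine verification: interpolating in the $\ll$-relation for part~(2), and for part~(1) using $\infty z=\sup_k\infty z_k$ for a $\ll$-increasing sequence $(z_k)_k$ with supremum $z$ to extract a single witness $z'\ll z$ --- including the correct observation that the naive decomposition $\infty z=\sup_n nz$ would not suffice.
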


\begin{prp}
\label{prp:RephraseIdealFiltered}
A \CuSgp{} $S$ is ideal-filtered if and only if for all $v',v,x,y$ in~$S$ satisfying
\[
v' \ll v \llideal x, y
\]
there exists $z\in S$ such that
\[
v' \llideal z \ll x,y.
\]
\end{prp}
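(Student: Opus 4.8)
The plan is to prove the two implications separately, organized around the elementary chain
\[
v \ll \infty x \ \Longrightarrow\ v \llideal x \ \Longrightarrow\ v \leq \infty x .
\]
The second implication is immediate from the definition of $\llideal$ (a witness $x'\ll x$ with $v\leq\infty x'$ gives $v\leq\infty x'\leq\infty x$). For the first, I would write $x=\sup_m x_m$ for a $\ll$-increasing sequence (\axiomO{2}), so that $\infty x=\sup_m \infty x_m$ with $(\infty x_m)_m$ increasing; then $v\ll\infty x$ forces $v\leq\infty x_{m_0}$ for some $m_0$, and since $x_{m_0}\ll x$ this says exactly $v\llideal x$. The content of the whole statement is that the hypotheses and conclusions of the two conditions differ only by replacing $\ll\infty(\cdot)$ with the weaker $\llideal$, i.e.\ by relaxing a way-below relation to a mere inequality at the level of ideal-supports, and the task is to reconcile this using interpolation.

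For the direction ``rephrased $\Rightarrow$ ideal-filtered'', suppose $v'\ll v\ll\infty x,\infty y$. By the first implication above, $v\llideal x$ and $v\llideal y$. The plan is to first interpolate: choose $v''$ with $v'\ll v''\ll v$ and apply the rephrased condition to the tuple $(v'',v,x,y)$, which is legitimate since $v''\ll v\llideal x,y$. This yields $z$ with $v''\llideal z\ll x,y$, say $v''\leq\infty z'$ with $z'\ll z$. Then $z'\ll z\ll x,y$ gives $z'\ll x,y$, while $v'\ll v''\leq\infty z'$ upgrades the inequality to $v'\ll\infty z'$, so $z'$ witnesses ideal-filteredness. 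The only subtlety is the preliminary interpolation $v'\ll v''\ll v$, which is precisely what converts the ``$\leq$'' produced by $\llideal$ back into the ``$\ll$'' demanded by \autoref{dfn:IdealFiltered}.

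The substantive direction is ``ideal-filtered $\Rightarrow$ rephrased''. Assume $S$ is ideal-filtered and let $v'\ll v\llideal x,y$; unfolding $\llideal$, fix $p\ll x$ and $q\ll y$ with $v\leq\infty p$ and $v\leq\infty q$. I expect the genuinely delicate point to be that \autoref{dfn:IdealFiltered} requires a \emph{single} middle element lying way-below \emph{both} $\infty p$ and $\infty q$, whereas $\llideal$ only supplies $v\leq\infty p,\infty q$; working from the separated facts $v'\ll\infty p$ and $v'\ll\infty q$ alone, no common way-below element need exist. The key observation that unblocks this is that the \emph{same} element $v$ lies below both idempotents, so an \axiomO{2}-approximant of $v$ serves as the common interpolant: writing $v=\sup_k v_k$ with $v_k\ll v_{k+1}$, pick $k_0$ with $v'\leq v_{k_0}$ (possible as $v'\ll v$) and set $b:=v_{k_0+1}$. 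Then $v'\leq v_{k_0}\ll b$ gives $v'\ll b$, and $b\ll v\leq\infty p,\infty q$ gives $b\ll\infty p$ and $b\ll\infty q$.

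It then remains to feed this into ideal-filteredness: applying \autoref{dfn:IdealFiltered} to $v'\ll b\ll\infty p,\infty q$ produces $z$ with $v'\ll\infty z$ and $z\ll p,q$. Since $p\ll x$ and $q\ll y$ we get $z\ll x,y$, and since $v'\ll\infty z$ the first implication of the opening chain gives $v'\llideal z$. Thus $z$ is the required element, so that $v'\llideal z\ll x,y$, completing the proof. (\autoref{prp:FiltRelTauto} could be used to streamline the passages between $\llideal$ and $\ll\infty(\cdot)$, but the argument only truly needs \axiomO{2} together with the definition of ideal-filteredness.)
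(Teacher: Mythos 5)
Your proposal is correct and follows essentially the same route as the paper: both directions proceed by interpolating a middle element strictly way-below $v$ (your \axiomO{2}-approximant $b$, the paper's $v''$ with $v'\ll v''\ll v$) and then translating between $v'\ll\infty(\cdot)$ and $v'\llideal(\cdot)$, which you rightly observe needs only \axiomO{2}. The only cosmetic difference is that in the substantive direction you apply \autoref{dfn:IdealFiltered} to the witnesses $p\ll x$, $q\ll y$ rather than to $x,y$ themselves; both choices work.
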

\begin{proof}
Assume first that $S$ is ideal-filtered, and let $v',v,x,y\in S$ be as in the statement.

Then, let $v''\in S$ satisfy $v'\ll v''\ll v$, and note that one has $v'\ll v''\ll \infty x, \infty y$. Thus, applying ideal-filteredness, there exists $z\in S$ such that 
\[
 v'\ll \infty z,\andSep 
 z\ll x,y,
\]
as desired.

Conversely, let $v'\ll v\ll \infty x,\infty y$ in $S$ and assume that the stated property is satisfied. As above, take $v''\in S$ with $v'\ll v''\ll v$.

By assumption, we find $z\in S$ such that
\[
v'' \llideal z \ll x,y
\]
and, since $v'\ll v''\leq \infty z$, it follows that $S$ is ideal-filtered.
\end{proof}

\begin{prp}
\label{prp:CharIdealFiltered}
Let $S$ be a \CuSgp{}.
Then $S$ is ideal-filtered if and only if~$S$ satisfies the following statements:
\begin{enumerate}
\item
For all $v',v,x$ in~$S$ satisfying
\[
v' \ll v \llideal x
\]
there exists $z\in S$ such that
\[
v' \llideal z \llideal v, \andSep z\ll x.
\]
\item
For all $x',x,y$ in~$S$ satisfying
\[
x'\ll x\ll 2y
\]
there exists $z\in S$ such that
\[
x' \llideal z\ll x,y.
\]
\end{enumerate}
\end{prp}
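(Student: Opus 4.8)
The plan is to prove each implication through the reformulation of ideal-filteredness in terms of the relation $\llideal$ given in \autoref{prp:RephraseIdealFiltered}, together with the manipulations collected in \autoref{prp:FiltRelTauto}. Throughout I will use two elementary facts that follow from \axiomO{1}, \axiomO{2} and \axiomO{4}: first, if $a\ll\infty b$ then $a\llideal b$ (writing $b$ as the supremum of a $\ll$-increasing sequence $(b_n)_n$, one has $\infty b=\sup_n\infty b_n$, so $a\leq\infty b_{n_0}$ for some $n_0$); and second, $\llideal$ is transitive, in the sense that $a\llideal b$ and $b\llideal c$ imply $a\llideal c$, which is immediate from \autoref{prp:FiltRelTauto}(1). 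For the forward implication, assume $S$ is ideal-filtered. To verify~(2), start from $x'\ll x\ll 2y$ and interpolate $x'\ll x^\natural\ll x$; the point of passing to $x^\natural$ is that the hypotheses of \autoref{dfn:IdealFiltered} become automatic, since $x^\natural\ll x\leq\infty x$ and $x^\natural\ll x\ll 2y\leq\infty y$ give $x^\natural\ll\infty x,\infty y$. Ideal-filteredness then produces $z$ with $x'\ll\infty z$ and $z\ll x,y$, and the first fact turns $x'\ll\infty z$ into $x'\llideal z$. For~(1), start from $v'\ll v\llideal x$, so $v\leq\infty x''$ for some $x''\ll x$; choosing $v'\ll a\ll v^\natural\ll v$ one checks $a\ll v^\natural\leq v\leq\infty x''$, whence $a\ll\infty x$, and $a\ll v^\natural\leq\infty v^\natural$, whence $a\ll\infty v^\natural$. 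Feeding $v'\ll a\ll\infty x,\infty v^\natural$ into the definition yields $z$ with $v'\ll\infty z$ and $z\ll x,v^\natural$, so $z\ll x$, $v'\llideal z$, and $z\ll v^\natural\ll v$ gives $z\llideal v$, which is~(1).

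For the converse, assume~(1) and~(2); by \autoref{prp:RephraseIdealFiltered} it suffices, given $v'\ll v\llideal x,y$, to produce $z$ with $v'\llideal z\ll x,y$. First I would apply~(1) with the single target $x$ to obtain $w$ with $v'\llideal w\llideal v$ and $w\ll x$; since $w\llideal v\llideal y$, transitivity of $\llideal$ gives $w\llideal y$ as well. Thus the two ``ideal'' targets of the rephrased condition have been replaced by one genuine target $w\ll x$ together with the weaker relation $w\llideal y$. The remaining task is to upgrade $w\llideal y$ to a genuine way-below relation while keeping $\ll x$, and this is where~(2) enters: unfolding $w\llideal y$ and shrinking, one finds $w'\ll w$ with $v'\llideal w'$ and $w'\leq\infty y''$ for some $y''\ll y$, hence $w'\leq k_0y''$ for some $k_0$, and choosing $n$ with $k_0\leq 2^n$ yields $w'\ll x$ and $w'\ll 2^ny$ (using \axiomO{3} for $2^ny''\ll 2^ny$).

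The heart of the converse is then an induction establishing the ``$2^n$-fold'' statement $Q(n)$: whenever $a'\ll a\ll x$ and $a\ll 2^ny$, there exists $z$ with $a'\llideal z\ll x,y$. The base case $Q(0)$ is immediate (take $z=a$). For the inductive step I would write $2^ny=2\cdot(2^{n-1}y)$ and apply~(2) to $a'\ll a\ll 2(2^{n-1}y)$ to halve the $y$-factor, obtaining $z_0$ with $a'\llideal z_0\ll a,2^{n-1}y$; after shrinking to $z_0'\ll z_0$ with $a'\llideal z_0'$ (so that $z_0'\ll x$ and $z_0'\ll 2^{n-1}y$), the inductive hypothesis $Q(n-1)$ applied to $z_0'\ll z_0$ yields $z\ll x,y$, and transitivity of $\llideal$ propagates $a'\llideal z$. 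Applying $Q(n)$ to a pair $(w'',w')$ with $w''\ll w'$ and $v'\llideal w''$ then produces the required $z$, which completes the argument.

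The main obstacle is organizing the converse: a naive attempt to apply~(1) separately to the targets $x$ and $y$ only oscillates between the configurations ``$\ll x$, $\llideal y$'' and ``$\llideal x$, $\ll y$'' and never delivers simultaneous way-below relations. The essential idea is that~(1) is invoked exactly once, to secure a genuine $\ll x$, after which the information $w\llideal y$ becomes genuinely finite (it places $w'$ below a finite multiple $2^ny$), and this is precisely what makes the halving supplied by~(2) terminate after $n$ steps. A secondary technical point, already visible in the forward direction, is that \autoref{dfn:IdealFiltered} requires the source to be $\ll\infty$-dominated by each target; I arrange this uniformly by replacing sources with interpolants, which are automatically way-below the relevant infinite multiples.
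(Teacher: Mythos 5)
Your proof is correct and follows essentially the same route as the paper: the forward direction by interpolating and feeding the resulting $\ll\infty$-configuration into (the rephrased form of) \autoref{dfn:IdealFiltered}, and the converse by invoking~(1) once to secure a genuine way-below to one target and then inducting on $2^n$ via repeated halving with~(2). The only differences are cosmetic — your induction statement $Q(n)$ carries the outer target $x$ along, whereas the paper's Claim recovers $z\ll x$ at the end through the chain $z\ll w'\ll w\ll x$.
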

\begin{proof}
We first assume that $S$ is ideal-filtered.
To verify~(1), let $v',v,x$ in~$S$ satisfy $v' \ll v \llideal x$.
Choose $w\in S$ such that $v'\ll w\ll v$.
Since $\ll$ is stronger than $\llideal$, we have
\[
v'\ll w \llideal x,v.
\]

Applying \autoref{prp:RephraseIdealFiltered}, we obtain $z\in S$ such that
\[
v'\llideal z\ll x,v,
\]
which shows that $z$ has the desired properties.

To verify~(2), let $x',x,y$ in~$S$ satisfy $x'\ll x\ll 2y$.
Choose $w\in S$ such that $x'\ll w\ll x$.
Then
\[
x'\ll w \llideal x,y.
\]

Applying \autoref{prp:RephraseIdealFiltered} again, we obtain $z\in S$ such that
\[
x'\llideal z\ll x,y.
\]

Conversely, assume that $S$ satisfies~(1) and~(2).
We first prove the following strengthening of~(2):

\textbf{Claim: }\emph{For all $x',x,y\in S$ and $n\geq 1$ satisfying
\[
x'\ll x\ll 2^n y
\]
there exists $z\in S$ such that
\[
x' \llideal z\ll x,y.
\]}

We prove the claim by induction over $n$.
The case $n=1$ holds by assumption. Thus, assume that the claim holds for some $n$, and let $x',x,y\in S$ satisfy $x'\ll x\ll 2^{n+1} y$.
Then
\[
x'\ll x\ll 2^{n+1} y = 2(2^n y),
\]
 and applying~(2) we obtain $z_0\in S$ such that
\[
x'\llideal z_0 \ll x,2^ny.
\]

Using \autoref{prp:FiltRelTauto} (2), choose $z_0'\in S$ such that $x'\llideal z_0'\ll z_0$.
By the induction assumption applied at $z_0'\ll z_0\ll 2^ny$, we obtain $z\in S$ such that
\[
z_0'\llideal z \ll z_0,y
\]
and, consequently,
\[
x' \llideal z_0' \llideal z, \quad
z\ll z_0\ll x, \andSep
z\ll y,
\]
as desired.
This proves the claim.

To verify that $S$ is ideal-filtered, let $v',v,x,y\in S$ satisfy $v' \ll v \llideal x, y$.
Applying~(1) to $v'\ll v\llideal x$, we obtain $w\in S$ such that
\[
v' \llideal w \llideal v, \andSep w\ll x.
\]

Choose $w'',w'\in S$ such that
\[
v'\llideal w''\ll w'\ll w.
\]

We have $w'\ll w\llideal v\llideal y$, and so $w'\ll\infty y$, which allows us to choose $n\geq 1$ such that $w'\ll 2^ny$.
Applying the Claim to $w''\ll w'\ll 2^ny$, we obtain $z\in S$ such that
\[
w'' \llideal z\ll w',y,
\]
which implies
\[
v' \llideal w''\llideal z, \quad
z \ll w' \ll w\ll x, \andSep
z \ll y,
\]
as desired.
\end{proof}

Next, we show that statement~(1) in \autoref{prp:CharIdealFiltered} is always satisfied for \CuSgp{s} satisfying \axiomO{6} and \axiomO{7}.

\begin{lma}
\label{prp:Cond1Automatic}
Let $S$ be a \CuSgp{} satisfying \axiomO{6} and \axiomO{7}, and let $v',v,x\in S$ satisfy
\[
v' \ll v \leq \infty x.
\]

Then there exists $z\in S$ such that
\[
v' \llideal z \llideal v, \andSep z\ll x.
\]
\end{lma}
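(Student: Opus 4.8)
The plan is to produce $z$ in two movements: first decompose $v'$ into finitely many pieces each lying simultaneously below $x$ and below a fixed element way-below $v$, and then recombine those pieces into a \emph{single} element $z$ by invoking \axiomO{7} with $x$ itself as the common upper bound. Choosing $x$ (rather than $v$, or the sum of the pieces) as that common bound is the whole point: it forces $z\ll x$, while the bound $z\ll(\text{sum of pieces})$ keeps $z$ large enough to recover $v'$ through $\infty z$, and the fact that the pieces are way-below an element $\ll v$ confines $z$ to the ideal generated by $v$.

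Concretely, I would first interpolate once to obtain $u\in S$ with $v'\ll u\ll v$. Since $u\ll v\leq\infty x=\sup_m mx$, there is $m\geq 1$ with $u\leq mx$. A routine induction from \axiomO{6} (in the same spirit that \autoref{prp:RefO7} refines \axiomO{7}) gives a multi-term decomposition: from $v'\ll u\leq x+\cdots+x$ ($m$ summands) one obtains $w_1,\ldots,w_m\in S$ with $w_j\ll u$ and $w_j\ll x$ for each $j$, and $v'\ll w_1+\cdots+w_m$. Writing $w_1+\cdots+w_m$ as a supremum of $\ll$-increasing approximations via \axiomO{4}, I would then pick $w_j'\ll w_j$ with $v'\leq w_1'+\cdots+w_m'$. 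As each $w_j'\ll w_j\leq x$, applying \autoref{prp:RefO7} with common upper bound $x$ yields $z\in S$ with
\[
w_1',\ldots,w_m'\ll z\ll x,\ w_1+\cdots+w_m.
\]

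It then remains to check the three required relations. We have $z\ll x$ directly. For $v'\llideal z$: since $w_1',\ldots,w_m'\ll z$ are finitely many, they all lie below a single $z'\ll z$ (take a sufficiently large term of a $\ll$-increasing approximation of $z$), so $v'\leq\sum_j w_j'\leq mz'\leq\infty z'$ with $z'\ll z$. For $z\llideal v$: from $z\ll\sum_j w_j$ and $w_j\ll u$ we get $z\leq\sum_j w_j\leq mu\leq\infty u$, and $u\ll v$. I expect the main obstacle to be precisely the coordination at the recombination step — extracting from the $m$ pieces one element $z$ that simultaneously satisfies $z\ll x$, is large enough that $\infty z$ dominates $v'$, and is small enough to stay inside the ideal of $v$; once the common bound is correctly taken to be $x$, the remaining manipulation of $\ll$ versus $\leq$ and the selection of the single $z'$ are bookkeeping.
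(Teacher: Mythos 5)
Your argument is correct and is essentially the paper's own proof: interpolate $v'\ll u\ll v$, bound $u$ by $mx$, decompose via the $m$-fold version of \axiomO{6} into pieces way-below both $u$ and $x$, and recombine with \autoref{prp:RefO7} using $x$ as the common upper bound. Your verification of $v'\llideal z$ via a single $z'\ll z$ dominating the $w_j'$ is in fact slightly more explicit than the paper's, but the route is the same.
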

\begin{proof}
Choose $v''\in S$ such that $v'\ll v''\ll v$.
Since $v''\ll v\leq\infty x$, we can find $n\geq 1$ such that $v''\ll nx$.
Applying \axiomO{6} for $v'\ll v''\ll nx$, we obtain elements $z_1,\ldots,z_n\in S$ such that
\[
v'\ll z_1+\ldots+z_n, \quad
z_1\ll v'',x, \quad\ldots,\quad
z_n\ll v'',x.
\]

Choose $z_1',\ldots,z_n'\in S$ such that
\[
v'\ll z_1'+\ldots+z_n', \quad
z_1'\ll z_1, \quad \ldots, \quad
z_n'\ll z_n.
\]

Applying \autoref{prp:RefO7} for $z_j'\ll z_j\leq x$, we obtain $z\in S$ such that
\[
z_1',\ldots,z_n'\ll z \ll x,z_1+\ldots+z_n.
\]

This implies that
\[
v' \ll z_1'+\ldots+z_n'\leq \infty z, \quad
z \ll z_1+\ldots+z_n \leq \infty v'' \leq \infty v, \andSep
z\ll x, 
\]
as required.
\end{proof}

\begin{prp}
\label{prp:CharIdealFilteredO6O7}
Let $S$ be a \CuSgp{} satisfying \axiomO{6} and \axiomO{7}.
Then $S$ is ideal-filtered if and only if for all $x',x,y$ in~$S$ satisfying
\[
x'\ll x\ll 2y
\]
there exists $z\in S$ such that
\[
x' \llideal z\ll x,y.
\]
\end{prp}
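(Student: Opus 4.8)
The plan is to reduce to \autoref{prp:CharIdealFiltered}, which already characterizes ideal-filteredness through two conditions, and then to observe that the first of these two conditions is automatic as soon as $S$ satisfies \axiomO{6} and \axiomO{7}.

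The first point to notice is that the condition appearing in the present statement is \emph{verbatim} condition~(2) of \autoref{prp:CharIdealFiltered}. Hence, by that proposition, $S$ is ideal-filtered if and only if it satisfies both condition~(1) and condition~(2) there. It therefore suffices to show that, under \axiomO{6} and \axiomO{7}, condition~(1) of \autoref{prp:CharIdealFiltered} holds for \emph{every} choice of $v',v,x$, so that ideal-filteredness collapses to condition~(2) alone.

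To prove that condition~(1) is automatic, I would invoke \autoref{prp:Cond1Automatic}. The only thing to check is that the hypothesis of condition~(1), namely $v' \ll v \llideal x$, implies the (weaker) hypothesis $v' \ll v \leq \infty x$ of \autoref{prp:Cond1Automatic}. This is immediate from the definition of $\llideal$: if $v \llideal x$, then $v \leq \infty x'$ for some $x' \ll x$, and since $x' \ll x$ gives $x'\leq x$ and hence $\infty x' \leq \infty x$, we conclude $v \leq \infty x$. Thus \autoref{prp:Cond1Automatic} applies and produces $z \in S$ with $v' \llideal z \llideal v$ and $z \ll x$, which is exactly the conclusion of condition~(1).

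Putting the pieces together: under \axiomO{6} and \axiomO{7}, condition~(1) of \autoref{prp:CharIdealFiltered} always holds, so by that proposition $S$ is ideal-filtered if and only if it satisfies condition~(2), i.e.\ the stated property. I expect no genuine obstacle in this argument, as the substantive work has already been carried out in \autoref{prp:CharIdealFiltered} and \autoref{prp:Cond1Automatic}; the proof is simply the bookkeeping of matching hypotheses, the only subtlety being the elementary implication $v \llideal x \Rightarrow v \leq \infty x$.
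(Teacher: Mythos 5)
Your argument is correct and is exactly the paper's proof: the paper's own justification is the one-line remark that the statement follows from \autoref{prp:CharIdealFiltered} and \autoref{prp:Cond1Automatic}, and you have simply spelled out the bookkeeping, including the (correct) observation that $v\llideal x$ implies $v\leq\infty x$ so that \autoref{prp:Cond1Automatic} applies.
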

\begin{proof}
This follows from \autoref{prp:CharIdealFiltered} and \autoref{prp:Cond1Automatic}.
\end{proof}

\begin{prp}
\label{prp:DivImplIdealFiltered}
Let $S$ be a $(2,\omega)$-divisible \CuSgp{} satisfying \axiomO{6} and \axiomO{7}.
Then $S$ is ideal-filtered.
\end{prp}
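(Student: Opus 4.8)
The plan is to reduce immediately to the single condition supplied by \autoref{prp:CharIdealFilteredO6O7}: since $S$ satisfies \axiomO{6} and \axiomO{7}, it is ideal-filtered if and only if, for all $x',x,y\in S$ with $x'\ll x\ll 2y$, there exists $z\in S$ with $x'\llideal z\ll x,y$. So I fix such $x'\ll x\ll 2y$ and construct $z$. The first move is to use $(2,\omega)$-divisibility to pass to a ``halved'' version of $x$: choosing $x''$ with $x'\ll x''\ll x$ and applying divisibility to $x''\ll x$ yields $w$ with $2w\leq x$ and $x''\leq\infty w$; since $x'\ll x''\leq\infty w$, a standard use of \axiomO{2} and \axiomO{4} produces $w_0\ll w$ with $x'\leq\infty w_0$. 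The decisive consequence is that $w\leq 2w\leq x\ll 2y$, so $w\ll 2y=y+y$, and in particular $w\leq y+y$; this factor-of-two slack is the whole point of invoking divisibility.

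Next comes the combination step. Applying \axiomO{6} to $w_0\ll w\leq y+y$ gives $p,q\in S$ with $p,q\ll w,y$ and $w_0\ll p+q$. I then merge $p$ and $q$ into a single element below both $x$ and $y$: applying \autoref{prp:RefO7} to (slightly shrunken primes of) $p,q\ll y$ produces $z$ with $p,q\ll z\ll y,\ p+q$. By construction $z\ll y$, while $z\ll p+q$ together with $p+q\ll 2w\leq x$ (using \axiomO{3} for $p+q\ll 2w$) forces $z\ll x$ as well. For the ideal domination, since $w_0\ll p+q$ and the primes of $p,q$ lie way-below $z$, I pass to a further $z'\ll z$ dominating those primes, whence $x'\leq\infty w_0\leq\infty z'$ with $z'\ll z$; that is exactly $x'\llideal z$. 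Taking this $z$ finishes the verification of the condition in \autoref{prp:CharIdealFilteredO6O7}.

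The main obstacle is ensuring the merged element is way-below $x$ rather than merely below $2x$. Combining two elements $p,q$ that are each $\ll x$ through \axiomO{7} only controls them beneath $p+q$, and the best one can say a priori is $p+q\ll 2x$, so a naive merge lands below $2x$; there is no way to push the sum back below $x$ in general. The resolution, and the single place where $(2,\omega)$-divisibility is essential, is to first divide $x$ down to $w$ with $2w\leq x$, so that the unavoidable doubling $p+q\ll 2w$ is absorbed into $x$. Everything else is routine manipulation of the $\ll$ and $\infty(\cdot)$ relations via \axiomO{2}--\axiomO{4}, so I would keep those steps brief and concentrate the exposition on the divisibility-driven halving and the \axiomO{6}--\axiomO{7} merge.
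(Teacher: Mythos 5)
Your argument is correct and is essentially the paper's own proof: both reduce to the criterion of \autoref{prp:CharIdealFilteredO6O7}, use $(2,\omega)$-divisibility to produce a halved element ($2w\leq x$ with $x'\llideal w$), decompose it under $y+y$ via \axiomO{6}, and merge the two pieces below $y$ and below their sum via \axiomO{7}, so that the doubling $p+q\ll 2w$ is absorbed into $x$. The details you supply (the interpolation producing $w_0$, the use of \axiomO{3}, and the final $\llideal$ chain) all check out.
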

\begin{proof}
It suffices to verify the statement in \autoref{prp:CharIdealFilteredO6O7}. Thus, let $x',x,y$ in~$S$ satisfy $x'\ll x\ll 2y$.
Using that $x$ is $(2,\omega)$-divisible, we obtain $d\in S$ such that $2d\ll x$ and $x'\llideal d$.
By \autoref{prp:FiltRelTauto}, there exists $d'\in S$ satisfying
\[
x'\llideal d'\ll d.
\]

Applying \axiomO{6} to $d'\ll d \ll 2y$, we obtain $z_1,z_2\in S$ such that
\[
d' \ll z_1+z_2, \andSep
z_1,z_2\ll d,y.
\]

Choose $z_1',z_2'\in S$ such that
\[
d'\ll z_1'+z_2', \quad
z_1'\ll z_1, \andSep
z_2'\ll z_2.
\]

Applying \axiomO{7} to $z_1'\ll z_1 \ll y$ and $z_2'\ll z_2 \ll y$, we obtain $z\in S$ satisfying
\[
z_1',z_2' \ll z \ll y,z_1+z_2,
\]
which implies
\[
x' \llideal d' \ll z_1'+z_2' \llideal z, \quad
z \ll z_1+z_2 \leq d+d \ll x, \andSep
z \ll y.
\]

This shows that $z$ has the desired properties.
\end{proof}

\begin{prp}
\label{prp:IdeFilStabCas}
Let $A$ be a stable \ca{}. 
Then, $\Cu(A)$ is ideal-filtered if and only if for every $a\in A_+$, $b\in (\overline{aAa}\otimes M_2)_+$ and $\varepsilon>0$, there exists $c\in\overline{aAa}$ such that $c\precsim b$ and such that $(b-\varepsilon)_+\in\overline{\linSpan}AcA$.
\end{prp}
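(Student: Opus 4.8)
The plan is to translate the $C^*$-algebraic condition into the Cuntz-semigroup criterion for ideal-filteredness from \autoref{prp:CharIdealFilteredO6O7}, which applies since $\Cu(A)$ satisfies \axiomO{6} and \axiomO{7}. I will lean on two features of a stable \ca: first, that $\Cu(A)=A_+/{\sim}$ (as $A\cong A\otimes\KK$), so that \emph{every} element of $\Cu(A)$ has the form $[g]$ with $g\in A_+$; and second, the dictionary $[a]\leftrightarrow$ single copy, $2[a]=[a\oplus a]\leftrightarrow$ the doubled algebra $\overline{aAa}\otimes M_2\subseteq A\otimes M_2\cong A$. Under this dictionary, $b\in(\overline{aAa}\otimes M_2)_+$ satisfies $[b]\le 2[a]$, the requirement $c\in\overline{aAa}$ becomes $[c]\le[a]$, the requirement $c\precsim b$ becomes $[c]\le[b]$, and $(b-\varepsilon)_+\in\overline{\linSpan}AcA$ becomes $[(b-\varepsilon)_+]\le\infty[c]$. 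Thus both sides of the equivalence concern precisely the conclusion ``$[(b-\varepsilon)_+]\llideal z\ll[a],[b]$'' of \autoref{prp:CharIdealFilteredO6O7} applied to $[b]\ll 2[a]$.

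For the forward implication I assume $\Cu(A)$ ideal-filtered and fix $a,b,\varepsilon$. Choosing $\varepsilon_0\in(0,\varepsilon)$ with $[(b-\varepsilon)_+]\ll[(b-\varepsilon_0)_+]\ll[b]\le 2[a]$ and applying \autoref{prp:CharIdealFilteredO6O7} to $[(b-\varepsilon)_+]\ll[(b-\varepsilon_0)_+]\ll 2[a]$, I obtain $z$ with $[(b-\varepsilon)_+]\llideal z\ll[(b-\varepsilon_0)_+],[a]$; in particular $z\ll[a]$ and $z\ll[b]$. The decisive realization step uses stability: I write $z=[g]$ for some $g\in A_+$, and since $[g]=z\le[a]$ and $[g]=z\le[b]$ I get $g\precsim a$ and $g\precsim b$ \emph{simultaneously}. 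Taking $z'\ll z$ with $[(b-\varepsilon)_+]\le\infty z'$, then $\varepsilon'>0$ with $z'\le[(g-\varepsilon')_+]$, I apply R{\o}rdam's Lemma to $g\precsim a$ to obtain $c\in\overline{aAa}$ with $c\sim(g-\varepsilon')_+$, exactly as in the proof of \autoref{prp:ScaleCuA}(1). Then $c\in\overline{aAa}$, $c\sim(g-\varepsilon')_+\precsim g\precsim b$, and $[(b-\varepsilon)_+]\le\infty z'\le\infty[(g-\varepsilon')_+]=\infty[c]$, so $c$ has all the required properties.

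For the converse I assume the $C^*$-condition and verify the criterion of \autoref{prp:CharIdealFilteredO6O7}. Given $x'\ll x\ll 2y$, I first shrink $y$: since $x\ll 2y$ there is, via \axiomO{3}, some $\tilde y\ll y$ with $x\ll 2\tilde y$, and I realize $\tilde y=[a]$. As $x\le[a\oplus a]$ we have $x\in\Sigma_A$, so \autoref{prp:ScaleCuA}(1)—run with auxiliary element $a\oplus a$—yields $b\in(\overline{aAa}\otimes M_2)_+$ with $x'\ll[b]\ll x$. Fixing $\varepsilon>0$ with $x'\ll[(b-\varepsilon)_+]$ and applying the $C^*$-condition to $a,b,\varepsilon$ produces $c\in\overline{aAa}$ with $c\precsim b$ and $[(b-\varepsilon)_+]\le\infty[c]$. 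I claim $z:=[c]$ works: $[c]\le[a]=\tilde y\ll y$ gives $z\ll y$, and $[c]\le[b]\ll x$ gives $z\ll x$; finally $x'\ll[(b-\varepsilon)_+]\le\infty[c]=\sup_n\infty c_n$ for a $\ll$-increasing $c_n\nearrow[c]$ (by \axiomO{4}), so $x'\le\infty c_n$ for some $n$, which gives $x'\llideal z$.

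The main obstacle is the realization step in the forward direction. Ideal-filteredness only delivers an \emph{abstract} Cuntz class $z$ lying below both $[a]$ and $[b]$, and the difficulty is to represent it by a single positive element $c$ that simultaneously sits in the corner $\overline{aAa}$ and is Cuntz-below $b$: realizing $z$ separately inside $\overline{aAa}$ or inside $\overline{bAb}$ only ever secures two of the three desired properties, and the failure of ``diagonal domination'' in $M_2$—already visible in \autoref{exa:NotIdealFiltered}—means there is no formal device for folding a realization from the doubled algebra back into a single copy. Stability is exactly what breaks this impasse: it lets me realize $z$ itself as $[g]$ with $g\in A_+$, so that $g$ is at once Cuntz-below $a$ and $b$, after which R{\o}rdam's Lemma transports a cut of $g$ into $\overline{aAa}$ while preserving $c\precsim b$ and the ideal condition.
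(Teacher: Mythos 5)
Your proof is correct and follows essentially the same route as the paper: both directions reduce the statement to the criterion of \autoref{prp:CharIdealFilteredO6O7} and use stability together with R{\o}rdam's Lemma to transport representatives between $\overline{aAa}$ and $\overline{aAa}\otimes M_2$. The only (cosmetic) difference is in the backward direction, where you first realize an interpolant of $x$ by an element $b\in(\overline{aAa}\otimes M_2)_+$ before invoking the hypothesis, whereas the paper applies the hypothesis to the element $r^*r$ obtained by transporting $(b-\varepsilon)_+$ into that corner.
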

\begin{proof}
Since the Cuntz semigroup of every \ca{} satisfies \axiomO{6} and \axiomO{7}, we know by \autoref{prp:CharIdealFilteredO6O7} that $\Cu(A)$ is ideal-filtered if and only if for all $x',x,y$ in~$\Cu (A)$ satisfying $x'\ll x\ll 2y$ there exists $z\in \Cu (A)$ such that
\[
x' \llideal z\leq x,y.
\] 

Thus, to prove the backward implication, let $x'\ll x\ll 2y$ and take $a,b\in A_+$ such that $x=[b]$ and $y=[a]$. 
Let $\varepsilon>0$ be such that $x'\ll [(b-2\varepsilon )_+]$.
Using that~$b$ is Cuntz subequivalent to $a\otimes 1$ (a strictly positive element in $\overline{aAa}\otimes M_2$), we can choose $r\in A\otimes M_2$ such that
\[
(b-\varepsilon )_+ = rr^*, \andSep
r^*r\in \overline{aAa}\otimes M_2;
\]
see, for example, \cite[Theorem~2.30]{Thi17:CuLectureNotes}.

By assumption, there exists $c\in(\overline{aAa})_+$ with $c\precsim r^*r$ and such that $(r^*r-\varepsilon )_+$ belongs in the ideal generated by $c$.
We claim that $[c]$ has the required properties.
Indeed, we have
\[
[c] \leq [a] = y, \andSep
[c] \leq [r^*r] = [rr^*] \leq [b] = x.
\]

Further, using that $(rr^*-\varepsilon )_+\sim (r^*r-\varepsilon )_+$ at the third step (see, for example, \cite[Corollary~2.53]{Thi17:CuLectureNotes}), we get
\[
x' \ll [(b-2\varepsilon)_+] = [(rr^*-\varepsilon )_+] = [(r^*r-\varepsilon )_+] \leq \infty[c],
\]
as required.

To prove the forward implication, let $a,b,\varepsilon$ be as in the statement. 
Then, since $b\in \overline{aAa}\otimes M_2$ and $[(b-\varepsilon /2)_+]\ll [b]$ in $\Cu (A)$, we have
\[
[(b-\varepsilon )_+]\ll [(b-\varepsilon/2)_+ ]\ll 2[a].
\]

Using that $\Cu (A)$ is ideal-filtered, we obtain $c\in A_+$ such that
\[
[(b-\varepsilon )_+] \llideal [c] \ll  [(b-\varepsilon/2)_+ ], [a].
\]

Choose $\delta>0$ in such a way that
\[
[(b-\varepsilon )_+] \leq \infty[(c-\delta)_+].
\]

Since $c\precsim a$, we obtain $r$ such that $(c-\delta)_+=rr^*$ and $r^*r\in\overline{aAa}$;
see \cite[Theorem~2.30]{Thi17:CuLectureNotes}.
Then $r^*r$ has the desired properties.
\end{proof}

\begin{lma}
\label{prp:InterpolIdealFiltered}
Let $S$ be a \CuSgp{} satisfying \axiomO{6} and Riesz interpolation.
Then $S$ is ideal-filtered.
\end{lma}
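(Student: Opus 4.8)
The plan is to deduce this from \autoref{prp:CharIdealFilteredO6O7}. That characterization requires $S$ to satisfy both \axiomO{6} and \axiomO{7}, so the first task is to verify that Riesz interpolation, together with the ambient \CuSgp{} structure, already forces \axiomO{7}; after that it remains only to check the single condition of \autoref{prp:CharIdealFilteredO6O7}, namely that for all $x'\ll x\ll 2y$ there is $z$ with $x'\llideal z\ll x,y$.

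For \axiomO{7}, I would start from $x_1'\ll x_1\ll w$ and $x_2'\ll x_2\ll w$. Writing $w$ as the supremum of a $\ll$-increasing sequence $(w_n)_n$ via \axiomO{2}, I can pick a single $w_0\ll w$ with $x_1,x_2\leq w_0$. Choosing $x_j^+$ with $x_j'\ll x_j^+\ll x_j$ and setting $s:=x_1^++x_2^+$, \axiomO{3} gives $s\ll x_1+x_2$, while $x_1^+,x_2^+\leq w_0$ and $x_1^+,x_2^+\leq s$. Applying Riesz interpolation to $\{x_1^+,x_2^+\}\leq\{w_0,s\}$ yields $x$ with $x_1^+,x_2^+\leq x\leq w_0,s$; since $x\leq w_0\ll w$ and $x\leq s\ll x_1+x_2$ we get $x\ll w,x_1+x_2$, and since $x_j'\ll x_j^+\leq x$ we get $x_1',x_2'\ll x$. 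This is exactly \axiomO{7}.

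With \axiomO{7} available, I turn to the condition of \autoref{prp:CharIdealFilteredO6O7}. Given $x'\ll x\ll y+y$, \axiomO{6} produces $z_1,z_2$ with $z_1,z_2\ll x,y$ and $x'\ll z_1+z_2$; interpolating the sum gives $z_1'\ll z_1$ and $z_2'\ll z_2$ with $x'\ll z_1'+z_2'$, so that $z_1',z_2'\ll x$ and $z_1',z_2'\ll y$. Using \axiomO{2} as above I find $z^x\ll x$ and $z^y\ll y$ with $z_1',z_2'\leq z^x$ and $z_1',z_2'\leq z^y$, and then Riesz interpolation applied to $\{z_1',z_2'\}\leq\{z^x,z^y\}$ yields $z$ with $z_1',z_2'\leq z\leq z^x,z^y$, whence $z\ll x,y$. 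Finally $x'\ll z_1'+z_2'\leq 2z\leq\infty z$, so $x'\ll\infty z$; and since $\infty z=\sup_n\infty z_n$ for a $\ll$-increasing sequence $(z_n)_n$ with supremum $z$, I obtain $m$ with $x'\leq\infty z_m$ and $z_m\ll z$, which is precisely $x'\llideal z$. By \autoref{prp:CharIdealFilteredO6O7} this shows that $S$ is ideal-filtered.

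The main obstacle throughout is the recurring need to upgrade the $\leq$-bounds produced by Riesz interpolation to the $\ll$-bounds demanded by \axiomO{7} and by ideal-filteredness. The device that resolves this is \axiomO{2}: interpolating against an auxiliary upper bound of the form $w_0\ll w$ (an early term of a $\ll$-increasing sequence that still dominates the relevant finite set) converts $x\leq w_0$ into $x\ll w$, and the same mechanism lets me extract the required $\ll$-interior element witnessing $\llideal$ out of the relation $x'\ll\infty z$. In writing this up I would be careful that Riesz interpolation is invoked only with two upper bounds at a time (it is, in both the \axiomO{7} step and the final combination), and that the reduction to the case $x\ll 2y$ in \autoref{prp:CharIdealFilteredO6O7} is legitimately available once \axiomO{7} has been established.
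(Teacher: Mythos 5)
Your proposal is correct and follows essentially the same route as the paper: reduce to the criterion of \autoref{prp:CharIdealFilteredO6O7} after noting that Riesz interpolation yields \axiomO{7}, then apply \axiomO{6} and interpolate against $\ll$-approximants of $x$ and $y$. You merely spell out two steps the paper leaves implicit (the derivation of \axiomO{7} from interpolation, and the passage from $x'\ll\infty z$ to $x'\llideal z$), both of which you handle correctly.
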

\begin{proof}
It is readily checked that the Riesz interpolation property implies \axiomO{7}.
Thus, it suffices to verify the statement in \autoref{prp:CharIdealFilteredO6O7}.
Let $x',x,y$ in~$S$ satisfy $x'\ll x\ll 2y$.
Applying \axiomO{6}, we obtain $z_1,z_2\in S$ such that
\[
x'\ll z_1+z_2, \quad
z_1\ll x,y, \andSep
z_2\ll x,y.
\]

Now choose $\tilde{x},\tilde{y}\in S$ such that
\[
z_1\ll \tilde{x},\tilde{y}, \quad
z_2\ll \tilde{x},\tilde{y}, \quad
\tilde{x}\ll x, \andSep
\tilde{y}\ll y.
\]

Using Riesz interpolation for $z_1,z_2\leq \tilde{x},\tilde{y}$, we obtain $z\in S$ such that
\[
z_1,z_2\leq z\leq \tilde{x},\tilde{y}
\]
and, therefore, we have
\[
x' \ll z_1+z_2 \leq \infty z, \andSep z\ll x,y,
\]
which shows that $z$ has the desired properties.
\end{proof}

\begin{thm}
\label{prp:IdealFilteredSR1}
Let $A$ be a \ca{} of stable rank one.
Then $\Cu(A)$ is ideal-filtered.
\end{thm}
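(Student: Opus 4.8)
The plan is to deduce the statement from \autoref{prp:InterpolIdealFiltered}, which shows that any \CuSgp{} satisfying \axiomO{6} together with Riesz interpolation is ideal-filtered. Since the Cuntz semigroup of every \ca{} satisfies \axiomO{6} (and \axiomO{7}), the whole problem reduces to verifying that $\Cu(A)$ has Riesz interpolation whenever $A$ has stable rank one. For this I would invoke the structure theory of Cuntz semigroups of stable rank one \ca{s} developed in \cite{AntPerRobThi22CuntzSR1}: one of its central consequences is precisely that, in this setting, $\Cu(A)$ is inf-semilattice ordered and enjoys Riesz interpolation. Granting this, \autoref{prp:InterpolIdealFiltered} applies directly and yields that $\Cu(A)$ is ideal-filtered.

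If one prefers to argue without passing through full interpolation, the alternative is the characterization in \autoref{prp:CharIdealFilteredO6O7}: since $\Cu(A)$ always satisfies \axiomO{6} and \axiomO{7}, ideal-filteredness is equivalent to the assertion that for all $x',x,y\in\Cu(A)$ with $x'\ll x\ll 2y$ there exists $z$ with $x'\llideal z\ll x,y$. Representing $x$ and $y$ by positive elements $b$ and $a$ in $A\otimes\KK$ with $[b]\ll 2[a]$, the task becomes producing a positive element whose Cuntz class lies below both $[b]$ and $[a]$ while still generating an ideal that captures a cut-down of $x'$; this is exactly the type of decomposition supplied by the cancellation and comparison phenomena available under stable rank one. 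Either way, the \CuSgp{}-side bookkeeping is routine.

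The main obstacle is therefore located entirely in the structural input, namely establishing Riesz interpolation for $\Cu(A)$ under stable rank one; everything else is already absorbed into the proof of \autoref{prp:InterpolIdealFiltered}, where one first shrinks to $\tilde{x}\ll x$ and $\tilde{y}\ll y$, interpolates an element $z$ with $z\leq\tilde{x},\tilde{y}$, and then upgrades to $z\ll x,y$ together with $x'\ll\infty z$. Thus the only genuinely nontrivial step is citing (or reproving) that stable rank one forces Riesz interpolation of the Cuntz semigroup, which rests on the fine comparison theory of positive elements and the cancellation available in such algebras; by contrast, \autoref{exa:NotIdealFiltered} shows that this interpolation, and hence ideal-filteredness, genuinely fails once stable rank one is dropped, as for $C(S^2)$.
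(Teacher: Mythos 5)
Your proposal is correct and follows essentially the same route as the paper: the paper's proof likewise cites \cite[Theorem~3.5]{AntPerRobThi22CuntzSR1} for Riesz interpolation of $\Cu(A)$ under stable rank one and then applies \autoref{prp:InterpolIdealFiltered} (using that \axiomO{6} always holds). The alternative sketch via \autoref{prp:CharIdealFilteredO6O7} is not needed.
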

\begin{proof}
The Cuntz semigroup of every \ca{} satisfies \axiomO{6};
see \cite[Proposition~5.1.1]{Rob13Cone}.
By \cite[Theorem~3.5]{AntPerRobThi22CuntzSR1}, the Cuntz semigroup of every \ca{} of stable rank one satisfies Riesz interpolation.
Thus, we may apply \autoref{prp:InterpolIdealFiltered} to deduce that $\Cu(A)$ is ideal-filtered.
\end{proof}

Recall from \cite[Definition~3.1]{ThiVil22DimCu} that a \CuSgp{} $S$ is said to have \emph{dimension at most $n$}, in symbols $\dim (S)\leq n$, if whenever $x'\ll x\ll y_1+\ldots +y_r$ in~$S$ there exist elements $z_{j,k}\in S$ for $j = 1,\ldots ,r$ and $k = 0,\ldots,n$ such that
\begin{enumerate}
\item[(i)]
$z_{j,k}\ll y_j$ for each $j$ and $k$;
\item[(ii)]
$x'\ll \sum_{j,k} z_{j,k}$;
\item[(iii)]
$\sum_j z_{j,k}\ll x$ for each $k$.
\end{enumerate}

\begin{lma}
\label{prp:Dim0IdealFiltered}
Let $S$ be a \CuSgp{} satisfying \axiomO{7} and $\dim(S)=0$.
Then $S$ is ideal-filtered.
\end{lma}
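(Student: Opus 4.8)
The plan is to reduce to the characterization in \autoref{prp:CharIdealFilteredO6O7}, which applies once $S$ is known to satisfy \axiomO{6} and \axiomO{7}. Since \axiomO{7} is assumed, the first thing I would do is observe that $\dim(S)=0$ forces \axiomO{6}: given $x'\ll x\leq y+z$, pick $x''$ with $x'\ll x''\ll x$, so that $x''\ll y+z$; feeding $x'\ll x''\ll y+z$ into the definition of $\dim(S)=0$ with $r=2$ and $n=0$ produces $v,w$ with $v\ll y$, $w\ll z$, $x'\ll v+w$ and $v+w\ll x''\ll x$, whence $v\ll x,y$ and $w\ll x,z$, which is exactly \axiomO{6}.

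By \autoref{prp:CharIdealFilteredO6O7}, it then suffices to prove: for all $x',x,y\in S$ with $x'\ll x\ll 2y$ there is $z$ with $x'\llideal z\ll x,y$. Here $\dim(S)=0$ performs the essential splitting: applying it to $x'\ll x\ll y+y$ yields $z_1,z_2\in S$ with $z_1,z_2\ll y$, with $x'\ll z_1+z_2$, and with $z_1+z_2\ll x$. The remaining task is to merge $z_1$ and $z_2$ into a single element $z$ that is still $\ll x,y$ while certifying that $x'\llideal z$. For the merge I would choose $z_1'\ll z_1$ and $z_2'\ll z_2$ with $x'\ll z_1'+z_2'$ (possible since $x'\ll z_1+z_2$), and apply \autoref{prp:RefO7} to $z_1'\ll z_1\leq y$ and $z_2'\ll z_2\leq y$ to obtain $z$ with $z_1',z_2'\ll z\ll y,z_1+z_2$. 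Then $z\ll y$ and $z\ll z_1+z_2\ll x$, so $z\ll x,y$.

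The step I expect to be the main obstacle is upgrading ``$x'$ lies in the ideal generated by $z$'' to the sharper relation $x'\llideal z$. The output of \autoref{prp:RefO7} only records $z_1',z_2'\ll z$ individually, so one immediately gets $x'\ll z_1'+z_2'\leq 2z$, that is $x'\leq\infty z$, which is strictly weaker than $\llideal$. To remedy this I would run a second combination: pick $z_1''\ll z_1'$ and $z_2''\ll z_2'$ with $x'\ll z_1''+z_2''$, and apply \autoref{prp:RefO7} to $z_1''\ll z_1'\leq z$ and $z_2''\ll z_2'\leq z$ to obtain $u$ with $z_1'',z_2''\ll u\ll z$. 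Then $x'\ll z_1''+z_2''\leq 2u\leq\infty u$ with $u\ll z$, which by definition gives $x'\llideal z$.

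Combining the two displays yields $x'\llideal z\ll x,y$, verifying the hypothesis of \autoref{prp:CharIdealFilteredO6O7} and hence ideal-filteredness of $S$. The only delicate points beyond the splitting supplied by $\dim(S)=0$ are the repeated choices of $\ll$-predecessors covering $x'$, which are justified by \axiomO{3} together with the interpolation furnished by \axiomO{1} and \axiomO{2}; all other manipulations are routine bookkeeping with the relation $\llideal$, for which \autoref{prp:FiltRelTauto} is available.
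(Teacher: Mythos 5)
Your proposal is correct and follows essentially the same route as the paper: reduce to \autoref{prp:CharIdealFilteredO6O7}, use $\dim(S)=0$ to split $x'\ll x\ll 2y$ into $z_1,z_2\ll y$ with $x'\ll z_1+z_2\ll x$, and merge via \axiomO{7}. The only differences are cosmetic: the paper cites a remark elsewhere for ``$\dim(S)=0$ implies \axiomO{6}'' where you verify it directly, and it leaves implicit the upgrade from $x'\leq\infty z$ to $x'\llideal z$, which your second application of \autoref{prp:RefO7} handles explicitly and correctly.
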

\begin{proof}
As noted in \cite[Remark~5.2]{ThiVil22DimCu}, $\dim(S)=0$ implies \axiomO{6}.
Thus, it suffices to verify the statement in \autoref{prp:CharIdealFilteredO6O7}.
Let $x',x,y$ in~$S$ satisfy $x'\ll x\ll 2y$.
Applying that $S$ is zero-dimensional, we obtain $z_1,z_2\in S$ satisfying
\[
z_1,z_2\ll y, \andSep
x' \ll z_1+z_2 \ll x.
\]

Choose $z_1',z_2'\in S$ such that
\[
z_1'\ll z_1, \quad
z_2'\ll z_2, \andSep
x' \ll z_1'+z_2'.
\]

Applying \axiomO{7} to $z_1'\ll z_1\ll y$ and $z_2'\ll z_2\ll y$, we obtain $z\in S$ such that
\[
z_1',z_2'\ll z \ll y, z_1+z_2
\]
and, consequently,
\[
x' \ll z_1'+z_2' \leq \infty z, \quad
z \ll z_1+z_2 \ll x, \andSep
z \ll y,
\]
which shows that $z$ has the desired properties.
\end{proof}

In \autoref{prp:IdealFilteredTopDimZero} we will generalize \autoref{prp:IdealFilteredRR0} below to the case of \ca{s} with topological dimension zero in the separable case.

\begin{thm}
\label{prp:IdealFilteredRR0}
Let $A$ be a \ca{} of real rank zero.
Then $\Cu(A)$ is ideal-filtered.
\end{thm}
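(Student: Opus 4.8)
The plan is to reduce this to the zero-dimensionality result \autoref{prp:Dim0IdealFiltered}. Since the Cuntz semigroup of every \ca{} satisfies \axiomO{7}, it suffices to show that $\dim(\Cu(A))=0$ whenever $A$ has real rank zero. Thus the heart of the argument is the implication: real rank zero for $A$ forces the dimension of $\Cu(A)$ to be zero in the sense recalled before \autoref{prp:Dim0IdealFiltered}.

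First I would recall the well-known consequence of real rank zero at the level of the Cuntz semigroup, namely that $\Cu(A)$ has an abundance of compact (projection) classes: by \cite[Theorem~8.9]{ThiVil21arX:NowhereScattered}-style computations, or more directly by the classical fact that in a real-rank-zero \ca{} every hereditary sub-\ca{} has an approximate unit of projections, every element $[a]\in\Cu(A)$ is the supremum of an increasing sequence of classes $[p_n]$ of projections $p_n\in A\otimes\KK$. Equivalently, the compact elements are $\ll$-dense, and for any $x'\ll x$ one may interpolate a projection class $[p]$ with $x'\ll[p]\le[p]\ll x$. This is the key structural input.

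The main step is then to verify the defining inequality for $\dim(\Cu(A))=0$: given $x'\ll x\ll y_1+\dots+y_r$, I would produce elements $z_{j}\ll y_j$ with $x'\ll\sum_j z_j\ll x$ (the case $n=0$ uses a single index $k$, so condition (iii) reads $\sum_j z_j\ll x$). Using \axiomO{6} repeatedly (or its iterated form), one first splits $x'$ into a sum $\sum_j w_j$ with $w_j\ll x,y_j$. The difficulty is that the resulting $\sum_j w_j$ need only satisfy $x'\ll\sum_j w_j$, not $\sum_j w_j\ll x$; to upgrade to the \emph{compact} witnesses required for a clean $\ll x$ bound, I would replace each $w_j$ by a projection class $[p_j]\ll y_j$ using real rank zero applied inside the hereditary sub-\ca{s} corresponding to $y_j$, and arrange $x'\ll\sum_j[p_j]$. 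Because each $[p_j]$ is compact, $\sum_j[p_j]$ is compact, and combined with $\sum_j[p_j]\ll x$ (obtained by choosing the projections small enough, i.e. $p_j$ below $x$ in the relevant cut-down) this gives the zero-dimensional witnesses.

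The main obstacle I anticipate is precisely the simultaneous control of the two conditions $z_j\ll y_j$ and $\sum_j z_j\ll x$ while keeping $x'\ll\sum_j z_j$: the real-rank-zero projection approximation must be performed coherently across the summands, respecting the orthogonality needed to add the resulting projection classes in $\Cu(A)$. I expect this to require working at the level of positive elements $a,b_1,\dots,b_r$ representing $x,y_1,\dots,y_r$, using R\o{}rdam's Lemma to realize the subequivalences $x'\precsim\bigoplus b_j$ concretely, and then invoking real rank zero to approximate the relevant positive elements by projections within the appropriate hereditary sub-\ca{s}, so that the projection classes assemble into the desired compact element dominated by $x$. Once $\dim(\Cu(A))=0$ is established, \autoref{prp:Dim0IdealFiltered} delivers ideal-filteredness immediately.
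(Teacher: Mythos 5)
Your route is exactly the paper's: reduce to \autoref{prp:Dim0IdealFiltered} by showing that $\Cu(A)$ is zero-dimensional when $A$ has real rank zero (the Cuntz semigroup of any \ca{} satisfies \axiomO{7}, so that lemma applies). The only difference is that the paper does not reprove zero-dimensionality; it cites \cite[Proposition~5.4]{ThiVil22DimCu}, which is precisely the statement you attempt to sketch. Your sketch of that step is where the real work lies, and the obstacle you flag is genuine: obtaining $z_j\ll y_j$ together with $\sum_j z_j\ll x$ does not follow from choosing each $[p_j]\leq x$ separately, since compactness of the individual $[p_j]$ only gives $[p_j]\ll x$, not $\sum_j[p_j]\leq x$; one needs the projections to sit (up to Murray--von Neumann subequivalence) orthogonally inside a representative of $x$, which is the content of the cited proposition and typically uses Riesz decomposition in $V(A)$ rather than a naive term-by-term replacement. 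So the proposal is correct as a proof of the theorem provided that step is taken as the known result \cite[Proposition~5.4]{ThiVil22DimCu}; as a self-contained argument it leaves that step open.
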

\begin{proof}
The Cuntz semigroup of every \ca{} satisfies \axiomO{7};
see \cite[Proposition~2.2]{AntPerRobThi21Edwards}.
By \cite[Proposition~5.4]{ThiVil22DimCu}, the Cuntz semigroup of every \ca{} of real rank zero is zero-dimensional.
Thus, we may apply \autoref{prp:Dim0IdealFiltered} to deduce that $\Cu(A)$ is ideal-filtered.
\end{proof}

\begin{lma}
\label{prp:CharLatAlgbraic}
Let $S$ be a \CuSgp{} satisfying \axiomO{6} and \axiomO{7}.
Then the following are equivalent:
\begin{enumerate}
\item
for all $x',x\in S$ satisfying $x'\ll x$ there exist $y',y\in S$ such that $x'\ll y'\ll y\ll x$ and $y\ll \infty y'$;
\item
$S\otimes\{0,\infty\}$ is algebraic.
\end{enumerate}
\end{lma}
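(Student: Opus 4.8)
The plan is to work through the identification $S\otimes\{0,\infty\}\cong\Lat(S)$ under which the canonical map $\iota\colon S\to S\otimes\{0,\infty\}$, $x\mapsto x\otimes\infty$, corresponds to sending $x$ to the ideal $I_x$ it generates. Since $s_1\otimes\infty+s_2\otimes\infty=(s_1+s_2)\otimes\infty$ and $s\otimes 0=0$, every element of $T:=S\otimes\{0,\infty\}$ is the supremum of an increasing sequence of \emph{simple tensors} $\iota(s_n)$. First I would record the two order-theoretic facts that drive everything: for $s,t\in S$ one has $\iota(s)\leq\iota(t)$ if and only if $s\leq\infty t$, and $\iota(s)\ll\iota(t)$ if and only if $s\leq\infty t'$ for some $t'\ll t$ (the forward direction using \axiomO{2} and that $\iota$ preserves suprema and $\ll$, the backward direction using $\infty\iota(t')=\iota(t')$). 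Two consequences follow: every compact element of $T$ is a simple tensor (if $\iota(y)=\sup_n\iota(s_n)$ is compact then it equals some $\iota(s_m)$), and $\iota(y)$ is compact precisely when $y\leq\infty y'$ for some $y'\ll y$.

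With these in hand I would reduce algebraicity to a statement inside $S$: the semigroup $T$ is algebraic if and only if
\[
(\ast)\qquad \text{for all } x'\ll x \text{ in } S \text{ there is a compact } c\in T \text{ with } \iota(x')\leq c\leq\iota(x).
\]
The direction ``$T$ algebraic $\Rightarrow(\ast)$'' is immediate since $\iota(x')\ll\iota(x)$. For the converse, given $\iota(x)$ I would take a $\ll$-increasing sequence $x_n\to x$ in $S$ (\axiomO{2}) and apply $(\ast)$ to each $x_n\ll x_{n+1}$ to produce compacts $c_n$ with $\iota(x_n)\leq c_n\leq\iota(x_{n+1})$; these are automatically increasing with supremum $\iota(x)$, and a diagonal argument upgrades this to arbitrary elements via their simple-tensor approximations. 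The equivalence (1)$\Leftrightarrow$(2) then becomes (1)$\Leftrightarrow(\ast)$, and the implication (1)$\Rightarrow(\ast)$ is routine: the $y,y'$ provided by~(1) satisfy $x'\leq\infty y$, $y\leq\infty x$, and $y\leq\infty y'$ with $y'\ll y$, so $c:=\iota(y)$ is compact and sits between $\iota(x')$ and $\iota(x)$.

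The main obstacle is the converse $(\ast)\Rightarrow(1)$, where one must convert the purely ideal-theoretic compact element supplied by $(\ast)$ into \emph{honest} elements $x'\ll y'\ll y\ll x$ of $S$ with $y\ll\infty y'$; this is exactly where \axiomO{6} and \axiomO{7} enter. Here I would fix $x'\ll x_1\ll x_2\ll x_3\ll x$ and apply $(\ast)$ to $x_3\ll x$ to get a compact $c=\iota(y_0)$ with $\iota(x_3)\leq c\leq\iota(x)$; compactness yields $y_0'\ll y_0$ with $y_0\leq\infty y_0'$. Applying \autoref{prp:Cond1Automatic} to $y_0'\ll y_0\leq\infty x$ produces an honest $z\ll x$ with $y_0'\llideal z\llideal y_0$, whence $I_z=I_{y_0}$ is still compact and $x_3\leq\infty z$. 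Now two applications of \axiomO{7} do the realization: combining the lower approximant $x_2$ with a generator $z^\circ\ll z$ of the compact ideal $I_z$ (so $z\leq\infty z^\circ$) gives $y\ll x$ with $x_2\ll y$ and $\iota(y)=\iota(z)$ compact; a second combination of $x_1$ with a generator of $I_y$ lying below $y$ gives $y'\ll y$ with $x'\ll y'$ and $\iota(y')=\iota(y)$, so that $y\leq\infty y'$.

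Finally, choosing $\hat y$ with $y'\ll\hat y\ll y$ (\axiomO{2}) upgrades $y\leq\infty y'$ to the required strict relation, since $\hat y\ll y\leq\infty y'$ gives $\hat y\ll\infty y'$; the pair $(y',\hat y)$ then witnesses~(1), as $x'\ll y'\ll\hat y\ll x$. The delicate part of the argument is the bookkeeping of the ideals generated by the various auxiliary elements, together with the repeated use of the compactness characterization to keep regenerating the \emph{same} compact ideal $I_z$ while simultaneously pushing the honest elements up past $x'$ and down past $x$.
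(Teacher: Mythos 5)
Your proof is correct and follows essentially the same route as the paper: both identify $S\otimes\{0,\infty\}$ with the semigroup of singly generated ideals, characterize compactness of $\langle y\rangle$ by the existence of $y'\ll y$ with $y\leq\infty y'$, and then use \axiomO{6} and \axiomO{7} to realize the interpolating compact ideal by honest elements of $S$. The only difference is in the bookkeeping of the direction (2)$\Rightarrow$(1): the paper applies \axiomO{6} once and then a single $n$-fold application of \axiomO{7} (\autoref{prp:RefO7}), whereas you route through \autoref{prp:Cond1Automatic} and two binary applications of \axiomO{7}; both are valid.
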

\begin{proof}
We identify $S\otimes\{0,\infty\}$ with the set $\Lat_f(S)$ of singly generated ideals in $S$;
see \cite[Proposition~7.2.3]{AntPerThi18TensorProdCu}.
Given $x\in S$, we set $\langle x\rangle:=\{y\in S:y\leq\infty x\}$, the ideal generated by $x$.
By \cite[Proposition~5.1.7]{AntPerThi18TensorProdCu}, the map $S\to\Lat_f(S)$, $x\mapsto\langle x\rangle$, is a surjective \CuMor.
Thus, given $x,y\in S$, we have $\langle x \rangle \ll \langle y \rangle$ in $\Lat_f(S)$ if and only if there exists $y'\in S$ such that $x\leq\infty y'$ and $y'\ll y$.

To see that~(1) implies~(2), let $I',I \in \Lat_f(S)$ satisfy $I'\ll I$.
We need to find $J\in\Lat_f(S)$ satisfying $I'\leq J\ll J \leq I$. Choose $x\in S$ such that $I=\langle x \rangle$.
Then, we obtain $x'$ such that $I'\leq\langle x' \rangle$ and $x'\ll x$.
Applying the assumption, we get $y',y$ such that
\[
x'\ll y'\ll y\ll x, \andSep
y\ll \infty y'.
\]

Set $J:=\langle y \rangle$.
Using that $y\ll \infty y'$ at the first step, one has
\[
J \leq \langle y' \rangle \ll \langle y \rangle = J,
\]
showing that $J$ has the desired properties.

Conversely, assume that $\Lat_f(S)$ is algebraic. 
To verify~(1), let $x'',x\in S$ be such that $x''\ll x$.
Choose $x'$ satisfying $x''\ll x'\ll x$.
Then, since $x\mapsto \langle x\rangle$ is a \CuMor{}, we get $\langle x' \rangle \ll \langle x \rangle$.
We obtain $J\in\Lat_f(S)$ such that $\langle x' \rangle \leq J \ll J \leq \langle x \rangle$.
Thus, we obtain $z'',z$ such that
\[
x' \leq \infty z'', \andSep 
z''\ll z \leq \infty x, \infty z''.
\]

Choose $z'$ such that
\[
z'' \ll z' \ll z.
\]

We obtain $n$ satisfying $z' \leq n x$.
Applying \axiomO{6} for $z''\ll z'\leq nx$, there exist $e_1,\ldots,e_n$ such that
\[
z'' \ll e_1+\ldots+e_n, \andSep
e_j \ll z',x \text{ for } j=1,\ldots,n.
\]

Choose $e_1',\ldots,e_n'\in S$ such that
\[
z''\ll e_1'+\ldots+e_n', \andSep
e_j'\ll e_j \text{ for } j=1,\ldots,n.
\]

Applying \axiomO{7} for $x''\ll x'\leq x$ and $e_j'\ll e_j \ll x$ for $j=1,\ldots,n$, we obtain $y$ such that
\[
x'',e_1',\ldots,e_n'\ll y \ll x, x'+e_1+\ldots+e_n.
\]

Choose $y'$ such that
\[
x'',e_1',\ldots,e_n'\ll y' \ll y.
\]

Then $x''\ll y'\ll y\ll x$ and, since $z'\ll z\leq \infty z''$, we also get 
\[
y \ll x'+e_1+\ldots+e_n \leq \infty z' \leq \infty z'' \leq \infty(e_1'+\ldots+e_n') \leq \infty y',
\]
as desired.
\end{proof}

\begin{lma}
\label{prp:LatAlgebraicIdealFiltered}
Let $S$ be a \CuSgp{} satisfying \axiomO{5}-\axiomO{7}.
Assume that $S\otimes\{0,\infty\}$ is algebraic.
Then $S$ is ideal-filtered.
\end{lma}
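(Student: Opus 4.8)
The plan is to verify the criterion of \autoref{prp:CharIdealFilteredO6O7}: given $x',x,y\in S$ with $x'\ll x\ll 2y$, I must produce $z\in S$ with $x'\llideal z\ll x,y$. I would first record the structural fact that $x\ll 2y\le\infty y$ gives $x\le\infty y$, so the ideal generated by $x$ is contained in that of $y$; this is what makes the two bounds ``comparable'' and lets me concentrate on realizing a single compact ideal below both $x$ and $y$.

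Next I would use algebraicity to replace $\langle x\rangle$ by a compact ideal that still dominates $x'$. Inserting $x'\ll x''\ll x$ and applying \autoref{prp:CharLatAlgbraic} to $x''\ll x$ yields $p,q$ with $x''\ll p\ll q\ll x$ and $q\ll\infty p$, so that $\langle p\rangle=\langle q\rangle$ is compact. An \axiomO{6}/\axiomO{7} realization argument of the same type as in \autoref{prp:Cond1Automatic} then produces $w\ll x$ with $\langle w\rangle=\langle q\rangle$ compact and $x'\llideal w$. Since $w\ll x\ll 2y=y+y$, applying \axiomO{6} to a suitable $w^-\ll w$ (chosen, using compactness of $\langle w\rangle$, so that $\langle w^-\rangle=\langle w\rangle$) yields $m_1,m_2$ with $m_1,m_2\ll w,y$ and $w^-\ll m_1+m_2$. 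Thus $m_1,m_2\ll x,y$, the ideal $J:=\langle m_1+m_2\rangle=\langle w\rangle$ is compact, and $x'\llideal m_1+m_2$.

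The remaining, and main, difficulty is to merge the two elements $m_1,m_2$, each way-below both $x$ and $y$, into a single $z\ll x,y$ whose ideal still recovers $J$. The naive merges fail: combining $m_1,m_2$ via \axiomO{7} below $y$ (resp.\ below $x$) produces an element $\ll m_1+m_2$, hence only $\ll 2x$ (resp.\ $\ll 2y$), losing one of the two bounds. My plan is instead to use \axiomO{5} to build \emph{complements} of $m_1$: choosing $m_1^-\ll m_1$, \axiomO{5} gives $c_x,c_y$ with $m_1^-+c_x\le x\le m_1+c_x$ and $m_1^-+c_y\le y\le m_1+c_y$. I would then fit the excess of $m_2$ into both complements at once: since $m_2\ll x\le m_1+c_x$, \axiomO{6} splits a predecessor of $m_2$ as $g_1+h_1$ with $g_1\ll m_1$ and $h_1\ll c_x$; applying \axiomO{6} again to $h_1\ll m_2\ll y\le m_1+c_y$ splits $h_1$ as $g_2+h_2$ with $g_2\ll m_1$ and $h_2\ll c_y$, whence $h_2\ll c_x,c_y$. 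The candidate is $z:=m_1^-+h_2$, which is way-below $x$ (through $m_1^-+c_x\le x$) and way-below $y$ (through $m_1^-+c_y\le y$), while $\langle z\rangle=\langle m_1^-+h_2\rangle$ absorbs $\langle m_1\rangle$ and, through $g_1,g_2\ll m_1$ together with $h_2$, also the class of $m_2$.

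The crux that I expect to be the hardest step is to show that this merge loses no ideal, that is, $J\le\langle z\rangle$, so that $x'\llideal z$. This is exactly where compactness of $J$ (equivalently, algebraicity of $S\otimes\{0,\infty\}$) is indispensable: in a non-algebraic setting the complement construction can drop a ``boundary'' piece of the ideal, which is precisely the phenomenon behind \autoref{exa:NotIdealFiltered}, whereas compact ideals behave like clopen sets. Concretely, I would exploit $m_1+m_2\le\infty(m_1^-+\text{(a predecessor of }h_2))$ coming from compactness of $J$ to choose $m_1^-$ and $h_2$ so that $\langle m_1^-+h_2\rangle=J$ exactly. Granting this, $z\ll x,y$ with $x'\llideal z$ verifies the criterion of \autoref{prp:CharIdealFilteredO6O7}, and hence $S$ is ideal-filtered.
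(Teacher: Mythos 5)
Your overall strategy is the right one, and the first two thirds of the plan (verifying the criterion of \autoref{prp:CharIdealFilteredO6O7}, splitting $x'$ via \axiomO{6} into pieces $m_1,m_2\ll x,y$, and using \axiomO{5}-complements to merge them without losing the bound by $x$) closely parallels the paper's proof. But the step you yourself flag as the crux is a genuine gap, and the mechanism you propose for closing it does not work. Your final element is $z=m_1^{-}+h_2$ (up to predecessors), which retains only a way-below predecessor $m_1^{-}$ of $m_1$; to conclude $x'\llideal z$ you must recover the ideal generated by $m_1$ from $m_1^{-}$, i.e.\ you need $m_1\leq\infty m_1^{-}$, which is exactly compactness of the ideal $\langle m_1\rangle$. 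Compactness of $J=\langle m_1+m_2\rangle=\langle w\rangle$ does \emph{not} give this: it only yields $m_1+m_2\leq\infty(m_1^{-}+m_2^{-})$ for suitable predecessors, and after your double \axiomO{6}-split the best bound you get is $m_2^{\circ}\ll g_1+g_2+h_2\leq 2m_1+h_2$ with $g_1,g_2\ll m_1$, so the full element $m_1$ (not $m_1^{-}$) survives on the right-hand side. The inequality $m_1+m_2\leq\infty(m_1^{-}+h_2')$ that you say you would "exploit" is precisely the unproven claim, and nothing in the compactness of $J$ delivers it, since $\langle m_1\rangle$ may well be a non-compact ideal even when $\langle m_1+m_2\rangle$ is compact.

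The repair is to apply the algebraicity \emph{after} the \axiomO{6}-split, directly to the piece that gets truncated: this is what the paper does. There, $x'\ll e''+f'$ with $e''\ll e'\ll e$ and $e,f\leq x,y$, and \autoref{prp:CharLatAlgbraic} is invoked to arrange $e\leq\infty e''$ (one may shrink $e$ so that its ideal is compact and generated by $e''$, at no cost to the other relations). Then a single complement $c$ of $e$ inside $x$ suffices: \axiomO{6} pushes $f'$ into $e+d$ with $d\leq f,c$, and \axiomO{7} merges $e''$ and $d'$ below $y$ and below $e'+d\leq e'+c\leq x$; the ideal is recovered from $x'\ll e''+f'\leq e''+e+d'\leq\infty(e''+d')\leq\infty z$ precisely because $e\leq\infty e''$. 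Your preliminary reduction making $\langle w\rangle$ compact, and your second complement $c_y$, are then both unnecessary. As written, however, the proposal does not constitute a proof of the ideal containment, which is the only place the algebraicity hypothesis is actually consumed.
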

\begin{proof}
In order to apply \autoref{prp:CharIdealFilteredO6O7}, let $x',x,y\in S$ satisfy $x'\ll x\ll 2y$.
We need to find $z$ such that $x'\llideal z\ll x,y$.
First, applying \axiomO{6}, we obtain $e,f\in S$ such that
\[
x' \ll e+f, \quad
e\leq x,y, \andSep
f\leq x,y.
\]

Choose $e'',e',f'\in S$ satisfying
\[
x' \ll e''+f', \quad
e'' \ll e' \ll e, \andSep
f' \ll f.
\]

Using \autoref{prp:CharLatAlgbraic}, we may assume that $e\leq\infty e''$.
Applying \axiomO{5} for $e'\ll e\leq x$, we obtain $c\in S$ such that
\[
e'+c \leq x \leq e+c
\]
and, consequently,
\[
f' \ll f \leq x \leq e+c.
\]

Applying \axiomO{6}, we obtain $d\in S$ such that
\[
f' \ll e+d, \andSep
d\leq f,c.
\]

Choose $d'\in S$ such that
\[
f' \ll e+d', \andSep
d' \ll d.
\]

Then
\[
e' \leq e \leq y, \andSep
d \leq f \leq y.
\]

Applying \axiomO{7} for $e''\ll e'\leq y$ and $d'\ll d\leq y$, we obtain $z\in S$ such that
\[
e'',d'\ll z, \andSep
z \ll y, e'+d.
\]

Using that $d\leq c$, it follows that
\[
z\ll e'+d \leq e'+c \leq x.
\]
Further, since $e\leq\infty e''$, we have
\[
x' 
\ll e''+f'
\leq e''+e+d'
\leq \infty(e''+d')
\leq \infty z,
\]
which shows that $z$ has the desired properties.
\end{proof}

A \ca{} $A$ is said to have \emph{topological dimension zero} if its primitive ideal space $\Prim(A)$ has a basis consisting of compact-open subsets;
see \cite[Remark~2.5(vi)]{BroPed09Limits}.

\begin{prp}
\label{prp:CharTopDimZero}
Let $A$ be a separable \ca.
Then the following are equivalent:
\begin{enumerate}
\item
$A$ has topological dimension zero;
\item
$A\otimes\mathcal{O}_2$ has real rank zero;
\item
the \CuSgp{} $\Cu(A)\otimes\{0,\infty\}$ is algebraic.
\end{enumerate}
\end{prp}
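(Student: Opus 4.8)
The plan is to route all three conditions through the ideal lattice of $\Cu(A)$, which for a \ca{} is naturally isomorphic to the lattice $\mathcal{O}(\Prim(A))$ of open subsets of the primitive ideal space (by \cite{CiuRobSan10CuIdealsQuot} and \cite{AntPerThi18TensorProdCu}). Two preliminary identifications drive the argument. First, as recalled in the proof of \autoref{prp:CharLatAlgbraic}, one has $\Cu(A)\otimes\{0,\infty\}\cong\Lat_f(\Cu(A))$, the \CuSgp{} of singly generated ideals (\cite{AntPerThi18TensorProdCu}). Second, since $A$ is separable, every closed two-sided ideal of $A$ is $\sigma$-unital and hence generated by the class of a single positive element; thus every ideal of $\Cu(A)$ is singly generated, so $\Lat_f(\Cu(A))=\Lat(\Cu(A))\cong\mathcal{O}(\Prim(A))$. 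Combining these, condition~(3) says exactly that the \CuSgp{} $\mathcal{O}(\Prim(A))$, with its frame-theoretic way-below relation, is algebraic.

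With this in hand, I would prove the equivalence of~(1) and~(3) purely order-theoretically. An ideal is a compact element of $\Lat(\Cu(A))$ precisely when the corresponding open set is compact-open in $\Prim(A)$, and a directed supremum in $\mathcal{O}(\Prim(A))$ is just a union. Because $A$ is separable, $\Prim(A)$ is second countable, so every directed family of opens is cofinally an increasing sequence; hence the \CuSgp{} $\mathcal{O}(\Prim(A))$ is algebraic if and only if every open set is a union of compact-open subsets, that is, if and only if the compact-open sets form a basis of $\Prim(A)$. By definition (\cite{BroPed09Limits}) this is exactly topological dimension zero, giving (1)$\Leftrightarrow$(3).

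For the equivalence with~(2), I would pass to $B:=A\otimes\mathcal{O}_2$. Since $\mathcal{O}_2$ is simple and nuclear, $A\otimes\mathcal{O}_2$ has the same ideal structure as $A$, so $\Prim(B)\cong\Prim(A)$ and in particular $B$ has topological dimension zero if and only if $A$ does. Moreover $B$ is separable and, being $\mathcal{O}_2$-absorbing, is (strongly) purely infinite. The key input is then the characterization of real rank zero for purely infinite \ca{s} due to Pasnicu and R\o{}rdam: a separable purely infinite \ca{} has real rank zero if and only if its primitive ideal space admits a basis of compact-open subsets. Applying this to $B$ yields that $B$ has real rank zero if and only if $A$ has topological dimension zero, i.e.\ (2)$\Leftrightarrow$(1), completing the cycle. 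Alternatively, since $\Cu(A\otimes\mathcal{O}_2)\cong\Cu(A)\otimes\{0,\infty\}$ (using that $\Cu$ of a purely infinite \ca{} recovers its ideal lattice), one can establish (2)$\Leftrightarrow$(3) directly, real rank zero of $B$ corresponding to algebraicity of $\Cu(B)$.

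The routine part is the order-theoretic translation in the second paragraph; the genuine content, and the main obstacle, lies in the third paragraph, namely invoking (and verifying the hypotheses of) the purely infinite real rank zero theorem together with the behaviour of $\Prim$ and of pure infiniteness under tensoring with $\mathcal{O}_2$. One must also be careful that the abstract \CuSgp{} notion of algebraicity (suprema of \emph{increasing sequences} of compact elements) genuinely matches the topological notion of a compact-open basis; this is precisely where separability, through second countability of $\Prim(A)$, is used to reduce arbitrary directed unions to countable increasing ones.
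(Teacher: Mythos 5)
Your argument is correct, but it is organized quite differently from the paper's. The paper proves (1)$\Leftrightarrow$(2) by citing \cite[Corollary~4.3]{PasRor07PIRR0} as a black box, gets (2)$\Rightarrow$(3) from $\Cu(A\otimes\mathcal{O}_2)\cong\Cu(A)\otimes\{0,\infty\}$ together with the fact that real rank zero forces an algebraic Cuntz semigroup, and then closes the cycle with an element-level proof of (3)$\Rightarrow$(1): after reducing to the stable case, it uses \autoref{prp:CharLatAlgbraic} to produce, inside any singly generated ideal, positive elements $b$ with $y'\ll[b]\ll\infty y'$, and invokes \cite[Proposition~3.1]{TikTom15CuSgpNonunital} to see that the resulting ideals $J_b$ have compact primitive ideal spectrum and form a basis. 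You instead collapse (1)$\Leftrightarrow$(3) into a single lattice-theoretic statement: since $A$ is separable, every ideal of $\Cu(A)$ is singly generated, so $\Cu(A)\otimes\{0,\infty\}\cong\Lat_f(\Cu(A))=\Lat(\Cu(A))\cong\mathcal{O}(\Prim(A))$, and algebraicity becomes the existence of a compact-open basis; your Lindel\"{o}f argument matching sequential with directed way-below is exactly where separability enters and is the honest substitute for the paper's appeal to \cite{TikTom15CuSgpNonunital}. For (1)$\Leftrightarrow$(2) you essentially re-derive the cited corollary from the main theorem of Pasnicu--R{\o}rdam, using that $A\otimes\mathcal{O}_2$ is strongly purely infinite with $\Prim(A\otimes\mathcal{O}_2)\cong\Prim(A)$; that is legitimate, though it carries the burden of verifying pure infiniteness of the tensor product, which the paper avoids. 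One caveat: your closing ``alternatively'' for (2)$\Leftrightarrow$(3) is not an independent route, since the implication from algebraicity of $\Cu(B)$ back to real rank zero of $B$ is not formal even for purely infinite $B$ --- it is again the Pasnicu--R{\o}rdam theorem. On balance, your approach gives a cleaner conceptual explanation of why condition~(3) is purely a statement about $\Prim(A)$, while the paper's stays closer to the Cuntz-class manipulations used in the rest of the section.
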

\begin{proof}
The equivalence of~(1) and~(2) was shown in \cite[Corollary~4.3]{PasRor07PIRR0}.

If $A\otimes\mathcal{O}_2$ has real rank zero, then $\Cu(A\otimes\mathcal{O}_2)$ is algebraic; see \cite{CowEllIva08CuInv}.
By \cite[Corollary~7.2.15]{AntPerThi18TensorProdCu}, we have $\Cu(A\otimes\mathcal{O}_2)\cong\Cu(A)\otimes\{0,\infty\}$, which shows that~(2) implies~(3).

To show that~(3) implies~(1), we may assume that $A$ is stable.  
Then, for every $b\in A_+$ such that $y'\ll [b]\ll \infty y'$ for some $y'$ in $\Cu (A)$, let $J_b$ be the ideal generated by $b$ in $A$. 
Denote by $\mathcal{B}$ the collection of all such ideals $J_b$.

Note that, by \cite[Proposition~3.1]{TikTom15CuSgpNonunital}, $\Prim(J_b)$ is compact for every $J_b\in \mathcal{B}$. 
Identify $\Prim(J_b)$ with an open subset in $\Prim (A)$; 
see, for example, \cite[II.6.5.4]{Bla06OpAlgs}. 
We claim that the family
\[
\big\{ \Prim(J_b) : J_b\in\mathcal{B} \big\}
\]
is a basis of $\Prim(A)$.

To see this, it is enough to check that every singly generated ideal of $A$ can be written as a union of ideals in $\mathcal{B}$, since it is readily checked that the primitive ideal spaces of such ideals form a basis in $\Prim(A)$.

Thus, let $I\subseteq A$ be an ideal generated by some $a\in A_+$, and set $x:=[a]\in \Cu(A)$.
For every $\varepsilon >0$, consider the element $(a-\varepsilon )_+$ and note that $\Prim (I)$ is the increasing union of the primitive ideal spaces of the ideals generated by each of these elements. Consequently, we only need to show that for every $\varepsilon >0$ there exists $J_b\in\mathcal{B}$ with $(a-\varepsilon )_+\in J_b$ and $J_b\subseteq I$.

Take $\varepsilon >0$, and let $x'$ be the class of $(a-\varepsilon )_+$ in $\Cu (A)$. 
Using (3) and \autoref{prp:CharLatAlgbraic}, we obtain $y',y\in\Cu (A)$ satisfying
\[
 x'\ll y'\ll y\ll x,\andSep y\ll \infty y'.
\]

Let $b\in A_+$ be such that $y=[b]$. 
Then, $J_b\in\mathcal{B}$ and, since $x'=[(a-\varepsilon )_+]\ll [b]$, it follows that $(a-\varepsilon )_+\in J_b$. Further, since we also have $[b]\ll x=[a]$, we also get $J_b\subseteq I$, as desired.
\end{proof}

A \ca{} $A$ is said to have the \emph{ideal property} if for every pair of ideals $I\subseteq J\subseteq A$ with $I\neq J$, there exists a projection $p\in J\setminus I$.
This implies that $A$ has topological dimension zero;
see \cite[Theorem~2.8]{PasPhi17WeakIPTopDimZero}.

\begin{thm}
\label{prp:IdealFilteredTopDimZero}
Let $A$ be a separable \ca{} that has topological dimension zero.
(For example, $A$ has the ideal property.)
Then $\Cu(A)$ is ideal-filtered.
\end{thm}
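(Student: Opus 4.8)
The plan is to assemble this theorem from two results already established in the excerpt: the sufficient condition for ideal-filteredness in \autoref{prp:LatAlgebraicIdealFiltered} and the characterization of topological dimension zero in \autoref{prp:CharTopDimZero}. First I would set $S:=\Cu(A)$ and verify that it satisfies the standing hypotheses of \autoref{prp:LatAlgebraicIdealFiltered}, namely \axiomO{5}-\axiomO{7}. These hold for the Cuntz semigroup of every \ca{}: property~\axiomO{5} by \cite[Proposition~4.6]{AntPerThi18TensorProdCu}, property~\axiomO{6} by \cite[Proposition~5.1.1]{Rob13Cone}, and property~\axiomO{7} by \cite[Proposition~2.2]{AntPerRobThi21Edwards}, exactly as recorded in \autoref{sec:Prelims}.

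Next I would invoke \autoref{prp:CharTopDimZero}. Since $A$ is separable and has topological dimension zero, the equivalence of statements~(1) and~(3) there yields that $\Cu(A)\otimes\{0,\infty\}$ is algebraic. Feeding this together with \axiomO{5}-\axiomO{7} into \autoref{prp:LatAlgebraicIdealFiltered} gives at once that $\Cu(A)$ is ideal-filtered, which is the desired conclusion. The parenthetical remark about the ideal property then follows without extra work: as noted in the paragraph preceding the statement, the ideal property implies topological dimension zero by \cite[Theorem~2.8]{PasPhi17WeakIPTopDimZero}, so such algebras are already covered by the main hypothesis.

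I do not expect any genuine obstacle, since the substantive arguments have been carried out in proving \autoref{prp:LatAlgebraicIdealFiltered} and \autoref{prp:CharTopDimZero}; this theorem is a direct combination of the two. The only point deserving a moment's attention is confirming that the axioms \axiomO{5}-\axiomO{7} required by \autoref{prp:LatAlgebraicIdealFiltered} are automatic for Cuntz semigroups of \ca{s}, which is standard and cited above. Accordingly, the proof should be short, amounting to little more than a citation of these two results.
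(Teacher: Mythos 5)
Your proposal is correct and matches the paper's own proof exactly: both verify that $\Cu(A)$ satisfies \axiomO{5}-\axiomO{7}, obtain algebraicity of $\Cu(A)\otimes\{0,\infty\}$ from \autoref{prp:CharTopDimZero}, and conclude via \autoref{prp:LatAlgebraicIdealFiltered}. No gaps.
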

\begin{proof}
The Cuntz semigroup of every \ca{} satisfies \axiomO{5}-\axiomO{7}.
Further, it follows from \autoref{prp:CharTopDimZero} that $\Cu(A)\otimes\{0,\infty\}$ is algebraic.
Thus, we may apply \autoref{prp:LatAlgebraicIdealFiltered} to deduce that $\Cu(A)$ is ideal-filtered.
\end{proof}

\begin{pbm}
Which Cuntz semigroups of \ca{s} are ideal-filtered?
\end{pbm}

\section{Property~(V)}
\label{sec:V}

In this section, we introduce property~(V) for \CuSgp{s};
see \autoref{dfn:PrpV}. 
We show in \autoref{prp:DivImplV} that $(2,\omega)$-divisible \CuSgp{s} have property~(V).
Conversely, we will see in \autoref{prp:MainThmCu} that property~(V) together with ideal-filteredness can be used to characterize when a weakly $(2,\omega)$-divisible \CuSgp{} is $(2,\omega)$-divisible.

We show that the Cuntz semigroup of a \ca{} $A$ has property~(V) whenever~$A$ is residually stably finite (\autoref{prp:V-RSF}), in particular if $A$ has stable rank one, if $A$ has real rank zero (\autoref{prp:V-rr0}), and if $A$ is separable and has topological dimension zero and strict comparison of positive elements (\autoref{prp:V-AlmUnp}).
We do not know of any \ca{} whose Cuntz semigroup does not have property~(V);
see \autoref{qst:V}.

\begin{dfn}
\label{dfn:PrpV}
A \CuSgp{} $S$ is said to have \emph{property (V)} if for all elements $c,d_j',d_j,x\in S$ satisfying $d_j'\ll d_j$ for $j=1,2$, and 
\[
d_1,d_2 \ll c, \andSep
c+d_1,c+d_2\ll x,
\] 
there exist $y,z\in S$ such that 
\[
y+z\leq x, \andSep 
d_1'+d_2' \leq \infty y, \infty z.
\]
\end{dfn}

\begin{rmk}
\label{rmk:PrpV}
A \CuSgp{} $S$ is said to be \emph{sup-semilattice ordered} if suprema exist and we have $x+(y\vee z)=(x+y)\vee(x+z)$ for every $x,y,z\in S$; see, for example, \cite{Vil21arX:CommCuAI}.

If $S$ is sup-semilattice ordered, then it satisfies property~(V) with $y=c$ and $z=d_1\vee d_2$.
One can therefore consider property~(V) as a (very weak) form of being sup-semilattice ordered.
\end{rmk}

\begin{prp}
\label{prp:DivImplV}
Every $(2,\omega)$-divisible \CuSgp{} has property (V).
\end{prp}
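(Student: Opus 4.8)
The plan is to take $y$ and $z$ to be \emph{equal}, namely $y=z=w$ for a suitable single element $w$ obtained from $(2,\omega)$-divisibility. With this choice the two conditions $d_1'+d_2'\leq\infty y$ and $d_1'+d_2'\leq\infty z$ collapse into one, $d_1'+d_2'\leq\infty w$, and the condition $y+z\leq x$ becomes $2w\leq x$. So the entire statement reduces to producing a single $w\in S$ with $2w\leq x$ and $d_1'+d_2'\leq\infty w$.

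The first observation I would record is that $d_1'$ and $d_2'$ are both dominated by $c$: indeed $d_j'\ll d_j\ll c$ gives $d_j'\leq c$ for $j=1,2$, whence $d_1'+d_2'\leq 2c\leq\infty c$. Thus it suffices to arrange $c\leq\infty w$, since this forces $\infty c\leq\infty w$ and hence $d_1'+d_2'\leq\infty w$, together with the halving condition $2w\leq x$.

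To obtain such a $w$, I would simply apply $(2,\omega)$-divisibility to the relation $c+d_1\ll x$, which is one of the hypotheses. This yields $w\in S$ with $2w\leq x$ and $c+d_1\leq\infty w$. Since $c\leq c+d_1\leq\infty w$, we get $\infty c\leq\infty(\infty w)=\infty w$, and therefore $d_1'+d_2'\leq 2c\leq\infty c\leq\infty w$. Setting $y=z=w$ then gives $y+z=2w\leq x$ and $d_1'+d_2'\leq\infty w=\infty y=\infty z$, as required.

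I do not expect any genuine obstacle here; the only content is the chain of $\infty$-inequalities, which is routine in a \CuSgp{}. It is perhaps worth remarking that this argument uses only $d_j'\leq c$ and the single relation $c+d_1\ll x$, not the symmetric hypothesis $c+d_2\ll x$. The symmetric form of property~(V) is natural in view of the sup-semilattice picture, where one takes $y=c$ and $z=d_1\vee d_2$ (see \autoref{rmk:PrpV}); the point of the present proof is that divisibility lets us dispense with the join $d_1\vee d_2$ altogether, by halving $x$ and reusing the same half for both lower bounds.
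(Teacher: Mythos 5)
Your proof is correct and takes essentially the same route as the paper: both arguments apply $(2,\omega)$-divisibility once to produce a single element $w$ with $2w\leq x$ and then set $y=z=w$, the only cosmetic difference being that the paper divides a common way-below upper bound $x'$ of $c+d_1$ and $c+d_2$ whereas you divide $c+d_1$ directly and observe that $c\leq\infty w$ already suffices. Your closing remark that the hypothesis $c+d_2\ll x$ is not needed is accurate.
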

\begin{proof}
Let $S$ be a $(2,\omega)$-divisible \CuSgp{}.
To verify that $S$ has property~(V), let $c,d_j',d_j,x\in S$ satisfy $d_j'\ll d_j$ for $j=1,2$, and 
\[
d_1,d_2 \ll c, \andSep
c+d_1,c+d_2\ll x.
\] 

Now, take $x'$ with $c+d_1,c+d_2\leq x' \ll x$.
Using that $S$ is $(2,\omega)$-divisible, we obtain $u\in S$ such that
\[
2u \leq x, \andSep 
x'\leq \infty u.
\]

Then, $y:=u$ and $z:=u$ have the desired properties.
\end{proof}

We show below that property~(V) holds for Cuntz semigroups of large classes of \ca{s}.
It is possible that property~(V) always holds:

\begin{qst}
\label{qst:V}
Does the Cuntz semigroup of every \ca{} have property~(V)?
\end{qst}

A \CuSgp{} $S$ is said to be \emph{stably finite} if for all $x,y\in S$ with $x+y\ll x$ we have $y=0$. Note that this is more restrictive than the definition in \cite{AntPerThi18TensorProdCu}, although both notions agree if $S$ is simple.

Further, we will say that $S$ is \emph{residually stably finite} if every quotient of $S$ is stably finite.

A stably finite \ca{} has a stably finite Cuntz semigroup, and a residually stably finite \ca{} has a residually stably finite Cuntz semigroup.

\begin{lma}
\label{prp:V-RSF-pre}
Let $S$ be a residually stably finite \CuSgp{} satisfying \axiomO{5}.
Then~$S$ has property~(V).
\end{lma}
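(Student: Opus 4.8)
The plan is to verify property~(V) directly by choosing $y$ to be a suitable approximant of $c$ and $z$ to be a ``complement'' of $c$ inside $x$ manufactured by \axiomO{5}. So let $c,d_1',d_1,d_2',d_2,x\in S$ be as in \autoref{dfn:PrpV}, with $d_j'\ll d_j$, $\,d_1,d_2\ll c$, and $c+d_1,c+d_2\ll x$. Since $d_1,d_2\ll c$, I would first use \axiomO{2} to fix a $\ll$-increasing sequence with supremum $c$ and select a single member $c'\ll c$ of it with $d_1,d_2\leq c'$. This $c'$ will serve as $y$, and it already satisfies the fullness condition on that side, because $d_1'+d_2'\leq d_1+d_2\leq 2c'\leq\infty c'$.

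Next I would apply \axiomO{5} to the data $c'\ll c$, $\,d_1'\ll d_1$ and $c+d_1\leq x$ (the latter coming from $c+d_1\ll x$), obtaining $w\in S$ with
\[
d_1'\ll w, \andSep c'+w\leq x\leq c+w.
\]
Setting $z:=w$, the inequality $c'+w\leq x$ gives the required $y+z\leq x$, while $d_1'\leq w\leq\infty w$ supplies half of the fullness of $z$.

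The crux is to show $d_2\leq\infty w$, which then gives $d_2'\leq d_2\leq\infty w=\infty z$ and completes the fullness of $z$. This is where residual stable finiteness enters. I would pass to the quotient \CuMor{} $\pi\colon S\to S/\langle w\rangle$, whose codomain is stably finite by hypothesis. Since $\pi(w)=0$, the relations $c\leq x\leq c+w$ yield $\pi(c)\leq\pi(x)\leq\pi(c)$, so $\pi(x)=\pi(c)$. Applying $\pi$ to $c+d_2\ll x$, and using that \CuMor{s} preserve the way-below relation, gives $\pi(c)+\pi(d_2)\ll\pi(x)=\pi(c)$; stable finiteness of $S/\langle w\rangle$ then forces $\pi(d_2)=0$, i.e.\ $d_2\in\langle w\rangle$, which is exactly $d_2\leq\infty w$.

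I expect this cancellation to be the main obstacle, precisely because the weaker relation $\pi(c)+\pi(d_2)\leq\pi(c)$ would \emph{not} suffice: in a stably finite \CuSgp{} a noncompact ``infinite'' element can absorb a nonzero element under $\leq$. What rescues the argument is the identity $\pi(x)=\pi(c)$, which upgrades the hypothesis $c+d_2\ll x$ into a genuine way-below relation $\pi(c)+\pi(d_2)\ll\pi(c)$, to which the definition of stable finiteness applies verbatim. Once $d_2\leq\infty w$ is secured, the elements $y:=c'$ and $z:=w$ satisfy $y+z\leq x$ and $d_1'+d_2'\leq\infty y,\infty z$, establishing property~(V); note that only \axiomO{5} and residual stable finiteness are used.
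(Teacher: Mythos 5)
Your proposal is correct and uses essentially the same mechanism as the paper's proof: produce a complement via \axiomO{5}, pass to the quotient by the ideal it generates, and use stable finiteness there to cancel $c$ from $c+d_j\ll x$. The only (immaterial) difference is that the paper takes the $y=0$ form of \axiomO{5} applied to $c''\ll c'\leq x$ and runs the quotient argument symmetrically for both $d_1$ and $d_2$, whereas you use the full form of \axiomO{5} to get $d_1'\ll w$ directly and invoke the quotient only for $d_2$.
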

\begin{proof}
To verify property (V), let $c,d_j',d_j,x\in S$ satisfy $d_j'\ll d_j$ for $j=1,2$, and 
\[
d_1,d_2 \ll c, \andSep
c+d_1,c+d_2\ll x.
\] 

Choose $c'',c'\in S$ such that
\[
d_1,d_2 \ll c'' \ll c' \ll c.
\]

Applying \axiomO{5} for $c'' \ll c'\leq x$, we find $y$ such that 
\[
c''+y \leq x \leq c'+y.
\]

Given $j\in\{1,2\}$, we have $c+d_j \leq x \leq c'+y$.
In the quotient $S/\langle y \rangle$ one has 
\[
[c]+[d_j] \leq [c'] \ll [c].
\]

Since $S/\langle y \rangle$ is stably finite, we deduce that $[d_j]=0$, and thus $d_j\leq \infty y$.
It follows that $y$ and $z:=c''$ have the desired properties to verify property~(V).
\end{proof}

\begin{thm}
\label{prp:V-RSF}
Let $A$ be a residually stably finite \ca{}.
Then $\Cu(A)$ has property~(V).
\end{thm}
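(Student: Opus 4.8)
The plan is to obtain this as an immediate consequence of \autoref{prp:V-RSF-pre}, whose conclusion is precisely property~(V). To apply that lemma to the \CuSgp{} $S=\Cu(A)$, I would verify its two hypotheses in turn.

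First, $\Cu(A)$ satisfies \axiomO{5}: this holds for the Cuntz semigroup of every \ca{} by \cite[Proposition~4.6]{AntPerThi18TensorProdCu}, so nothing needs to be checked here.

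Second, $\Cu(A)$ must be residually stably finite. This is exactly the implication recorded in the discussion preceding \autoref{prp:V-RSF-pre}, namely that a residually stably finite \ca{} has a residually stably finite Cuntz semigroup. Should one wish to spell it out: under the ideal correspondence every quotient of $\Cu(A)$ has the form $\Cu(A)/\Cu(I)\cong\Cu(A/I)$, each quotient $A/I$ is stably finite by hypothesis, and stable finiteness of a \ca{} passes to its Cuntz semigroup; hence every quotient of $\Cu(A)$ is stably finite.

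With both hypotheses in hand, \autoref{prp:V-RSF-pre} yields property~(V) for $\Cu(A)$. There is essentially no obstacle: all of the real content lives in \autoref{prp:V-RSF-pre}, and the present theorem is merely its transcription to the level of \ca{s}, using the standard facts that $\Cu$ always satisfies \axiomO{5} and carries residual stable finiteness from $A$ to $\Cu(A)$.
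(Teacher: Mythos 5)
Your proposal is correct and follows exactly the paper's own argument: cite \axiomO{5} for Cuntz semigroups of arbitrary \ca{s}, note that residual stable finiteness passes from $A$ to $\Cu(A)$ via the ideal correspondence and $\Cu(A)/\Cu(I)\cong\Cu(A/I)$, and then invoke \autoref{prp:V-RSF-pre}. Nothing to add.
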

\begin{proof}
The Cuntz semigroup of every \ca{} satisfies \axiomO{5};
see \cite[Proposition~4-6]{AntPerThi18TensorProdCu}.
Since $A$ is residually stably finite, so is $\Cu(A)$.
Now the result follows from \autoref{prp:V-RSF-pre}.
\end{proof}

\begin{lma}
\label{prp:RefinementV}
Let $M$ be a refinement monoid, equipped with the algebraic pre-order~$\leq$.
Let $a,b_1,b_2,c\in M$ satisfy
\[
b_1 ,b_2\leq a, \andSep
a+b_1,a+b_2\leq c.
\]

Then there exist $e,f\in M$ such that
\[
e+f\leq c, \quad
b_1,b_2\leq e, \andSep
b_1,b_2\leq 2f.
\]
\end{lma}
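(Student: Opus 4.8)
The plan is to uncover the combinatorial relationship between $b_1,b_2$ and $a$ by one application of the refinement property, and then to place $e$ and $f$ inside $c$ using the two hypotheses $a+b_1\le c$ and $a+b_2\le c$.

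Since $b_1,b_2\le a$, I would write $a=b_1+a_1=b_2+a_2$ and apply refinement to the identity $b_1+a_1=b_2+a_2$. This yields $p,q,r,s\in M$ with
\[
b_1=p+q,\quad b_2=p+r,\quad a_1=r+s,\quad a_2=q+s,
\]
so that $a=p+q+r+s$, the element $p$ is the ``overlap'' of $b_1$ and $b_2$, and $m:=p+q+r=b_1+r=b_2+q$ is a common upper bound of $b_1$ and $b_2$ lying below $a$. Taking $e:=m$ gives $b_1,b_2\le e$ at once; moreover $e+b_1=m+b_1=2p+2q+r\le a+b_1\le c$ and symmetrically $e+b_2\le a+b_2\le c$, so each of $f=b_1$ and $f=b_2$ already meets the budget constraint $e+f\le c$.

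The crux is therefore to produce a \emph{single} $f$ with $b_1,b_2\le 2f$ while keeping $e+f\le c$. The choice $f=b_1$ covers $b_1$ but covers $b_2$ only when $b_2\le 2b_1$, and symmetrically for $f=b_2$; neither inequality holds in general. Instead I would build $f$ by refining, against one another, the two complements $s+b_1$ and $s+b_2$ of $m$ in $c$ (these are complements because $c\ge a+b_i=m+(s+b_i)$), so as to obtain an $f$ that meets the ``$q$-direction'' of $b_1$ and the ``$r$-direction'' of $b_2$ simultaneously. The factor $2$ is essential here: requiring a single $f\ge b_1,b_2$ sitting below the complement of $m$ is tantamount to having a join $b_1\vee b_2$ with $a+(b_1\vee b_2)\le c$ available, which fails in a refinement monoid that is not lattice-ordered. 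With only $b_1,b_2\le 2f$ demanded, $f$ may be taken informally ``half the size of the join,'' and the refinement of the two complements produces such an element.

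The main obstacle is precisely this last construction. The hypotheses control $c$ only \emph{separately} along $b_1$ and along $b_2$, and in the absence of suprema one cannot combine them into a single bound $c\ge a+(b_1\vee b_2)$; the refinement property applied to the two complements of $m$, together with the slack afforded by the factor $2$, is what substitutes for the missing join. Once $f$ is in hand, the three conclusions $e+f\le c$, $b_1,b_2\le e$, and $b_1,b_2\le 2f$ are checked directly, the budget inequality following from whichever of the two hypotheses matches the complement used to construct $f$.
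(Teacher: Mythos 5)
Your first step is fine as far as it goes: refining $b_1+a_1=b_2+a_2$ to get the overlap decomposition $b_1=p+q$, $b_2=p+r$, $a=p+q+r+s$, and taking $e:=p+q+r$ does yield $b_1,b_2\leq e$ and $e+b_i\leq a+b_i\leq c$. You have also correctly located the crux: everything hinges on producing one element $f$ with $b_1,b_2\leq 2f$ and $e+f\leq c$. But that construction is the entire content of the lemma, and your proposal does not actually carry it out. The device you describe --- ``refining, against one another, the two complements $s+b_1$ and $s+b_2$ of $m$ in $c$'' --- is not a legitimate application of the refinement axiom: $s+b_1$ and $s+b_2$ are in general \emph{different} elements (they are complements of $m$ inside $a+b_1$ and inside $a+b_2$ respectively, not two decompositions of one and the same element), and refinement only applies to an equality of two finite sums. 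If instead you try to refine $c=m+(s+b_1)+t_1=m+(s+b_2)+t_2$, you cannot cancel the common summand $m$ in a general monoid, and no mechanism is indicated for extracting from the resulting matrix an $f$ that dominates half of $b_1$ \emph{and} half of $b_2$ while still fitting next to $e$ inside $c$.

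The missing ingredient is a concrete ``halving with dominance'' statement. The paper's proof writes $c=r+a+b_1=s+a+b_2$, refines the single equation $s+a+b_2=(r+a)+b_1$ (both sides equal $c$, so this is a genuine $3\times 2$ refinement with rows $s,a,b_2$ and columns $r+a,\,b_1$), and then invokes \cite[Lemma~3.1]{OrtPerRor11CoronaRefinement} --- using $r+a\geq a\geq b_1$ --- to arrange that the two pieces $z_{2,1},z_{2,2}$ of the row $a=z_{2,1}+z_{2,2}$ satisfy $z_{2,1}\geq z_{2,2}$. Setting $f:=z_{2,1}$ then gives $b_1,b_2\leq a=z_{2,1}+z_{2,2}\leq 2f$, and $e:=z_{1,2}+z_{2,2}+z_{3,2}+z_{3,1}=b_1+z_{3,1}$ contains $b_1$ and $b_2$ with $e+f\leq b_1+(r+a)=c$. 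Without some such dominance lemma (or an equivalent argument showing $a\leq 2f$ for an $f$ sitting in the right place inside $c$), the ``slack afforded by the factor $2$'' cannot be exploited, and your argument does not close.
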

\begin{proof}
Choose $r,s\in M$ such that
\[
r+a+b_1 = s+a+b_2 = c.
\]

Applying refinement for $s+a+b_2 = (r+a) + b_1$, we obtain a refinement matrix:
\[
\begin{array}{|c|c|c|}
\cline{2-3}
\multicolumn{1}{l|}{} & r+a & b_1 \\
\hline
s & z_{1,1} & z_{1,2} \\
\hline
a & z_{2,1} & z_{2,2} \\
\hline
b_2 & z_{3,1} & z_{3,2} \\
\hline
\end{array}
\]

Using that $r+a\geq a\geq b_1$, we may apply \cite[Lemma~3.1]{OrtPerRor11CoronaRefinement} to guarantee that $z_{2,1}\geq z_{2,2}$.
(The assumption that the refinement monoid is conical is not used in the proof of \cite[Lemma~3.1]{OrtPerRor11CoronaRefinement}.)
Now set
\[
e := z_{1,2} + z_{2,2} + z_{3,2} + z_{3,1}, \andSep
f:= z_{2,1}.
\]

We clearly have $e+f\leq c$ and $b_1,b_2\leq e$.
Using that $z_{2,1}\geq z_{2,2}$, we have
\[
b_1,b_2 \leq a = z_{2,1}+z_{2,2} \leq 2 z_{2,1} = 2f,
\]
which shows that $e$ and $f$ have the claimed properties.
\end{proof}

\begin{thm}
\label{prp:V-rr0}
Cuntz semigroups of \ca{s} of real rank zero have property~(V).
\end{thm}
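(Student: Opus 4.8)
The plan is to exploit the special structure of the Cuntz semigroup in the real rank zero case and then reduce property~(V) to the refinement statement already established in \autoref{prp:RefinementV}. For a \ca{} $A$ of real rank zero, $\Cu(A)$ is \emph{algebraic} with the approximating compact elements realized by projections: every element is the supremum of an increasing sequence of classes of projections in $A\otimes\KK$ (see \cite{CowEllIva08CuInv}). An easy consequence is that the compact elements of $\Cu(A)$ are \emph{precisely} the classes of projections, since any compact element lies below, hence equals, a term of such an approximating sequence. Thus the compact elements form a copy of the Murray--von Neumann monoid $V(A\otimes\KK)$, on which the order inherited from $\Cu(A)$ coincides with the algebraic preorder (for projections $p,q$ one has $p\precsim q$ if and only if $p$ is Murray--von Neumann equivalent to a subprojection of $q$, so $p\precsim q$ is equivalent to $[p]+[r]=[q]$ for some $[r]$). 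Since $A\otimes\KK$ again has real rank zero, hence is an exchange ring, $V(A\otimes\KK)$ is a \emph{refinement monoid}, which is exactly the setting of \autoref{prp:RefinementV}.

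Concretely, given $c,d_1',d_1,d_2',d_2,x\in\Cu(A)$ as in \autoref{dfn:PrpV}, I would first approximate from below by projection classes. Using that $d_1,d_2\ll c$ and writing $c$ as a supremum of an increasing sequence of projection classes, one finds a single compact $c_0$ with $d_1,d_2\leq c_0\ll c$. Using $d_j'\ll d_j$, one finds compact $p_j$ with $d_j'\leq p_j\leq d_j$, and then $p_j\leq c_0$. Finally, since $c_0\leq c$ and $p_j\leq d_j$ give $c_0+p_j\leq c+d_j\ll x$, and since $c_0+p_j$ is again a (compact) projection class, one finds a single compact $x_0\leq x$ with $c_0+p_1,c_0+p_2\leq x_0$. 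All of $c_0,p_1,p_2,x_0$ then lie in the refinement monoid $V(A\otimes\KK)$.

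Now I would apply \autoref{prp:RefinementV} inside $V(A\otimes\KK)$ with $a:=c_0$, $b_1:=p_1$, $b_2:=p_2$ and top element $x_0$; the hypotheses $p_1,p_2\leq c_0$ and $c_0+p_j\leq x_0$ hold by construction. This yields $e,f\in V(A\otimes\KK)$ with
\[
e+f\leq x_0, \quad p_1,p_2\leq e, \andSep p_1,p_2\leq 2f.
\]
Setting $y:=e$ and $z:=f$ then witnesses property~(V): indeed $y+z=e+f\leq x_0\leq x$, while $d_1'+d_2'\leq p_1+p_2\leq 2e\leq\infty y$ and $d_1'+d_2'\leq p_1+p_2\leq 4f\leq\infty z$.

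The downward approximation by projections in the second paragraph is routine once algebraicity is available. The two facts carrying real weight are that real rank zero forces the compact elements of $\Cu(A)$ to be exactly the projection classes, so that the reduction stays inside $V(A\otimes\KK)$, and that $V(A\otimes\KK)$ is a refinement monoid; the latter is where I expect the main obstacle to lie. I would handle it by citing that real rank zero \ca{s} are exchange rings, for which the monoid of projections is known to satisfy Riesz refinement, and by noting that the order on $V(A\otimes\KK)$ used in \autoref{prp:RefinementV} (the algebraic preorder) agrees with the Cuntz order, which for projections is standard.
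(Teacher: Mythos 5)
Your proposal is correct and follows essentially the same route as the paper's proof: approximate the data from below by projection classes using that these are sup-dense in $\Cu(A)$ for real rank zero, pass to the refinement monoid $V(A\otimes\KK)$ (the paper cites \cite[Lemma~2.3]{AraPar96RefMonWkComparability} rather than the exchange-ring route, and likewise notes that Cuntz and Murray--von Neumann subequivalence agree on projections), and conclude via \autoref{prp:RefinementV} with the same choice of witnesses $y$ and $z$.
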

\begin{proof}
Let $A$ be a \ca{} of real rank zero.
Let $C\subseteq\Cu(A)$ denote the monoid of Cuntz classes of projections in $A\otimes\KK$.
Since $A$ has real rank zero, $C$ is sup-dense in $\Cu(A)$.
To verify that $\Cu(A)$ has property (V), let $c,d_j',d_j,x\in\Cu(A)$ satisfy $d_j'\ll d_j$ for $j=1,2$, and 
\[
d_1,d_2 \ll c, \andSep
c+d_1,c+d_2\ll x.
\]

Choose $c',x'\in\Cu(A)$ such that
\[
d_1,d_2 \leq c' \ll c, \andSep
c+d_1,c+d_2 \leq x' \ll x.
\]

Using that $C$ is sup-dense, we obtain projections $p,q_1,q_2,r\in A\otimes\KK$ such that
\[
c' \leq [p] \leq c, \quad
d_1' \leq [q_1] \leq d_1, \quad
d_2' \leq [q_2] \leq d_2, \andSep
x' \leq [r] \leq x.
\]

Abusing notation, we also use $[g]$ to denote the equivalence class of a projection $g\in A\otimes\KK$ in the Murray-von Neumann semigroup $V(A)$.
Among projections, Cuntz subequivalence agrees with Murray-von Neumann subequivalence (but Cuntz equivalence need not imply Murray-von Neumann equivalence).
Thus, in $V(A)$ we have
\[
[q_1],[q_2] \leq [p], \andSep
[p]+[q_1], [p]+[q_1] \leq [r].
\]

By \cite[Lemma~2.3]{AraPar96RefMonWkComparability}, $V(A)$ is a refinement monoid.
Applying \autoref{prp:RefinementV}, we obtain projections $e,f\in A\otimes\KK$ such that
\[
[e]+[f] \leq [r], \quad
[q_1],[q_2] \leq [e], \andSep
[q_1],[q_2] \leq 2[f].
\]
Then the Cuntz classes $y:=[e]$ and $z:=[f]$ have the desired properties.
\end{proof}

\begin{rmk}
If $A$ is a \ca{} of real rank zero, then its Murray-von Neumann semigroup $V(A)$ is a refinement monoid by \cite[Lemma~2.3]{AraPar96RefMonWkComparability}.
One is tempted to conclude that the monoid $\Cu(A)_c$ of compact elements in $\Cu(A)$ is a refinement monoid as well.
However, this is not clear, since $\Cu(A)_c$ is `only' isomorphic to the maximal antisymmetric quotient of $V(A)$, and there exist examples of refinement monoids whose maximal antisymmetric quotient is not a refinement monoid, \cite{Mor02RefinementMaxAntisymQuot}.
On the other hand, we do not know if the example from \cite{Mor02RefinementMaxAntisymQuot} can be realized as the Murray-von Neumann semigroup of a \ca.
\end{rmk}

\begin{qst}
Is the monoid of compact elements in the Cuntz semigroup of a real rank zero \ca{} a refinement monoid?
\end{qst}

Recall from \cite[Section~3]{Ror92StructureUHF2} that a \CuSgp{} $S$ is \emph{almost unperforated} if, given $x,y\in S$, we have $x\leq y$ whenever $(k+1)x\leq ky$ for some $k\in\NN$.

\begin{lma}
\label{prp:V-AlmUnp-pre}
Let $S$ be a \CuSgp{} satisfying \axiomO{5}-\axiomO{8}.
Assume that $S$ is almost unperforated and that $S\otimes \{ 0,\infty \}$ is algebraic.
Then~$S$ has property~(V).
\end{lma}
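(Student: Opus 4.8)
The plan is to reduce the statement to the construction of a single ``halved'' element and then to invoke almost unperforation together with the algebraic ideal structure. The first, purely formal, observation is that the hypotheses already force $d_1+d_2\le x$: indeed $d_2\ll c$ gives $d_2\le c$, whence $d_1+d_2\le d_1+c=c+d_1\le x$ (and symmetrically $d_1+d_2\le c+d_2\le x$). In particular $2d_1\le c+d_1\le x$ and $2d_2\le x$, and since $d_1'+d_2'\le d_1+d_2\le\infty c$, any element $z$ with $d_1',d_2'\le\infty z$ automatically has $d_1'+d_2'\le\infty z$. Thus it suffices to produce a single $z\in S$ with $2z\le x$ and $d_1',d_2'\le\infty z$, for then $y:=z$ and this $z$ verify property~(V) (the required bounds are $y+z=2z\le x$ and $d_1'+d_2'\le\infty y,\infty z$).

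Next I would use algebraicity of $S\otimes\{0,\infty\}$ to replace the (possibly non-compact) ideal generated by $d_1'+d_2'$ by a compact one. Applying \autoref{prp:CharLatAlgbraic} to $d_1'+d_2'\ll d_1+d_2$ (using \axiomO{3}) yields $w',w\in S$ with $d_1'+d_2'\ll w'\ll w\ll d_1+d_2$ and $w\le\infty w'$; then $K:=\langle w\rangle=\langle w'\rangle$ is a \emph{compact} ideal (since $\langle w\rangle\le\langle w'\rangle\ll\langle w\rangle$ under the \CuMor{} $s\mapsto\langle s\rangle$) with $\langle d_1'+d_2'\rangle\subseteq K\subseteq\langle d_1+d_2\rangle$. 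As $d_1'+d_2'\le w$, it is now enough to find $z$ with $2z\le x$ and $w\le\infty z$, i.e.\ $K\subseteq\langle z\rangle$, which then forces $d_1'+d_2'\le\infty z$. The point of passing to $K$ is that its support is a compact-open piece of the ideal lattice, which can be separated from its complement.

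The technical heart is the construction of this $z$, and this is where almost unperforation is essential. The pointwise picture is clear: over each primitive quotient at least one of $d_1',d_2'$ is nonzero, and there $x$ dominates $2d_1'$ or $2d_2'$, so the ``pointwise maximum'' $d_1'\vee d_2'$ would do; the difficulty is that this maximum need not exist as a genuine supremum, so the two candidate summands must be glued while keeping the size bound $2z\le x$. I would first reduce to the countably based case by the L\"owenheim--Skolem technique already used in \autoref{prp:DivExtension}, checking that \axiomO{5}-\axiomO{8}, almost unperforation, and (via \autoref{prp:CharLatAlgbraic}) algebraicity of $S\otimes\{0,\infty\}$ all satisfy the L\"owenheim--Skolem condition, so that the compact ideal $K$ is controlled by countably many basic compact ideals. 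I would then argue by a case distinction on the relative position of $\langle d_1'\rangle$ and $\langle d_2'\rangle$: where one of them dominates the other, that dominating element already generates the local ideal and the bound $2d_i'\le x$ supplies the estimate, while on the complementary compact-open piece one cuts $x$ using \axiomO{5}-\axiomO{8} and certifies by almost unperforation (strict comparison) that the glued element $z$ still satisfies $2z\le x$. Assembling the pieces yields $z$ with $2z\le x$ and $K\subseteq\langle z\rangle$.

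The main obstacle will be exactly this last gluing: realizing the ideal-supremum $\langle d_1'\rangle\vee\langle d_2'\rangle$ by a single element of size at most half of $x$, i.e.\ substituting for the (nonexistent) supremum $d_1'\vee d_2'$. Almost unperforation is what converts the pointwise estimate $x\ge 2\max(d_1',d_2')$ into the global inequality $2z\le x$, and algebraicity of $S\otimes\{0,\infty\}$ is what makes the supports of $d_1'$ and $d_2'$ separable enough to glue; I expect that neither property alone suffices, and that the delicate point is keeping the two requirements (full support versus controlled size) compatible across the compact-open decomposition.
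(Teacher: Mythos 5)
Your opening reduction is where the argument goes wrong: you reduce property~(V) to producing a \emph{single} element $z$ with $2z\le x$ and $d_1'+d_2'\le\infty z$. That is a strictly stronger statement than property~(V) -- it is essentially a local form of $(2,\omega)$-divisibility of $x$ -- and the whole point of allowing two elements $y,z$ in \autoref{dfn:PrpV} is that one cannot in general glue $d_1$ and $d_2$ into one element of ``half the size'' of $x$. The hypotheses only give $2d_1\le x$ and $2d_2\le x$ separately (plus $c+d_i\le x$); they do not give $2(d_1\vee d_2)\le x$ in any usable sense, and the paper's own proof does \emph{not} produce such a $z$: it produces two genuinely different elements, a ``large'' $y$ with $c''\le 2y$ (obtained from \autoref{prp:DivO5}) and $z=g'+f$, where $f$ is the complement of $c''$ in $x$ coming from \axiomO{5} and $g'$ is a small piece extracted by \axiomO{8}; neither of these satisfies $2(\cdot)\le x$ in general. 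So you have replaced the target by a harder one with no evidence it is reachable.

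The second problem is that the step you yourself identify as the technical heart -- gluing candidates over a compact-open decomposition of the ideal lattice while preserving the bound $2z\le x$ -- is not carried out, and there is no mechanism in a general \CuSgp{} for ``restricting'' an element to a compact-open piece of $\Lat_f(S)$ and reassembling; the infimum construction $s\wedge w$ is only available for $w=2w$ in the countably based case, and even then it does not yield the size control you need. The actual proof uses algebraicity quite differently: \autoref{prp:CharLatAlgbraic} is applied to $c''\ll c'$ (not to $d_1'+d_2'$) to get $c'\le kc''$ for some $k$; then \axiomO{5} gives $c''+f\le x\le c'+f$, a telescoping estimate gives $2(k+1)(d_1+d_2)\le kc''+\infty f$, \axiomO{8} (via \cite[Proposition~7.8]{ThiVil21arX:NowhereScattered}) extracts $g$ with $(k+1)(2g)\ll kc''$ and $d_1'+d_2'\ll g+\infty f$, and almost unperforation is used precisely once, to cancel the factor $k+1$ and conclude $2g\ll c''$. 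Your proposal never identifies this cancellation role for almost unperforation, and without the $kc''+\infty f$ decomposition there is nothing for it to act on. As it stands the argument has a genuine gap at both the reduction and the construction.
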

\begin{proof}
To verify property (V), let $c,d_j',d_j,x\in S$ satisfy $d_j'\ll d_j$ for $j=1,2$, and 
\[
d_1,d_2 \ll c, \andSep
c+d_1,c+d_2\ll x.
\] 

Choose $c'',c'\in S$ such that
\[
d_1,d_2 \ll c'' \ll c' \ll c.
\]

Using that $S\otimes \{ 0,\infty \}$ is algebraic, we may apply \autoref{prp:CharLatAlgbraic} to ensure that $c'\ll \infty c''$.
Let $k\in\NN$ be such that $c'\leq kc''$.
Applying \axiomO{5} for $c'' \ll c'\leq x$, we find~$f$ such that 
\[
c''+f \leq x \leq c'+f.
\]

Then
\[
d_1+c', d_2+c' \leq x \leq c'+f.
\]

It follows that
\[
(d_1+d_2)+c'
\leq d_1+c'+f
\leq c'+2f,
\]
and then
\begin{align*}
2(k+1)(d_1+d_2)
&\leq (2k+2)(d_1+d_2)+c' \\
&\leq (2k+1)(d_1+d_2)+c'+2f \\
&\leq (2k)(d_1+d_2)+c'+4f \\
&\leq \ldots 
\leq c'+4(k+1)f
\leq kc''+\infty f.
\end{align*}

Thus,
\[
2(k+1) (d_1+d_2) 
\leq kc''+\infty f.
\]

Since $S$ satisfies \axiomO{8}, \cite[Proposition~7.8]{ThiVil21arX:NowhereScattered} allows us to find $g\in S$ satisfying
\[
(k+1)(2g) \ll kc'', \andSep 
d_1'+d_2' \ll g+\infty f.
\]

Using that $S$ is almost unperforated, we obtain $2g\ll c''$. 
Let $g'\in S$ satisfy $g'\ll g$ and $d_1'+d_2' \ll g'+\infty f$. 
Applying \autoref{prp:DivO5}, we find $y\in S$ with 
\[
g'+y \leq c'' \leq 2y, \andSep
g'\ll y.
\]

Set $z:=g'+f$.
Then
\[
y+z = y+g'+f \leq c''+f \leq x.
\]

Further,
\[
d_1'+d_2' \leq 2c'' \leq 4y, \andSep
d_1'+d_2' \leq g'+\infty f \leq \infty(g'+f)=\infty z. \qedhere
\]
\end{proof}

A \ca{} $A$ is said to have \emph{strict comparison of positive elements} if for all $a,b \in (A\otimes\KK)_+$, we have $a \precsim b$ whenever $a$ belongs to the closed ideal generated by~$b$ and $d_\tau(a) < d_\tau(b)$ for every lower semicontinuous, $[0,\infty]$-valued $2$-quasitraces~$\tau$ on~$(A\otimes\KK)_+$ with $d_\tau(b)=1$.
It follows from \cite[Proposition~6.2]{EllRobSan11Cone} that a \ca{} $A$ has strict comparison of positive elements if and only if $\Cu(A)$ is almost unperforated.
(For simple \ca{s}, this was shown in \cite[Corollary~4.6]{Ror04StableRealRankZ}.)

\begin{thm}
\label{prp:V-AlmUnp}
Let $A$ be a separable \ca{} with strict comparison of positive elements and with topological dimension zero.
Then~$\Cu(A)$ has property~(V).
\end{thm}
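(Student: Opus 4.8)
The plan is to deduce this directly from \autoref{prp:V-AlmUnp-pre} by checking, one by one, that $S=\Cu(A)$ satisfies each of that lemma's hypotheses. This is the natural strategy: the genuine semigroup-theoretic work—the intricate computation combining \axiomO{5}, \axiomO{8}, \cite[Proposition~7.8]{ThiVil21arX:NowhereScattered}, almost unperforation, and \autoref{prp:DivO5}—has already been carried out in \autoref{prp:V-AlmUnp-pre}, so here the task reduces to a translation between $C^*$-algebraic hypotheses and the corresponding Cuntz-semigroup properties.

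First I would record that the Cuntz semigroup of every \ca{} satisfies \axiomO{5}-\axiomO{8}; this is already cited in the preliminaries (\axiomO{5}-\axiomO{7} from \cite{AntPerThi18TensorProdCu, Rob13Cone, AntPerRobThi21Edwards}, and \axiomO{8} from \cite[Definition~7.2]{ThiVil21arX:NowhereScattered}). Next I would invoke the equivalence, recorded just before the statement, that $A$ has strict comparison of positive elements if and only if $\Cu(A)$ is almost unperforated, which follows from \cite[Proposition~6.2]{EllRobSan11Cone}; this handles the almost-unperforation hypothesis. Finally, since $A$ is separable and has topological dimension zero, \autoref{prp:CharTopDimZero} (specifically the implication $(1)\Rightarrow(3)$) gives that $\Cu(A)\otimes\{0,\infty\}$ is algebraic. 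With all three hypotheses of \autoref{prp:V-AlmUnp-pre} in place, that lemma yields at once that $\Cu(A)$ has property~(V).

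I do not expect any real obstacle: the argument is purely a matter of assembling the correct prior results, and each hypothesis of \autoref{prp:V-AlmUnp-pre} corresponds to exactly one of the three stated assumptions (or to a universal fact about Cuntz semigroups of \ca{s}). The only point requiring care is making sure the separability assumption is used precisely where it is needed, namely to apply \autoref{prp:CharTopDimZero}, since that is the sole place where separability enters.
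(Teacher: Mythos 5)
Your proposal is correct and follows exactly the same route as the paper's proof: verify that $\Cu(A)$ satisfies \axiomO{5}--\axiomO{8}, deduce almost unperforation from strict comparison via \cite[Proposition~6.2]{EllRobSan11Cone}, obtain algebraicity of $\Cu(A)\otimes\{0,\infty\}$ from \autoref{prp:CharTopDimZero}, and then apply \autoref{prp:V-AlmUnp-pre}. Nothing is missing.
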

\begin{proof}
The Cuntz semigroup of every \ca{} satisfies \axiomO{5}-\axiomO{8}. 
By \cite[Proposition~6.2]{EllRobSan11Cone}, $\Cu(A)$ is almost unperforated.
Further, it follows from \autoref{prp:CharTopDimZero} that $\Cu(A)\otimes\{0,\infty\}$ is algebraic.
Thus, we may apply \autoref{prp:V-AlmUnp-pre} to deduce that $\Cu(A)$ has property~(V).
\end{proof}

\section{Divisibility vs. weak divisibility}
\label{sec:Div}

Let $S$ be a weakly $(2,\omega)$-divisible \CuSgp{}.
In this section we investigate conditions ensuring that $S$ is automatically $(2,\omega)$-divisible.
By definition of weak $(2,\omega)$-divisibility, for every $x'\ll x$ in $S$ there are elements $y_1,\ldots,y_n$ such that
\[
2y_1,\ldots,2y_n\leq x, \andSep x'\ll \infty(y_1+\ldots+y_n).
\]

If this always holds with $n=1$, then $S$ is $(2,\omega)$-divisible.
We show that we can always accomplish the above with $n=2$ if $S$ is ideal-filtered and satisfies \axiomO{5}-\axiomO{8};
see \autoref{prp:WkDivDownTo2}.
Using this, we show that nowhere scattered \ca{s} with ideal-filtered Cuntz semigroup `almost' have the Global Glimm Property;
see \autoref{prp:FI-PartialGGP}.

If, additionally, $S$ has property~(V), then we can further reduce $n$ to $1$, thus establishing the main technical result of the paper, \autoref{prp:MainThmCu}.

\begin{lma}
\label{prp:WkDivFirstStep}
Let $S$ be an ideal-filtered \CuSgp{} satisfying \axiomO{5}, and let $x\in S$ be weakly $(2,\omega)$-divisible.
Then for every $x'\in S$ satisfying $x'\ll x$, there exist $c,d_1,\ldots,d_n\in S$ such that
\[
c+d_1, \ldots, c+d_n \leq x, \quad
x'\ll\infty c, \andSep
x'\ll d_1+\ldots+d_n.
\]
\end{lma}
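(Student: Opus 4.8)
The plan is to start from a decomposition supplied by weak divisibility and then manufacture a single ``common'' element $c$ by combining \autoref{prp:DivO5} with ideal-filteredness. First I would fix $x''$ with $x'\ll x''\ll x$ and apply weak $(2,\omega)$-divisibility of $x$ to $x''\ll x$, obtaining $z_1,\ldots,z_n$ with $2z_j\leq x$ for each $j$ and $x''\leq z_1+\ldots+z_n$. Using \axiomO{2} and \axiomO{4}, I then choose $z_j'\ll z_j$ so that the inequality $x''\leq z_1'+\ldots+z_n'$ is retained; these $z_j'$ will serve as the desired $d_j$, since they already satisfy $x'\ll x''\leq d_1+\ldots+d_n$.

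The key step is to replace each halvable $z_j$ by a \emph{uniformly full} element. Applying \autoref{prp:DivO5} with $k=2$ to $z_j'\ll z_j$ and $2z_j\leq x$ produces $y_j\in S$ with
\[
z_j'+y_j\leq x\leq 2y_j, \andSep z_j'\ll y_j.
\]
The crucial gain is the right-hand inequality $x\leq 2y_j\leq\infty y_j$: although the original $z_j$ may generate wildly different (even orthogonal) ideals, every $y_j$ now generates an ideal containing all of $x$. In particular $x''\ll\infty y_j$ for every $j$, so all the $y_j$ lie above the ideal generated by $x''$, which is precisely the configuration in which \autoref{dfn:IdealFiltered} lets us extract a common lower piece.

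Next I would iterate ideal-filteredness to produce a single $c$ with $c\ll y_j$ for all $j$ and $x'\ll\infty c$. Concretely, I fix a chain $x'\ll w_{n-1}\ll\cdots\ll w_1\ll w_0=x''$ and note that $w_i\leq x''\ll\infty y_j$, hence $w_i\ll\infty y_j$, for all $i$ and $j$. I then merge the $y_j$ two at a time: at each stage I feed one term of the chain into \autoref{dfn:IdealFiltered}, replacing the current common bound by a new element that is way-below one further $y_j$ while preserving the membership $w_i\ll\infty(\,\cdot\,)$. Because $\ll$ implies $\leq$ and composes against $\leq$ (cf.\ \autoref{prp:FiltRelTauto}), the element produced stays way-below all previously used $y_j$. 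After $n-1$ steps this yields $c$ with $c\ll y_1,\ldots,y_n$ and $x'\ll w_{n-1}\leq\infty c$.

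Finally, setting $d_j:=z_j'$, I would verify the three conclusions. From $c\ll y_j$ we have $c\leq y_j$, so $c+d_j=c+z_j'\leq z_j'+y_j\leq x$; the membership $x'\ll\infty c$ is built into the construction of $c$; and $x'\ll x''\leq z_1'+\ldots+z_n'=d_1+\ldots+d_n$. I expect the conceptual heart of the argument to be the observation that \autoref{prp:DivO5} upgrades each halvable $z_j$ to a $y_j$ with $x\leq 2y_j$, forcing all the $y_j$ into a single ideal so that a common $c$ exists at all; the main technical nuisance will be the bookkeeping in the iteration of \autoref{dfn:IdealFiltered}, namely keeping $x'$ inside the generated ideal while descending below all $y_j$ simultaneously.
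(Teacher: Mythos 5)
Your proof is correct and follows essentially the same route as the paper's: apply weak $(2,\omega)$-divisibility, use \autoref{prp:DivO5} to replace each halvable summand $z_j$ by an element $y_j$ with $z_j'+y_j\leq x\leq 2y_j$ (so that all the $y_j$ generate an ideal containing $x$), and then apply (iterated) ideal-filteredness to extract a common $c\ll y_1,\ldots,y_n$ with $x'\ll\infty c$, setting $d_j:=z_j'$. The only blemish is the claim that one can choose $z_j'\ll z_j$ retaining $x''\leq z_1'+\ldots+z_n'$ --- in general only elements way below $z_1+\ldots+z_n$ (such as $x'$, since $x'\ll x''\leq z_1+\ldots+z_n$) can be pushed under $z_1'+\ldots+z_n'$, but as the lemma only requires $x'\ll d_1+\ldots+d_n$ this is harmless.
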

\begin{proof}
Using that $x$ is weakly $(2,\omega)$-divisible, we obtain $y_1,\ldots,y_n\in S$ such that
\[
2y_1,\ldots,2y_n\ll x, \andSep
x'\ll y_1+\ldots+y_n.
\]

Choose $y_j'\ll y_j$ in $S$ satisfying $x'\ll y_1'+\ldots+y_n'$. Using \autoref{prp:DivO5} for each $j\leq n$, we find $c_j\in S$ such that
\[
y_j'+c_j \leq x \leq 2c_j, \andSep y_j' \ll c_j.
\]

Since $S$ is ideal-filtered, we obtain $c\in S$ such that
\[
x' \ll \infty c, \andSep
c\ll c_1,\ldots,c_n.
\]

Then $c$ and $d_j:=y_j'$ have the claimed properties.
\end{proof}

\begin{prp}
\label{prp:WkDivDownTo2}
Let $S$ be an ideal-filtered \CuSgp{} satisfying \axiomO{5}-\axiomO{8}, and let $x\in S$ be weakly $(2,\omega)$-divisible.
Then for every $x'\in S$ satisfying $x'\ll x$, and every $m\in\NN$, there exist $c,d_1,d_2\in S$ such that
\[
d_1,d_2 \ll c, \quad
c+d_1,c+md_2 \ll x, \quad
x'\ll\infty c, \andSep
x'\ll \infty(d_1+d_2).
\]

In particular, we have $2d_1,md_2\leq x$ and $x' \ll \infty(d_1+d_2)$.
\end{prp}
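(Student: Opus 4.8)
The plan is to first upgrade weak $(2,\omega)$-divisibility to weak $(k,\omega)$-divisibility for a $k$ large compared with $m$ (legitimate by \autoref{pgr:2OmegaDef}) and then run an enhanced version of the argument in \autoref{prp:WkDivFirstStep}. Given $x'\ll x$, I would pick $x'\ll x_1\ll x$ and use weak $(k,\omega)$-divisibility to get $y_1,\dots,y_n$ with $ky_j\ll x$ and $x_1\ll y_1+\dots+y_n$. Choosing $y_j'\ll y_j$ with $x_1\ll\sum_j y_j'$ and applying \autoref{prp:DivO5} to each relation $ky_j\le x$ yields $c_j$ with $(k-1)y_j'+c_j\le x\le kc_j$ and $y_j'\ll c_j$. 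Since $x\le kc_j\le\infty c_j$, every $c_j$ generates an ideal containing $x_1$, so the set $\{s:x_1\le\infty s\}$ is filtered by \autoref{rmk:IdealFiltered}; iterating \autoref{prp:RephraseIdealFiltered} I would extract a single $c\ll c_1,\dots,c_n$ with $x'\ll\infty c$. As $c\le c_j$, the inequality $(k-1)y_j'+c_j\le x$ gives $c+(k-1)y_j'\le x$, so taking $k-1\ge m$ furnishes the $m$-fold room $c+my_j'\le x$ for every $j$, together with $x'\ll\sum_j y_j'$ and $x'\ll\infty c$.

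Next I expect to normalise the situation so that the covering pieces lie below $c$ while remaining budgeted. Since $x'\ll\infty c$, repeated use of \axiomO{6} followed by \autoref{prp:RefO7} (with common upper bound $c$) produces a single covering element $z\ll c$ with $x'\le\infty z$. Intersecting this cover with the rooms $y_j'$ by means of ideal-filteredness (\autoref{prp:RephraseIdealFiltered}) and the distributivity of the ideal lattice, I anticipate obtaining finitely many $e_1,\dots,e_N$ with $e_i\ll z\ll c$, $c+me_i\le x$, and $x'\ll e_1+\dots+e_N$. This places the problem in the clean form of budgeted pieces sitting below one covering element $z$ and collectively generating $\langle x'\rangle$.

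The heart of the proof, and the step I expect to be the main obstacle, is the reduction from these $N$ budgeted pieces to the \emph{two} pieces $d_1,d_2$ in the statement. This is genuinely delicate: a reduction all the way to a \emph{single} budgeted covering piece would exhibit $x$ as $(2,\omega)$-divisible and thereby solve the Global Glimm Problem, so one cannot merge covering pieces naively---the ideal-join of two of them is generated by their sum, which destroys the budget. Instead I would exploit the binary character of ideal-filteredness, in the form of \autoref{prp:CharIdealFilteredO6O7} (available since $S$ satisfies \axiomO{6} and \axiomO{7}), to split the cover into exactly two ideal-generators, and then install the multiplicity $m$ on the second piece by applying the \axiomO{8}-consequence \cite[Proposition~7.8]{ThiVil21arX:NowhereScattered} to a relation of the shape $m(\,\cdot\,)\ll(\text{room})+w$ with $w=\infty d_1$ idempotent, so that the residual term $w$ is absorbed into the ideal generated by the first piece $d_1$. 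It is precisely here that all of \axiomO{5}--\axiomO{8} together with ideal-filteredness are needed, and where the asymmetry between the $2$-fold piece $d_1$ and the $m$-fold piece $d_2$ originates.

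Finally, the ``in particular'' clause is immediate from the main conclusion: from $d_1\ll c$ and $c+d_1\ll x$ one gets $2d_1\le c+d_1\ll x$, and $md_2\le c+md_2\ll x$ holds trivially, while $x'\ll\infty(d_1+d_2)$ is retained from the construction.
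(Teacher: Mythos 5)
Your first stage is fine and even slightly stronger than the paper's: upgrading to weak $(k,\omega)$-divisibility with $k-1\geq m$, applying \autoref{prp:DivO5} to each $ky_j\leq x$, and using ideal-filteredness to extract a single $c\ll c_1,\ldots,c_n$ with $x'\ll\infty c$ is a legitimate variant of \autoref{prp:WkDivFirstStep} that installs the multiplicity $m$ on every piece from the start (the paper installs it later). The ``in particular'' clause is also handled correctly. The problem is that the step you yourself flag as the heart of the proof --- passing from $N$ budgeted covering pieces to exactly two --- is not actually carried out, and the tools you name do not accomplish it. \autoref{prp:CharIdealFilteredO6O7} produces a \emph{single} element sitting below two given elements and generating a prescribed ideal; it has no mechanism for splitting an additive cover $x'\ll e_1+\cdots+e_N$ into two summands, and ``distributivity of the ideal lattice'' is not established (or usable) here in a way that preserves the additive budget. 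Moreover, your plan to apply the \axiomO{8}-consequence to a relation with $w=\infty d_1$ is circular: $d_1$ is one of the two outputs you are trying to construct.

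What the paper actually does at this point is structurally different. Keeping a single room element $y$ together with its \axiomO{5}-complement $f$ satisfying $y'+f\leq x\leq y+f$, one telescopes the inequalities $y+z_j\leq x\leq y+f$ to obtain $(m+1)(z_1+\cdots+z_n)\leq y+\infty f$. The idempotent $w:=\infty f$ is therefore available \emph{before} either $d_i$ exists, and \cite[Proposition~7.8]{ThiVil21arX:NowhereScattered} applied to $(m+1)x^\natural\ll y+\infty f$ yields a single $g$ with $(m+1)g\ll y$ and $x^{\natural\natural}\ll g+\infty f$. The two pieces then fall out of one application of \axiomO{6} to $x''\ll g'+\infty f$: the summand controlled by $\infty f$ is shrunk via ideal-filteredness to $d_1\ll f,c$, whose budget $c+d_1\leq y'+f\leq x$ comes from $f$ being the complement of $y$; the summand controlled by $g'$ is shrunk to $d_2\ll g',c$, whose budget $c+md_2\leq h+mg'\leq y\leq x$ comes from \autoref{prp:DivO5} applied to $(m+1)g\ll y$. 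So the reduction to two pieces is driven by the decomposition $x\leq y+f$ and by \axiomO{8} applied to the \emph{collective} relation $(m+1)\sum_jz_j\leq y+\infty f$, not by any binary splitting of the cover itself. As written, your proposal has a genuine gap at exactly this point.
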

\begin{proof}
Let $x'\in S$ satisfy $x'\ll x$, and let $m\in\NN$.
We may assume that $m\geq 1$.
Choose $x^{\natural\natural},x^\natural$ such that $x' \ll x^{\natural\natural} \ll x^\natural \ll x$.
Applying \autoref{prp:WkDivFirstStep}, we obtain $y,z_1,\ldots,z_n$ such that
\[
y+z_1, \ldots, y+z_n \leq x, \quad
x^\natural \ll \infty y, \andSep
x^\natural \ll z_1+\ldots+z_n.
\]

Choose $y'\in S$ such that
\[
x^\natural \ll \infty y', \andSep
y' \ll y.
\]

Applying \axiomO{5} for $y'\ll y\leq x$, we obtain $f\in S$ such that
\[
y'+f\leq x\leq y+f.
\]

For each $j$, we have
\[
z_j + y \leq x \leq y+f,
\]
and, as in the proof of \autoref{prp:V-AlmUnp-pre}, we get
\begin{align*}
(m+1)x^\natural
\ll (m+1)(z_1+\ldots+z_n) 
\leq y+\infty f.
\end{align*}

Applying \cite[Proposition~7.8]{ThiVil21arX:NowhereScattered}, we obtain $g\in S$ such that
\[
(m+1)g \ll y, \andSep 
x^{\natural\natural} \ll g+\infty f.
\]

Choose $g'\in S$ such that $g'\ll g$ and $x^{\natural\natural} \ll g'+\infty f$. It follows from \autoref{prp:DivO5} that there exists $h\in S$ with 
\[
mg'+h \leq y \leq (m+1)h, \andSep
g'\ll h
\]
and, consequently, we have
\[
x^{\natural\natural} \ll x^\natural \ll \infty y \leq \infty h, \andSep
x^{\natural\natural} \ll x^\natural \ll \infty y'.
\]

Applying that $S$ is ideal-filtered, we obtain $c\in S$ such that
\[
x^{\natural\natural} \ll \infty c, \andSep
c \ll h,y'.
\]

We have
\[
x' 
\ll x^{\natural\natural}
\ll g'+\infty f.
\]

Applying \axiomO{6}, we obtain $s,t\in S$ such that
\[
x' \ll s+t, \quad
s \leq x^{\natural\natural},g', \andSep
t \leq x^{\natural\natural},\infty f.
\]

Choose $s',t'\in S$ such that
\[
x' \ll s'+t', \quad
s' \ll s, \andSep
t' \ll t.
\]

We have
\[
t' \ll t \leq \infty f, \andSep
t' \ll t \leq x^{\natural\natural} \leq \infty c.
\]

Applying again that $S$ is ideal-filtered, we obtain $d_1$ such that
\[
d_1 \ll f,c, \andSep
t' \ll\infty d_1.
\]

Similarly, since we also have
\[
s' \ll s \leq x^{\natural\natural} \leq \infty c,
\]
we can apply that $S$ is ideal-filtered to obtain $d_2\in S$ such that
\[
d_2 \ll s,c, \andSep
s' \ll \infty d_2.
\]

Let us verify that $c,d_1$ and $d_2$ have the desired properties.
First, we clearly have $d_1,d_2\ll c$ and $x' \ll \infty c$.
Further, we have
\[
c + d_1 
\leq c + f 
\leq y'+f 
\leq x, \andSep
c+md_2
\leq h+ms
\leq h+mg' 
\leq y
\leq x.
\]

Lastly, we have
\[
x' 
\ll s'+t'
\ll \infty d_2 + \infty d_1
= \infty (d_1+d_2). \qedhere
\]
\end{proof}

\begin{thm}
\label{prp:MainThmCu}
A \CuSgp{} satisfying \axiomO{5}-\axiomO{8} is $(2,\omega)$-divisible if and only if it is weakly $(2,\omega)$-divisible, ideal-filtered, and has property~(V).
\end{thm}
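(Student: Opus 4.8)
The plan is to prove the two implications separately, with the forward direction being immediate from earlier results and the backward direction combining \autoref{prp:WkDivDownTo2} with property~(V) and ideal-filteredness. For the forward direction, assume $S$ is $(2,\omega)$-divisible. Then $S$ is weakly $(2,\omega)$-divisible, since one may always take the single summand $n=1$ in the definition (see \autoref{pgr:2OmegaDef}). As $S$ satisfies \axiomO{6} and \axiomO{7}, it is ideal-filtered by \autoref{prp:DivImplIdealFiltered}, and it has property~(V) by \autoref{prp:DivImplV}. Thus the forward direction requires no new work.

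For the backward direction, I would assume $S$ is weakly $(2,\omega)$-divisible, ideal-filtered, and has property~(V), and show that an arbitrary $x\in S$ is $(2,\omega)$-divisible. So I fix $x'\ll x$ and aim to produce $w$ with $2w\leq x$ and $x'\leq\infty w$. First, using \axiomO{2}, I interpolate $x'\ll x_1\ll x_2\ll x$. Applying \autoref{prp:WkDivDownTo2} to $x_2\ll x$ with $m=1$ yields $c,d_1,d_2\in S$ satisfying $d_1,d_2\ll c$, $c+d_1,c+d_2\ll x$, and $x_2\ll\infty(d_1+d_2)$; these are precisely the data needed to invoke property~(V).

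Next, since $x_2\ll\infty(d_1+d_2)=\sup_k k(d_1+d_2)$ and each $d_j$ is the supremum of a $\ll$-increasing sequence, I can extract $d_1'\ll d_1$ and $d_2'\ll d_2$ with $x_2\leq\infty(d_1'+d_2')$. Applying property~(V) (\autoref{dfn:PrpV}) to $c,d_1',d_1,d_2',d_2,x$ then produces $y,z\in S$ with $y+z\leq x$ and $d_1'+d_2'\leq\infty y,\infty z$. Combining with the extraction gives $x_2\leq\infty y$ and $x_2\leq\infty z$, whence $x_1\ll\infty y$ and $x_1\ll\infty z$. Finally, I merge $y$ and $z$ into a single element using ideal-filteredness: applying \autoref{dfn:IdealFiltered} to $x'\ll x_1\ll\infty y,\infty z$ gives $w\in S$ with $x'\ll\infty w$ and $w\ll y,z$. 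Then $w\leq y$ and $w\leq z$ yield $2w\leq y+z\leq x$, while $x'\leq\infty w$ holds by construction, so $x$ is $(2,\omega)$-divisible.

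The genuinely hard content is packaged into the cited results: \autoref{prp:WkDivDownTo2} reduces the number of summands from an arbitrary $n$ down to $2$, and property~(V) together with ideal-filteredness reduces $2$ to $1$. In the proof of the theorem itself the only delicate point is bookkeeping the approximants $x_1,x_2$ and $d_1',d_2'$ so that every appeal to the way-below relation is legitimate; in particular, the final application of ideal-filteredness genuinely requires $x_1\ll\infty y,\infty z$ (a way-below relation, not merely $\leq$), which is why the interpolation $x'\ll x_1\ll x_2\ll x$ at the outset is essential.
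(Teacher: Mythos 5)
Your proposal is correct and follows essentially the same route as the paper: apply \autoref{prp:WkDivDownTo2} with $m=1$, feed the resulting $c,d_1',d_1,d_2',d_2$ into property~(V), and then merge the two elements $y,z$ into one via ideal-filteredness. The only cosmetic difference is that you interpolate twice ($x'\ll x_1\ll x_2\ll x$) where the paper uses a single interpolant $x''$, which suffices because $x''\ll\infty(d_1'+d_2')\leq\infty y,\infty z$ already gives the way-below relations needed for the final appeal to \autoref{dfn:IdealFiltered}.
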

\begin{proof}
The forward implication follows from Propositions~\ref{prp:DivImplIdealFiltered} and~\ref{prp:DivImplV}.

To show the converse, let $S$ be a weakly $(2,\omega)$-divisible, ideal-filtered \CuSgp{} with property~(V), and let $x'\ll x$ in $S$. 
Take $x''$ with $x'\ll x''\ll x$. 
Applying \autoref{prp:WkDivDownTo2} for $m=1$, we obtain $c, d_1, d_2$ such that 
\[
d_1,d_2 \ll c, \quad
c+d_1,c+d_2 \ll x, \quad 
x'' \ll \infty c, \andSep 
x'' \ll \infty (d_1 + d_2).
\]

Choose $d_1',d_2'\in S$ such that
\[
x'' \ll \infty (d_1' + d_2'), \quad
d_1' \ll d_1, \andSep
d_2' \ll d_2.
\]

Applying property~(V), we get elements $z,y$ such that 
\[
y+z \leq x, \andSep
d_1' + d_2' \leq \infty y,\infty z.
\]
 
It follows that
\[
x' \ll x'' \ll \infty (d_1' + d_2') \leq \infty y,\infty z.
\]

Using that $S$ is ideal-filtered, we find $u$ such that 
\[
u \ll y,z, \andSep
x' \ll \infty u.
\]

Thus, one has 
\[
2u \leq y+z \leq x, \andSep 
x'\ll \infty u,
\]
as desired.
\end{proof}

\begin{cor}
\label{prp:CharDivRSF}
A residually stably finite \CuSgp{} satisfying \axiomO{5}-\axiomO{8} is $(2,\omega)$-divisible if and only if it is weakly $(2,\omega)$-divisible and ideal-filtered.
\end{cor}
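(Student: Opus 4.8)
The plan is to deduce this directly from the main characterization in \autoref{prp:MainThmCu}, the point being that the hypothesis of property~(V) becomes redundant once one assumes residual stable finiteness. First I would record that since $S$ satisfies \axiomO{5}-\axiomO{8}, in particular it satisfies \axiomO{5}; combined with the standing assumption that $S$ is residually stably finite, \autoref{prp:V-RSF-pre} guarantees that $S$ automatically has property~(V). This single observation is what separates the corollary from the theorem.

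With this in hand, both implications follow from \autoref{prp:MainThmCu}. For the forward direction, if $S$ is $(2,\omega)$-divisible, then it is trivially weakly $(2,\omega)$-divisible, and \autoref{prp:DivImplIdealFiltered} (whose hypotheses \axiomO{6} and \axiomO{7} are both available here) shows that $S$ is ideal-filtered. For the converse, suppose $S$ is weakly $(2,\omega)$-divisible and ideal-filtered; adjoining the automatically-present property~(V), all three conditions appearing in \autoref{prp:MainThmCu} are satisfied, so $S$ is $(2,\omega)$-divisible.

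There is essentially no obstacle here: the genuine content lives in \autoref{prp:MainThmCu} and in \autoref{prp:V-RSF-pre}, and the corollary is simply their combination. The only point requiring care is bookkeeping on the axioms — confirming that \axiomO{5}, needed to invoke property~(V) via \autoref{prp:V-RSF-pre}, together with the full range \axiomO{5}-\axiomO{8} needed for the main theorem, are all subsumed by the standing hypothesis \axiomO{5}-\axiomO{8}. Once that is checked, the proof reduces to two sentences.
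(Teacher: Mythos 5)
Your proof is correct and follows exactly the paper's own route: invoke \autoref{prp:V-RSF-pre} to get property~(V) for free from residual stable finiteness and \axiomO{5}, then apply \autoref{prp:MainThmCu} in both directions. The extra citation of \autoref{prp:DivImplIdealFiltered} for the forward implication is fine but already subsumed by the forward direction of \autoref{prp:MainThmCu}.
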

\begin{proof}
Let $S$ be a residually stably finite \CuSgp{} satisfying \axiomO{5}-\axiomO{8}.
By \autoref{prp:V-RSF-pre}, $S$ automatically satisfies~(V).
Therefore, the result follows from \autoref{prp:MainThmCu}.
\end{proof}

\section{The Global Glimm Problem}
\label{sec:GlimmProblem}

In this section, we prove the main result of the paper:
A \ca{} has the Global Glimm Property if and only if it is nowhere scattered and its Cuntz semigroup is ideal-filtered and has property~(V);
see \autoref{prp:CharGGP}.
This turns the Global Glimm Problem into the question of  whether ideal-filteredness and property~(V) automatically hold for Cuntz semigroups of nowhere scattered \ca{s};
see \autoref{pbm:AutomaticIFV}.

In Sections~\ref{sec:IdealFiltered} and \ref{sec:V} we have shown that ideal-filteredness and property~(V) automatically hold for certain classes of \ca{s} (independent of whether the \ca{s} are nowhere scattered).
Using this, we recover the known solutions of the Global Glimm Problem for \ca{s} of stable rank one (\autoref{prp:GGP-SR1}) and real rank zero (\autoref{prp:GGP-RR0}).
We also obtain a solution for separable \ca{s} with topological dimension zero that are also residually stably finite or have strict comparison of positive elements;
see \autoref{prp:GGP-TopDimZero}.

Further, we show that every \ca{} contains a (unique) largest ideal that has the Global Glimm Property;
see \autoref{prp:largestGlimmIdeal}.

\begin{thm}
\label{prp:CharGGP}
Let $A$ be a \ca{}. 
Then the following are equivalent:
\begin{enumerate}
\item 
$A$ has the Global Glimm Property;
\item
$\Cu(A)$ is $(2,\omega)$-divisible;
\item
$\Cu(A)$ is weakly $(2,\omega)$-divisible, ideal-filtered and has property~(V);
\item
$A$ is nowhere scattered and $\Cu(A)$ is ideal-filtered and has property~(V).
\end{enumerate}
\end{thm}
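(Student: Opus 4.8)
The plan is to assemble this characterization from results already established in the excerpt, treating it as a bookkeeping exercise that combines the Cuntz-semigroup reformulation with the main technical theorem. The key observation is that every Cuntz semigroup $\Cu(A)$ of a \ca{} automatically satisfies \axiomO{5}-\axiomO{8}, so \autoref{prp:MainThmCu} applies directly to $S=\Cu(A)$ without any extra hypotheses.

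First I would recall the translation dictionary between the algebra and its Cuntz semigroup: by \autoref{prp:GlimmFirstChar}, statement~(1) is equivalent to $\Cu(A)$ being $(2,\omega)$-divisible, which is exactly statement~(2); this settles $(1)\Leftrightarrow(2)$. Next, since $\Cu(A)$ satisfies \axiomO{5}-\axiomO{8}, \autoref{prp:MainThmCu} gives that $\Cu(A)$ is $(2,\omega)$-divisible if and only if it is weakly $(2,\omega)$-divisible, ideal-filtered, and has property~(V); this is precisely $(2)\Leftrightarrow(3)$. Finally, to pass from (3) to (4), I would invoke \cite[Theorem~8.9]{ThiVil21arX:NowhereScattered} (quoted in \autoref{pgr:2OmegaDef}), which says that $A$ is nowhere scattered if and only if $\Cu(A)$ is weakly $(2,\omega)$-divisible. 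Substituting this equivalence into (3) turns ``$\Cu(A)$ is weakly $(2,\omega)$-divisible'' into ``$A$ is nowhere scattered'' while leaving the ideal-filteredness and property~(V) clauses untouched, yielding (4). Thus all four statements are equivalent.

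The proof is essentially a chain of citations, so I do not expect any genuine obstacle; the only point requiring care is verifying that the two standing hypotheses invoked—that $\Cu(A)$ satisfies \axiomO{5}-\axiomO{8} and that weak $(2,\omega)$-divisibility is equivalent to nowhere scatteredness—are both legitimately available. The former is recorded in the preliminaries (with \axiomO{5} from \cite[Proposition~4.6]{AntPerThi18TensorProdCu}, \axiomO{6}, \axiomO{7} from \cite{Rob13Cone, AntPerRobThi21Edwards}, and \axiomO{8} from \cite{ThiVil21arX:NowhereScattered}), and the latter is the cited theorem. I would write the proof to make these dependencies explicit so that the reader sees that no new argument beyond \autoref{prp:MainThmCu} and \autoref{prp:GlimmFirstChar} is needed.

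\begin{proof}
The equivalence $(1)\Leftrightarrow(2)$ is \autoref{prp:GlimmFirstChar}.

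Since the Cuntz semigroup of every \ca{} satisfies \axiomO{5}-\axiomO{8}, we may apply \autoref{prp:MainThmCu} to $S=\Cu(A)$ to obtain that $\Cu(A)$ is $(2,\omega)$-divisible if and only if it is weakly $(2,\omega)$-divisible, ideal-filtered, and has property~(V). This shows $(2)\Leftrightarrow(3)$.

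Finally, by \cite[Theorem~8.9]{ThiVil21arX:NowhereScattered} (see \autoref{pgr:2OmegaDef}), the \ca{} $A$ is nowhere scattered if and only if $\Cu(A)$ is weakly $(2,\omega)$-divisible. Substituting this equivalence into~(3) yields the equivalence $(3)\Leftrightarrow(4)$.
\end{proof}
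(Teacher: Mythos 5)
Your proof is correct and follows exactly the same route as the paper's: $(1)\Leftrightarrow(2)$ via \autoref{prp:GlimmFirstChar}, $(2)\Leftrightarrow(3)$ via \autoref{prp:MainThmCu} (using that $\Cu(A)$ satisfies \axiomO{5}-\axiomO{8}), and $(3)\Leftrightarrow(4)$ via the cited characterization of nowhere scatteredness. No issues.
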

\begin{proof}
The equivalence between~(1) and~(2) follows from \autoref{prp:GlimmFirstChar}.
Since the Cuntz semigroup of a \ca{} satisfies \axiomO{5}-\axiomO{8}, the equivalence between~(2) and~(3) follows from \autoref{prp:MainThmCu}.
Finally, by \cite[Theorem~8.9]{ThiVil21arX:NowhereScattered}, a \ca{} is nowhere scattered if and only if its Cuntz semigroup is weakly $(2,\omega)$-divisible, which proves the equivalence between~(3) and~(4).
\end{proof}

\begin{pbm}[Reformulation of the Global Glimm Problem]
\label{pbm:AutomaticIFV}
Let $A$ be a nowhere scattered \ca.
Is $\Cu(A)$ ideal-filtered and has property~(V)?
\end{pbm}

We recover the positive solution to the Global Glimm Problem for \ca{s} of stable rank one from \cite{AntPerRobThi22CuntzSR1}:

\begin{prp}
\label{prp:GGP-SR1}
Let $A$ be a \ca{} of stable rank one.
Then $A$ has the Global Glimm Property if and only if $A$ is nowhere scattered.
\end{prp}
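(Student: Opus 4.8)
The plan is to reduce everything to the characterization already established in \autoref{prp:CharGGP}, and in particular to the equivalence of conditions~(1) and~(4). That equivalence says that $A$ has the Global Glimm Property if and only if $A$ is nowhere scattered \emph{and} $\Cu(A)$ is ideal-filtered and has property~(V). So the strategy is to show that, for a \ca{} of stable rank one, the two structural conditions on $\Cu(A)$ appearing in~(4) hold automatically, with no reference to nowhere scatteredness. Once both are verified, condition~(4) reduces to ``$A$ is nowhere scattered'', and the equivalence with~(1) becomes exactly the desired statement.

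First I would dispose of ideal-filteredness, which is immediate: \autoref{prp:IdealFilteredSR1} establishes that $\Cu(A)$ is ideal-filtered for \emph{every} \ca{} of stable rank one. Next I would handle property~(V) by way of \autoref{prp:V-RSF}, which supplies property~(V) for the Cuntz semigroup of any residually stably finite \ca{}. The one genuine step here is to observe that stable rank one implies residual stable finiteness. This rests on two standard permanence facts about stable rank one: it passes to quotients, so every quotient of $A$ again has stable rank one, and it implies stable finiteness. Consequently every quotient of $A$ is stably finite, that is, $A$ is residually stably finite, and \autoref{prp:V-RSF} then yields that $\Cu(A)$ has property~(V).

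Having both ideal-filteredness and property~(V) in hand, I would conclude by invoking the equivalence of~(1) and~(4) in \autoref{prp:CharGGP}: with the two automatic conditions stripped away, it reads precisely as ``$A$ has the Global Glimm Property if and only if $A$ is nowhere scattered''. The forward implication (Global Glimm Property forces nowhere scatteredness) is in any case general, following from~(1)$\Rightarrow$(2) and the fact that $(2,\omega)$-divisibility entails weak $(2,\omega)$-divisibility.

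The only place where anything beyond direct citation is needed is the implication ``stable rank one $\Rightarrow$ residually stably finite'', so that is where I would expect to spend a sentence of justification; but it is a routine consequence of the quotient-permanence of stable rank one together with stable finiteness, and introduces no new difficulty. All of the substantive work has already been front-loaded into \autoref{prp:IdealFilteredSR1} (via Riesz interpolation for stable rank one) and \autoref{prp:V-RSF} (via residual stable finiteness), so this proposition is essentially an assembly of those two permanence results with \autoref{prp:CharGGP}.
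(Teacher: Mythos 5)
Your proposal is correct and follows exactly the paper's own argument: ideal-filteredness from \autoref{prp:IdealFilteredSR1}, property~(V) from \autoref{prp:V-RSF} via the observation that stable rank one implies residual stable finiteness, and then the equivalence of~(1) and~(4) in \autoref{prp:CharGGP}. The paper states the residual stable finiteness step without elaboration, so your extra sentence of justification is the only (harmless) difference.
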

\begin{proof}
By \autoref{prp:IdealFilteredSR1}, $\Cu(A)$ is ideal-filtered.
Further, every \ca{} of stable rank one is residually stably finite, and therefore $\Cu(A)$ has property~(V) by \autoref{prp:V-RSF}.
Therefore, the result follows from \autoref{prp:CharGGP}.
\end{proof}

We also recover the positive solution to the Global Glimm Problem for \ca{s} of real rank zero from \cite{EllRor06Perturb}:

\begin{prp}
\label{prp:GGP-RR0}
Let $A$ be a \ca{} of real rank zero.
Then $A$ has the Global Glimm Property if and only if $A$ is nowhere scattered.
\end{prp}
\begin{proof}
By Theorems~\ref{prp:IdealFilteredRR0} and~\ref{prp:V-rr0}, $\Cu(A)$ is ideal-filtered and has property~(V).
Therefore, the result follows from \autoref{prp:CharGGP}.
\end{proof}

Additionally, we obtain a positive solution to the Global Glimm Problem for new classes of \ca{s}:

\begin{prp}
\label{prp:GGP-TopDimZero}
Let $A$ be a separable \ca{} with topological dimension zero. 
Assume that $A$ is residually stably finite or has strict comparison of positive elements.
Then $A$ has the Global Glimm Property if and only if $A$ is nowhere scattered.
\end{prp}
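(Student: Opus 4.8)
The plan is to deduce this directly from the characterization in \autoref{prp:CharGGP}, which reduces the Global Glimm Property to nowhere scatteredness together with two structural properties of $\Cu(A)$: ideal-filteredness and property~(V). The forward implication is automatic, since every \ca{} with the Global Glimm Property is nowhere scattered. For the converse, I would assume that $A$ is nowhere scattered and verify that $\Cu(A)$ is ideal-filtered and has property~(V), after which \autoref{prp:CharGGP} applies. This follows the same template as the proofs of \autoref{prp:GGP-SR1} and \autoref{prp:GGP-RR0}.

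First, ideal-filteredness. Since $A$ is separable and has topological dimension zero, \autoref{prp:IdealFilteredTopDimZero} gives directly that $\Cu(A)$ is ideal-filtered. This step uses none of the additional hypotheses and handles both cases uniformly.

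Next, property~(V), where the hypothesis splits into two cases. If $A$ is residually stably finite, then \autoref{prp:V-RSF} immediately yields property~(V) for $\Cu(A)$. If instead $A$ has strict comparison of positive elements, I would invoke \autoref{prp:V-AlmUnp}, which applies precisely because $A$ is separable, has topological dimension zero, and has strict comparison: separability and topological dimension zero ensure (via \autoref{prp:CharTopDimZero}) that $\Cu(A)\otimes\{0,\infty\}$ is algebraic, while strict comparison is equivalent to almost unperforation of $\Cu(A)$.

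Combining the two, in either case $\Cu(A)$ is ideal-filtered and has property~(V); together with nowhere scatteredness, \autoref{prp:CharGGP} then delivers the Global Glimm Property, completing the converse. The proof is thus a routine assembly of the earlier results, and the only genuine content is hidden in the lemmas being cited. The delicate point is the strict comparison case of property~(V), whose proof in \autoref{prp:V-AlmUnp-pre} requires both \axiomO{8} and the algebraicity of $\Cu(A)\otimes\{0,\infty\}$ to extract the halving element through almost unperforation; by contrast, the residually stably finite case rests only on the short argument of \autoref{prp:V-RSF-pre} using \axiomO{5}.
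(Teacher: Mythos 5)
Your proposal is correct and matches the paper's proof exactly: ideal-filteredness from \autoref{prp:IdealFilteredTopDimZero}, property~(V) from \autoref{prp:V-RSF} or \autoref{prp:V-AlmUnp} according to the case, and then \autoref{prp:CharGGP}. No issues.
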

\begin{proof}
By \autoref{prp:IdealFilteredTopDimZero}, $\Cu(A)$ is ideal-filtered.
Using \autoref{prp:V-RSF} if $A$ is residually stably finite, or \autoref{prp:V-AlmUnp} if $A$ has strict comparison of positive elements, we see that $\Cu(A)$ has property~(V).
Hence, the result follows from \autoref{prp:CharGGP}.
\end{proof}

If we only know that $\Cu(A)$ is ideal-filtered (for example, if $A$ is separable and has topological dimension zero), then we obtain the following `almost solution' of the Global Glimm Problem:

\begin{prp}
\label{prp:FI-PartialGGP}
Let $A$ be a nowhere scattered \ca{} such that $\Cu(A)$ is ideal-filtered.
Then for each $a\in A_+$, each natural number $m\geq 2$, and each $\varepsilon>0$, there exist \stHom{s} $\varphi_1\colon C_0((0,1],M_2)\to\overline{aAa}$ and $\varphi_2\colon C_0((0,1],M_m)\to\overline{aAa}$ such that $(a-\varepsilon)_+$ belongs to the ideal of $A$ generated by the combined images of $\varphi_1$ and $\varphi_2$.

In particular, there exist \emph{two} \stHom{s} $C_0((0,1],M_2)\to\overline{aAa}$ such that $(a-\varepsilon)_+$ belongs to the ideal of $A$ generated by their combined images.
\end{prp}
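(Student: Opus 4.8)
The plan is to push everything into the Cuntz semigroup, apply \autoref{prp:WkDivDownTo2}, and then translate the resulting data back into \stHom{s} by the same device that underlies \autoref{prp:GlimmFirstChar}. First I would set $x := [a] \in \Cu(A)$ and fix $x' \in \Cu(A)$ with $[(a-\varepsilon)_+] \ll x' \ll x$; for instance $x' = [(a-\varepsilon/2)_+]$ works, since $(a-\varepsilon)_+ = \big((a-\varepsilon/2)_+ - \varepsilon/2\big)_+$ gives $[(a-\varepsilon)_+] \ll x'$. Because $A$ is nowhere scattered, $\Cu(A)$ is weakly $(2,\omega)$-divisible by \cite[Theorem~8.9]{ThiVil21arX:NowhereScattered}, so in particular $x$ is weakly $(2,\omega)$-divisible; moreover $\Cu(A)$ is ideal-filtered by hypothesis and satisfies \axiomO{5}-\axiomO{8}. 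Applying \autoref{prp:WkDivDownTo2} to $x' \ll x$ and to the given $m$, I obtain $c,d_1,d_2 \in \Cu(A)$ whose ``in particular'' conclusions read
\[
2 d_1 \leq x, \quad m d_2 \leq x, \andSep x' \ll \infty(d_1 + d_2).
\]
In particular $d_1,d_2 \leq [a]$, so both classes lie in the scale $\Sigma_{\overline{aAa}} = \{z \in \Cu(A) : z \leq [a]\}$ of $\overline{aAa}$ (\autoref{prp:ScaleCuA}~(5), recalling that $a$ is strictly positive in $\overline{aAa}$).

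Next I would translate each column size into a \stHom{}. I first choose $d_1' \ll d_1$ and $d_2' \ll d_2$ close enough that still $x' \leq \infty(d_1' + d_2')$; this is possible because $\infty(d_1' + d_2')$ increases to $\infty(d_1 + d_2)$ as $d_j' \uparrow d_j$, while $x' \ll \infty(d_1 + d_2)$. Now I apply the $\CatCu$-to-\stHom{} device of \cite[Theorem~5.3~(i)]{RobRor13Divisibility} (the one powering the implication (4)$\Rightarrow$(1) of \autoref{prp:GlimmFirstChar}), once inside $\overline{aAa}$ to the datum $2 d_1 \leq [a]$ and once to $m d_2 \leq [a]$. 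This yields \stHom{s} $\varphi_1 \colon C_0((0,1],M_2) \to \overline{aAa}$ and $\varphi_2 \colon C_0((0,1],M_m) \to \overline{aAa}$ whose column classes are (essentially) $d_1$ and $d_2$, so that the ideal generated by the image of $\varphi_j$ contains $\langle d_j' \rangle$. Since the ideal generated by the combined images of $\varphi_1$ and $\varphi_2$ corresponds to $\langle d_1' \rangle + \langle d_2' \rangle = \langle d_1' + d_2' \rangle$, and $[(a-\varepsilon)_+] \ll x' \leq \infty(d_1' + d_2')$, the element $(a-\varepsilon)_+$ lies in that ideal, which is exactly the claim. The final assertion is the case $m = 2$.

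The main obstacle I anticipate is the precise reading of the $\CatCu$-to-\stHom{} translation: I must produce a \stHom{} $C_0((0,1],M_k) \to \overline{aAa}$ realizing a \emph{prescribed} column size $d_j$ (equivalently, generating the ideal $\langle d_j \rangle$) purely from the divisibility inequality $k d_j \leq [a]$, \emph{without} assuming that $(a-\varepsilon)_+$ belongs to the individual ideal $\langle d_j \rangle$ --- indeed it need not, as only the join $\langle d_1 + d_2 \rangle$ is guaranteed to contain it. Thus the delicate point is to use \cite[Theorem~5.3~(i)]{RobRor13Divisibility} as a \emph{halving} construction of prescribed size, with fullness of the generated ideal being a separate downstream consequence, and then to perform the splitting of $(a-\varepsilon)_+$ across the two homomorphisms at the level of ideals rather than of elements.
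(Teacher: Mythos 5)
Your proposal is correct and follows essentially the same route as the paper: set $x=[a]$, apply \autoref{prp:WkDivDownTo2} to obtain $d_1,d_2$ with $2d_1,md_2\leq x$ and $x'\ll\infty(d_1+d_2)$, pass to $d_j'\ll d_j$ still witnessing $x'\leq\infty(d_1'+d_2')$, and realize each $d_j$ by a \stHom{} into $\overline{aAa}$. The one point you flag as delicate --- producing $\varphi_j\colon C_0((0,1],M_k)\to\overline{aAa}$ whose column class is squeezed between $d_j'$ and $d_j$ from the inequality $kd_j\leq[a]$ alone, without any fullness assumption on $\langle d_j\rangle$ --- is precisely the content of \cite[Lemma~2.4]{RobRor13Divisibility}, which is what the paper invokes here in place of \cite[Theorem~5.3~(i)]{RobRor13Divisibility}.
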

\begin{proof}
As mentioned in \autoref{sec:Prelims}, $\Cu(A)$ is a \CuSgp{} satisfying \axiomO{5}-\axiomO{8} and, using \cite[Theorem~8.9]{ThiVil21arX:NowhereScattered}, we know that $\Cu(A)$ is weakly $(2,\omega)$-divisible.
Further, by assumption, $\Cu(A)$ is ideal-filtered.

Now let $a\in A_+$ and $\varepsilon >0$. 
Set $x=[a]$ and $x'=[(a-\varepsilon)_+]$. 
Applying \autoref{prp:WkDivDownTo2} to $x'\ll x$, one obtains elements $d_1,d_2\in\Cu(A)$ such that
\[
2d_1, md_2 \leq x, \andSep
x' \ll \infty(d_1+d_2).
\]

Take $d_1',d_2'\in \Cu (A)$ satisfying 
\[
x' \ll \infty(d_1'+d_2'), \quad
d_1' \ll d_1, \andSep
d_2' \ll d_2.
\]

Since $2d_1\leq x$, \cite[Lemma~2.4]{RobRor13Divisibility} implies that there exists a \stHom{} $\varphi_1\colon C_0((0,1],M_2)\to\overline{aAa}$ with $d_1'\ll [\varphi_1 (e_{1,1}\otimes\iota)]\ll d_1$, where $e_{1,1}\otimes\iota$ denotes the map $t\mapsto te_{1,1}$ in $C_0((0,1],M_2)$.
Similarly, using that $md_2 \leq x$ and \cite[Lemma~2.4]{RobRor13Divisibility}, we find another \stHom{} $\varphi_2\colon C_0((0,1],M_m)\to\overline{aAa}$ with $d_2'\ll [\varphi_2 (e_{1,1}\otimes\iota)]\ll d_2$.

Using that $[(a-\varepsilon)_+]=x' \leq \infty(d_1'+d_2')$, it follows that $(a-\varepsilon)_+$ belongs to the ideal generated by the combined images of $\varphi_1$ and $\varphi_2$, as desired.
\end{proof}

The next result shows that every \ca{} contains a (unique) largest ideal that has the Global Glimm Property.

\begin{thm}
\label{prp:largestGlimmIdeal}
Let $A$ be a \ca.
Then there exists an ideal $I_{\mathrm{Glimm}}(A)$ of~$A$ such that $I_{\mathrm{Glimm}}(A)$ has the Global Glimm Property, and such that every ideal $I\subseteq A$ with the Global Glimm Property satisfies $I\subseteq I_{\mathrm{Glimm}}(A)$.
\end{thm}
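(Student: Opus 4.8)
The plan is to construct $I_{\mathrm{Glimm}}(A)$ as the closure of the union (equivalently, the ideal generated by the union) of all ideals of $A$ with the Global Glimm Property, and to verify that this union itself has the Global Glimm Property.
The permanence results established in the previous section are precisely the tools needed: \autoref{prp:GlimmExtension} tells us the property passes to and detects itself on ideals, and \autoref{prp:Limits} tells us it is preserved under inductive limits.
Concretely, let $\mathcal{F}$ denote the collection of all ideals of $A$ having the Global Glimm Property, and set $I_{\mathrm{Glimm}}(A):=\overline{\sum_{I\in\mathcal{F}}I}$, the smallest ideal containing every member of $\mathcal{F}$.
By construction, every $I\in\mathcal{F}$ satisfies $I\subseteq I_{\mathrm{Glimm}}(A)$, so the only substantive point is that $I_{\mathrm{Glimm}}(A)$ itself lies in $\mathcal{F}$.

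\textbf{The main step.}
The heart of the argument is to show that $I_{\mathrm{Glimm}}(A)$ has the Global Glimm Property.
First I would observe that $\mathcal{F}$ is upward directed: given $I_1,I_2\in\mathcal{F}$, the sum $I_1+I_2$ is an extension of $I_1$ by $(I_1+I_2)/I_1\cong I_2/(I_1\cap I_2)$, which is a quotient of $I_2$ and hence has the Global Glimm Property by \autoref{prp:GlimmExtension}; applying the extension statement of \autoref{prp:GlimmExtension} again, $I_1+I_2\in\mathcal{F}$.
Thus $\{\sum_{I\in\mathcal{G}}I : \mathcal{G}\subseteq\mathcal{F}\text{ finite}\}$ is an upward-directed family of ideals in $\mathcal{F}$ whose union is dense in $I_{\mathrm{Glimm}}(A)$, so that $I_{\mathrm{Glimm}}(A)$ is the inductive limit of this net of sub-\ca{s}.
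Applying \autoref{prp:Limits} then yields that $I_{\mathrm{Glimm}}(A)$ has the Global Glimm Property, as desired.

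\textbf{Anticipated obstacle.}
The one technical wrinkle to handle carefully is identifying the norm-closure of a directed union of ideals with an inductive limit in the sense required by \autoref{prp:Limits}.
This is a standard fact—a directed union of sub-\ca{s} (with inclusion connecting maps) has inductive limit equal to the closure of the union—but I would want to state it cleanly, taking the index set to be the directed set of finite subsets $\mathcal{G}\subseteq\mathcal{F}$ ordered by inclusion, with $A_{\mathcal{G}}:=\sum_{I\in\mathcal{G}}I$ and the obvious inclusion maps.
Uniqueness of $I_{\mathrm{Glimm}}(A)$ is immediate: any ideal with the property is contained in $I_{\mathrm{Glimm}}(A)$ by definition of $\mathcal{F}$, and since $I_{\mathrm{Glimm}}(A)$ itself belongs to $\mathcal{F}$, it is the unique largest such ideal.
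Apart from the bookkeeping around directed unions, no genuine difficulty arises, since all the hard analytic content is already packaged in \autoref{prp:GlimmExtension} and \autoref{prp:Limits}.
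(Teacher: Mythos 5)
Your proposal is correct and follows essentially the same route as the paper: both show the family of ideals with the Global Glimm Property is upward directed via the extension result (\autoref{prp:GlimmExtension}), realize the closure of its union as an inductive limit, and conclude with \autoref{prp:Limits}. The only difference is cosmetic bookkeeping (indexing over finite subsets versus over the directed family itself).
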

\begin{proof}
Let $\mathcal{J}$ denote the collection of ideals $I\subseteq A$ such that $I$ has the Global Glimm Property.
To show that $\mathcal{J}$ is upward directed, let $I,J\in\mathcal{J}$.
We consider the ideal $I+J\subseteq A$.
Viewing $I$ as an ideal of $I+J$, it follows that $(I+J)/I$ is isomorphic to a quotient of $J$.
Applying \autoref{prp:GlimmExtension}, we see that $(I+J)/I$ has the Global Glimm Property and, consequently, so does $I+J$.

Set $J:=\overline{\bigcup\mathcal{J}}$.
Then $J$ is an ideal of $A$ that is the inductive limit of the inductive system $\mathcal{J}$ (indexed over itself).
By \autoref{prp:Limits}, $J$ has the Global Glimm Property.
Then $I_{\mathrm{Glimm}}(A):=J$ has the claimed properties.
\end{proof}

\begin{prp}
Let $A$ be a nowhere scattered \ca{} with finite primitive ideal space.
Then $A$ has the Global Glimm Property.
\end{prp}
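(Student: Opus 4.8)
The plan is to argue by induction on the number $n$ of points of the (finite) primitive ideal space $\Prim(A)$, using the permanence of the Global Glimm Property under extensions established in \autoref{prp:GlimmExtension}. Since $\Prim(A)$ is finite it has only finitely many open subsets, so by the classical lattice isomorphism between the open subsets of $\Prim(A)$ and the (closed, two-sided) ideals of $A$, the algebra $A$ has only finitely many ideals. The case $n=0$ is trivial, and $n=1$ means precisely that $A$ is simple; a simple nowhere scattered \ca{} has the Global Glimm Property by \cite[Lemma~4.14]{KirRor02InfNonSimpleCalgAbsOInfty}, which settles the base case.

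For the inductive step I would assume $n\geq 2$, so that $A$ is not simple, and choose a \emph{minimal} nonzero ideal $I\subseteq A$; this exists because $A$ has only finitely many ideals, and $I$ is proper since $A$ is not simple. The key structural observation is that $I$ is simple: for \ca{s}, every closed two-sided ideal $J$ of $I$ is automatically a closed two-sided ideal of $A$ (using an approximate unit of $I$ together with $I\triangleleft A$), so minimality of $I$ forces $J\in\{0,I\}$. Moreover, nowhere scatteredness passes to ideals and quotients: by \cite[Theorem~8.9]{ThiVil21arX:NowhereScattered} it is equivalent to having no nonzero elementary ideal-quotients, a property clearly inherited by ideals and by quotients (an ideal-quotient of an ideal, or of a quotient, of $A$ is again an ideal-quotient of $A$). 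Thus $I$ is a simple nowhere scattered \ca{}, and hence has the Global Glimm Property by the base case.

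It then remains to treat $A/I$. Its primitive ideal space is homeomorphic to the closed subset $\{P\in\Prim(A):P\supseteq I\}$, which is the complement of the nonempty open set associated with $I$; hence $|\Prim(A/I)|<n$. As $A/I$ is again nowhere scattered, the induction hypothesis shows that $A/I$ has the Global Glimm Property. Since both $I$ and $A/I$ have the Global Glimm Property, \autoref{prp:GlimmExtension} yields that $A$ does as well, completing the induction. (Alternatively, one could phrase the whole argument through \autoref{prp:CharGGP}, but the extension permanence makes the bookkeeping cleaner.)

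The main point that is genuinely nonformal is the base case — that a simple nowhere scattered \ca{} has the Global Glimm Property — which ultimately rests on Glimm's halving lemma and must be imported rather than derived from the reductions above; everything else is a bookkeeping induction over the finite ideal lattice. The only secondary items requiring care are the claims that ideals of ideals are ideals and that nowhere scatteredness descends to ideals and quotients, both of which are routine.
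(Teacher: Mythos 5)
Your proof is correct and follows essentially the same route as the paper: induction on $|\Prim(A)|$ combined with the extension permanence from \autoref{prp:GlimmExtension}, with the base case being the simple nowhere scattered algebras. The only differences are cosmetic --- the paper applies the induction hypothesis to an arbitrary proper nonzero ideal \emph{and} its quotient rather than singling out a simple minimal ideal, and it derives the simple base case from Glimm's classical theorem via \autoref{prp:GlimmFirstChar} (every hereditary sub-\ca{} is non-type I, hence contains a copy of $M_2(C_0((0,1]))$ and thus a full square-zero element) instead of citing Kirchberg--R{\o}rdam's Lemma~4.14 directly.
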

\begin{proof}
First, note that a simple, nowhere scattered \ca{} $B$ always satisfies the Global Glimm Property. 
Indeed, $B$ and all of its hereditary sub-\ca{s} are non-type I by \cite[Theorem~3.1]{ThiVil21arX:NowhereScattered}. Thus, it follows from Glimm's classical theorem that each hereditary sub-\ca{} of $B$ contains a sub-\ca{} isomorphic to $M_2 (C_0 ((0,1]))$; see \cite[Proposition~4.10]{KirRor00PureInf}. This implies that every hereditary sub-\ca{} has a full square-zero element. By \autoref{prp:GlimmFirstChar}, $B$ has the Global Glimm Property.

Let us now prove the statement by induction on $\vert \Prim (A)\vert$, the number of points in the primitive ideal space $\Prim (A)$ of the \ca{} $A$.
 
If $\vert \Prim (A)\vert =1$, the result holds by the previous argument. 
Thus, fix $n\in\NN$ and assume that we have proven the result for all nowhere scattered \ca{s}~$B$ with $\vert \Prim (B)\vert\leq n$.

Let $A$ be a nowhere scattered \ca{} such that $\vert \Prim (A)\vert =n+1$, and take a proper, nonzero ideal $J\subseteq A$. 
Then, both $\Prim (A/J)$ and $\Prim (J)$ have strictly fewer points than $\Prim (A)$. 
Using our induction hypothesis, we see that $A/J$ and $J$ satisfy the Global Glimm Property.
Now it follows from \autoref{prp:GlimmExtension} that $A$ has the Global Glimm Property.
\end{proof}

\begin{rmks}
(1)
Following \cite[2.1(iv)]{BroPed09Limits}, we say that a \ca{} has \emph{generalized stable rank one} if it has a composition series $(I_\lambda)_{\lambda\leq\kappa}$ of ideals such that each successive quotient $I_{\lambda+1}/I_\lambda$ has stable rank one.
Generalized real rank zero is defined analogously.

It follows from \autoref{prp:GlimmExtension},  \autoref{prp:Limits} and \autoref{prp:GGP-SR1}, that every nowhere scattered \ca{} of generalized stable rank one has the Global Glimm Property.
Similarly, using \autoref{prp:GGP-RR0} instead of \autoref{prp:GGP-SR1}, we see that every nowhere scattered \ca{} of generalized real rank zero has the Global Glimm Property.

(2)
By \cite[Proposition~4.2]{Ror88AdvUnitaryRank} and \cite[Theorem~2.3]{BroPed07IdealStructure}, every \ca{} $A$ contains a largest ideal $I_{\mathrm{sr1}}(A)$ that has stable rank one, and a largest ideal $I_{\mathrm{rr0}}(A)$ that has real rank zero.
Assuming that $A$ is nowhere scattered, it follows that $I_{\mathrm{sr1}}(A)\subseteq I_{\mathrm{Glimm}}(A)$ and $I_{\mathrm{rr0}}(A)\subseteq I_{\mathrm{Glimm}}(A)$.
More generally, $I_{\mathrm{Glimm}}(A)$ contains the largest ideal of $A$ that has generalized stable rank one (generalized real rank zero).
\end{rmks}



\begin{thebibliography}{APRT22}

\bibitem[APRT21]{AntPerRobThi21Edwards}
\bgroup\scshape{}R.~Antoine\egroup{}, \bgroup\scshape{}F.~Perera\egroup{},
  \bgroup\scshape{}L.~Robert\egroup{}, and \bgroup\scshape{}H.~Thiel\egroup{},
  Edwards' condition for quasitraces on \ca{s},  \emph{Proc. Roy. Soc.
  Edinburgh Sect. A} \textbf{151} (2021), 525--547.

\bibitem[APRT22]{AntPerRobThi22CuntzSR1}
\bgroup\scshape{}R.~Antoine\egroup{}, \bgroup\scshape{}F.~Perera\egroup{},
  \bgroup\scshape{}L.~Robert\egroup{}, and \bgroup\scshape{}H.~Thiel\egroup{},
  \ca{s} of stable rank one and their {C}untz semigroups,  \emph{Duke Math. J.}
  \textbf{171} (2022), 33--99.

\bibitem[APT18]{AntPerThi18TensorProdCu}
\bgroup\scshape{}R.~Antoine\egroup{}, \bgroup\scshape{}F.~Perera\egroup{}, and
  \bgroup\scshape{}H.~Thiel\egroup{}, Tensor products and regularity properties
  of {C}untz semigroups,  \emph{Mem. Amer. Math. Soc.} \textbf{251} (2018),
  viii+191.

\bibitem[APT20a]{AntPerThi20AbsBivariantCu}
\bgroup\scshape{}R.~Antoine\egroup{}, \bgroup\scshape{}F.~Perera\egroup{}, and
  \bgroup\scshape{}H.~Thiel\egroup{}, Abstract bivariant {C}untz semigroups,
  \emph{Int. Math. Res. Not. IMRN} (2020), 5342--5386.

\bibitem[APT20b]{AntPerThi20AbsBivarII}
\bgroup\scshape{}R.~Antoine\egroup{}, \bgroup\scshape{}F.~Perera\egroup{}, and
  \bgroup\scshape{}H.~Thiel\egroup{}, Abstract bivariant {C}untz semigroups
  {II},  \emph{Forum Math.} \textbf{32} (2020), 45--62.

\bibitem[APT20c]{AntPerThi20CuntzUltraproducts}
\bgroup\scshape{}R.~Antoine\egroup{}, \bgroup\scshape{}F.~Perera\egroup{}, and
  \bgroup\scshape{}H.~Thiel\egroup{}, Cuntz semigroups of ultraproduct \ca{s},
  \emph{J. Lond. Math. Soc. (2)} \textbf{102} (2020), 994--1029.

\bibitem[AP96]{AraPar96RefMonWkComparability}
\bgroup\scshape{}P.~Ara\egroup{} and \bgroup\scshape{}E.~Pardo\egroup{},
  Refinement monoids with weak comparability and applications to regular rings
  and \ca{s},  \emph{Proc. Amer. Math. Soc.} \textbf{124} (1996), 715--720.

\bibitem[Bla06]{Bla06OpAlgs}
\bgroup\scshape{}B.~Blackadar\egroup{}, \emph{Operator algebras},
  \emph{Encyclopaedia of Mathematical Sciences} \textbf{122}, Springer-Verlag,
  Berlin, 2006, Theory of \ca{s} and von Neumann algebras, Operator Algebras
  and Non-commutative Geometry, III.

\bibitem[BK04a]{BlaKir04GlimmHalving}
\bgroup\scshape{}E.~Blanchard\egroup{} and
  \bgroup\scshape{}E.~Kirchberg\egroup{}, Global {G}limm halving for
  {$C^*$}-bundles,  \emph{J. Operator Theory} \textbf{52} (2004), 385--420.

\bibitem[BK04b]{BlaKir04PureInf}
\bgroup\scshape{}E.~Blanchard\egroup{} and
  \bgroup\scshape{}E.~Kirchberg\egroup{}, Non-simple purely infinite \ca{s}:
  the {H}ausdorff case,  \emph{J. Funct. Anal.} \textbf{207} (2004), 461--513.

\bibitem[BP07]{BroPed07IdealStructure}
\bgroup\scshape{}L.~G. Brown\egroup{} and \bgroup\scshape{}G.~K.
  Pedersen\egroup{}, Ideal structure and \ca{s} of low rank,  \emph{Math.
  Scand.} \textbf{100} (2007), 5--33.

\bibitem[BP09]{BroPed09Limits}
\bgroup\scshape{}L.~G. Brown\egroup{} and \bgroup\scshape{}G.~K.
  Pedersen\egroup{}, Limits and \ca{s} of low rank or dimension,  \emph{J.
  Operator Theory} \textbf{61} (2009), 381--417.

\bibitem[CR21]{ChaRob21arX:AutoContGrUnit}
\bgroup\scshape{}A.~Chand\egroup{} and \bgroup\scshape{}L.~Robert\egroup{},
  Simplicity, bounded normal generation, and automatic continuity of groups of
  unitaries, preprint (arXiv:2108.09796 [math.OA]), 2021.

\bibitem[CRS10]{CiuRobSan10CuIdealsQuot}
\bgroup\scshape{}A.~Ciuperca\egroup{}, \bgroup\scshape{}L.~Robert\egroup{}, and
  \bgroup\scshape{}L.~Santiago\egroup{}, The {C}untz semigroup of ideals and
  quotients and a generalized {K}asparov stabilization theorem,  \emph{J.
  Operator Theory} \textbf{64} (2010), 155--169.

\bibitem[CEI08]{CowEllIva08CuInv}
\bgroup\scshape{}K.~T. Coward\egroup{}, \bgroup\scshape{}G.~A.
  Elliott\egroup{}, and \bgroup\scshape{}C.~Ivanescu\egroup{}, The {C}untz
  semigroup as an invariant for \ca{s},  \emph{J.\ Reine Angew.\ Math.}
  \textbf{623} (2008), 161--193.

\bibitem[Cun78]{Cun78DimFct}
\bgroup\scshape{}J.~Cuntz\egroup{}, Dimension functions on simple \ca{s},
  \emph{Math. Ann.} \textbf{233} (1978), 145--153.

\bibitem[ERS11]{EllRobSan11Cone}
\bgroup\scshape{}G.~A. Elliott\egroup{}, \bgroup\scshape{}L.~Robert\egroup{},
  and \bgroup\scshape{}L.~Santiago\egroup{}, The cone of lower semicontinuous
  traces on a \ca{},  \emph{Amer. J. Math.} \textbf{133} (2011), 969--1005.

\bibitem[ER06]{EllRor06Perturb}
\bgroup\scshape{}G.~A. Elliott\egroup{} and
  \bgroup\scshape{}M.~R{\o}rdam\egroup{}, Perturbation of {H}ausdorff moment
  sequences, and an application to the theory of \ca{s} of real rank zero,  in
  \emph{Operator {A}lgebras: {T}he {A}bel {S}ymposium 2004}, \emph{Abel Symp.}
  \textbf{1}, Springer, Berlin, 2006, pp.~97--115.

\bibitem[Gli61]{Gli61Type1}
\bgroup\scshape{}J.~Glimm\egroup{}, Type {I} \ca{s},  \emph{Ann. of Math. (2)}
  \textbf{73} (1961), 572--612.

\bibitem[KR00]{KirRor00PureInf}
\bgroup\scshape{}E.~Kirchberg\egroup{} and
  \bgroup\scshape{}M.~R{\o}rdam\egroup{}, Non-simple purely infinite \ca{s},
  \emph{Amer. J. Math.} \textbf{122} (2000), 637--666.

\bibitem[KR02]{KirRor02InfNonSimpleCalgAbsOInfty}
\bgroup\scshape{}E.~Kirchberg\egroup{} and
  \bgroup\scshape{}M.~R{\o}rdam\egroup{}, Infinite non-simple \ca{s}: absorbing
  the {C}untz algebra {$\mathcal{O}_\infty$},  \emph{Adv. Math.} \textbf{167}
  (2002), 195--264.

\bibitem[MDS02]{Mor02RefinementMaxAntisymQuot}
\bgroup\scshape{}C.~Moreira Dos~Santos\egroup{}, A refinement monoid whose
  maximal antisymmetric quotient is not a refinement monoid,  \emph{Semigroup
  Forum} \textbf{65} (2002), 249--263.

\bibitem[OPR11]{OrtPerRor11CoronaRefinement}
\bgroup\scshape{}E.~Ortega\egroup{}, \bgroup\scshape{}F.~Perera\egroup{}, and
  \bgroup\scshape{}M.~R{\o}rdam\egroup{}, The corona factorization property and
  refinement monoids,  \emph{Trans. Amer. Math. Soc.} \textbf{363} (2011),
  4505--4525.

\bibitem[PP17]{PasPhi17WeakIPTopDimZero}
\bgroup\scshape{}C.~Pasnicu\egroup{} and \bgroup\scshape{}N.~C.
  Phillips\egroup{}, The weak ideal property and topological dimension zero,
  \emph{Canad. J. Math.} \textbf{69} (2017), 1385--1421.

\bibitem[PR07]{PasRor07PIRR0}
\bgroup\scshape{}C.~Pasnicu\egroup{} and
  \bgroup\scshape{}M.~R{\o}rdam\egroup{}, Purely infinite \ca{s} of real rank
  zero,  \emph{J. Reine Angew. Math.} \textbf{613} (2007), 51--73.

\bibitem[Rob13a]{Rob13Cone}
\bgroup\scshape{}L.~Robert\egroup{}, The cone of functionals on the {C}untz
  semigroup,  \emph{Math. Scand.} \textbf{113} (2013), 161--186.

\bibitem[Rob13b]{Rob13CuSpDim2}
\bgroup\scshape{}L.~Robert\egroup{}, The {C}untz semigroup of some spaces of
  dimension at most two,  \emph{C. R. Math. Acad. Sci. Soc. R. Can.}
  \textbf{35} (2013), 22--32.

\bibitem[RR13]{RobRor13Divisibility}
\bgroup\scshape{}L.~Robert\egroup{} and \bgroup\scshape{}M.~R{\o}rdam\egroup{},
  Divisibility properties for \ca{s},  \emph{Proc. Lond. Math. Soc. (3)}
  \textbf{106} (2013), 1330--1370.

\bibitem[R{\o}r88]{Ror88AdvUnitaryRank}
\bgroup\scshape{}M.~R{\o}rdam\egroup{}, Advances in the theory of unitary rank
  and regular approximation,  \emph{Ann. of Math. (2)} \textbf{128} (1988),
  153--172.

\bibitem[R{\o}r92]{Ror92StructureUHF2}
\bgroup\scshape{}M.~R{\o}rdam\egroup{}, On the structure of simple \ca{s}
  tensored with a {UHF}-algebra. {II},  \emph{J. Funct. Anal.} \textbf{107}
  (1992), 255--269.

\bibitem[R{\o}r04]{Ror04StableRealRankZ}
\bgroup\scshape{}M.~R{\o}rdam\egroup{}, The stable and the real rank of
  {$\mathcal{Z}$}-absorbing \ca{s},  \emph{Internat. J. Math.} \textbf{15}
  (2004), 1065--1084.

\bibitem[RW10]{RorWin10ZRevisited}
\bgroup\scshape{}M.~R{\o}rdam\egroup{} and \bgroup\scshape{}W.~Winter\egroup{},
  The {J}iang-{S}u algebra revisited,  \emph{J. Reine Angew. Math.}
  \textbf{642} (2010), 129--155.

\bibitem[Thi17]{Thi17:CuLectureNotes}
\bgroup\scshape{}H.~Thiel\egroup{}, The {C}untz semigroup, lecture notes
  available at hannesthiel.org/publications, 2017.

\bibitem[Thi20a]{Thi20arX:grSubhom}
\bgroup\scshape{}H.~Thiel\egroup{}, The generator rank of subhomogeneous
  \ca{s}, Canad. J. Math. (to appear), DOI: 10.4153/S0008414X22000268, preprint
  (arXiv:2006.03624 [math.OA]), 2020.

\bibitem[Thi20b]{Thi20RksOps}
\bgroup\scshape{}H.~Thiel\egroup{}, Ranks of operators in simple \ca{s} with
  stable rank one,  \emph{Comm. Math. Phys.} \textbf{377} (2020), 37--76.

\bibitem[TV21a]{ThiVil21DimCu2}
\bgroup\scshape{}H.~Thiel\egroup{} and \bgroup\scshape{}E.~Vilalta\egroup{},
  Covering dimension of {C}untz semigroups {II},  \emph{Internat. J. Math.}
  \textbf{32} (2021), 27~p., Paper No. 2150100.

\bibitem[TV21b]{ThiVil21arX:NowhereScattered}
\bgroup\scshape{}H.~Thiel\egroup{} and \bgroup\scshape{}E.~Vilalta\egroup{},
  Nowhere scattered \ca{s}, J. Noncommut. Geom. (to appear), preprint
  (arXiv:2112.09877 [math.OA]), 2021.

\bibitem[TV22a]{ThiVil22DimCu}
\bgroup\scshape{}H.~Thiel\egroup{} and \bgroup\scshape{}E.~Vilalta\egroup{},
  Covering dimension of {C}untz semigroups,  \emph{Adv. Math.} \textbf{394}
  (2022), 44~p., Article No.~108016.

\bibitem[TV22b]{ThiVil22pre:Soft}
\bgroup\scshape{}H.~Thiel\egroup{} and \bgroup\scshape{}E.~Vilalta\egroup{},
  Soft \ca{s}, in preparation, 2022.

\bibitem[TT15]{TikTom15CuSgpNonunital}
\bgroup\scshape{}A.~P. Tikuisis\egroup{} and \bgroup\scshape{}A.~S.
  Toms\egroup{}, On the structure of {C}untz semigroups in (possibly) nonunital
  \ca{s},  \emph{Canad. Math. Bull.} \textbf{58} (2015), 402--414.

\bibitem[Vil21]{Vil21arX:CommCuAI}
\bgroup\scshape{}E.~Vilalta\egroup{}, The {C}untz semigroup of unital
  commutative {AI}-algebras, Canad. J. Math. (to appear), DOI:
  10.4153/S0008414X22000542, preprint (arXiv:2104.08165 [math.OA]), 2021.

\end{thebibliography}

\providecommand{\href}[2]{#2}

\end{document}